\newcommand{\scrp}{\mathscr{P}}
\newcommand{\sprmA}{\delta} 
\newtheorem{theorem}{Theorem}[section]
\newtheorem{proposition}[theorem]{Proposition}
\newtheorem{lemma}[theorem]{Lemma}
\newtheorem{corollary}[theorem]{Corollary}
\newtheorem{definition}[theorem]{Definition}
\newtheorem{remark}[theorem]{Remark}
\newtheorem*{theorem*}{Theorem}
\newcommand{\supp}{\operatorname{supp}} 
\newcommand{\divv}{\operatorname{div}}
\newcommand{\diag}{\operatorname{diag}}
\newcommand{\dist}{\operatorname{dist}^*} 
\newcommand{\Dist}{\operatorname{Dist}^*} 
\newcommand{\oDist}{\operatorname{Dist}}
\newcommand{\rel}{\mathrm{rel}}
\newcommand{\loc}{\mathrm{loc}}
\newcommand{\Om}{\Omega}
\newcommand{\ve}{\varepsilon}
\newcommand{\En}{G}
\newcommand{\en}{g}
\newcommand{\mOns}{\varpi}
\newcommand{\zin}{z^\mathrm{in}} 
\newcommand{\uin}{u^\mathrm{in}}
\newcommand{\cin}{c^\mathrm{in}}
\newcommand{\Mzero}{\hyperlink{Mzero}{\bf(M0)}}
\newcommand{\Mone}{\hyperlink{Mone}{\bf(M1)}}
\newcommand{\secentropytools}{Section~2}
\newcommand{\fluxbound}{Lemma~2.3}
\newcommand{\EPestimate}{Lemma~2.1}
\newcommand{\thmWeak}{Theorem~1.2}
\newcommand{\thmRen}{Theorem~1.8}
\newcommand{\ustrong}{Lemma~6.1}
\newcommand{\lowerBdS}{(2.9)}
\newcommand{\iuu}{p_{ij}}
\newcommand{\iuv}{p_{0j}}
\newcommand{\iru}{q_{ij}}
\newcommand{\ol}{\overline}
\numberwithin{equation}{section}
\begin{document}

\title[Weak-strong uniqueness for ERDS]{Weak-strong uniqueness for\\ energy-reaction-diffusion systems} 

\begin{abstract}
	We establish weak-strong uniqueness and stability properties of 
	renormalised solutions to a class of energy-reaction-diffusion systems. 
	The systems considered are motivated by thermodynamically consistent models, and their formal entropy structure allows us to use as a key tool a suitably adjusted relative entropy method. 
	The weak-strong uniqueness principle holds for \textit{dissipative} renormalised solutions, which in addition to the renormalised formulation obey suitable dissipation inequalities consistent with previous existence results.
We treat general entropy-dissipating reactions without growth restrictions, and certain models with a non-integrable diffusive flux.
	
	 The results also apply
		to a class of (isoenergetic) reaction-cross-diffusion systems.
\end{abstract}

\author[K. Hopf]{Katharina Hopf}
\address{Katharina Hopf, Weierstrass Institute for Applied Analysis and Stochastics (WIAS),
	Mohrenstrasse 39, 10117 Berlin, Germany, 
	E-Mail: hopf@wias-berlin.de}

\keywords{Energy-reaction-diffusion systems, weak-strong uniqueness, entropy method, convexity method, dissipative renormalized solutions, cross diffusion.}

\subjclass[2020]{35A02, 35K51, 35K57, 35Q79}

\date{} 

\maketitle

\section{Introduction}\label{sec:intro}

It is well acknowledged that the evolution of a system of diffusing and reacting chemicals is influenced by the thermal state of the system.
\emph{Energy-reaction-diffusion systems} (ERDS) take into account this thermal dependency
by consistently coupling the evolution of the chemical concentrations $c=(c_1,\dots,c_n)$ to a heat-type equation for the \emph{internal energy density} $u$. Choosing the internal energy density as the thermal variable (as opposed to temperature for instance) has the advantage that the underlying physical entropy is jointly concave in the state variables $z=(u,c)$~\cite{Mielke_generic_2011,Mielke_gradstruc_2011}.

Recently, global existence of weak and renormalised solutions has been obtained for a class of thermodynamically consistent ERDS~\cite{FHKM_2020} taking the form (with~$z:=(u,c)$)
\begin{equation}\label{eq:erds.cu}\left\{
	\begin{array}{rlll}
		\partial_t u\!\!&=\nabla\cdot\big(A_{0j}(z)\nabla z_j\big), &
		t>0, x\in\Om,&\vspace{.1cm}
		\\\partial_t c_i\!\!&=\nabla\cdot\big(A_{ij}(z)\nabla z_j\big)+R_i(z),\qquad&
		t>0, x\in\Om,  \quad& i\in\{1,\dots,n\},\vspace{.1cm}
		\\0\!\!&=A_{ij}(z)\nabla z_j\cdot\nu, &t>0,  x\in\partial\Om, & i\in\{0,\dots,n\},
	\end{array}\right.
\end{equation}
see also the more explicit system~\eqref{eq:ex.erds}.
Eq.~\eqref{eq:erds.cu} is supplemented with an initial condition $(u,c)_{|t=0}=(\uin,\cin)$ for $x\in\Om$, where $\Om\subset\mathbb{R}^d$ is a bounded Lipschitz domain with outer unit normal $\nu$.
Note that we use the  summation convention omitting the summation symbol in repeatedly occurring indices (here $\sum_{j=0}^n$). The diffusion matrix $A(z)=(A_{ij}(z))_{i,j=0,\dots,n}$ and the reactions $(R_i(z))_{i=1,\dots,n}$ are obtained from an underlying formal gradient structure based on entropy functionals $H(z)=\int_\Om h(z)\,\dd x$ with convex densities $h(z)=h(u,c)$ taking the form
\begin{align}
	h(u,c_1,\dots,c_n) = -\sigma(u)+\sum_{i=1}^nb(c_i,w_i(u)).
\end{align}
Here, $\sigma$ denotes the thermal part when the concentrations $c_i$ are in their thermodynamic equilibrium $w_i=w_i(u)$, and $b(s,e):=e\lambda(s/e)$ with $\lambda$ denoting the  Boltzmann function, see Section~\ref{ssec:modelling} for details.
The absence of reactions in the $u$-component of~\eqref{eq:erds.cu} reflects the property of conservation of the total (internal) energy $\int_\Om u$. 

ERDS genuinely feature cross-diffusion effects, such as concentration flux driven by gradients of the internal energy density 
and energy flux due to concentration gradients, 
which are one of the main sources of difficulties in their analysis.
These phenomena are closely linked to the thermodynamic origin of ERDS, and are related to the \emph{Soret effect} and the \emph{Dufour effect}, well-known in physics, which describe concentration flux due to temperature gradients resp.\ heat flux driven by concentration gradients.

In the present manuscript, we aim to derive stability properties including a weak-strong uniqueness result for ERDS~\eqref{eq:erds.cu}  based on their thermodynamic structure.
This can be seen as an attempt to justify the weak solution concept in~\cite{FHKM_2020}.
The main contribution of~\cite{FHKM_2020} was to identify non-trivial classes of thermodynamically consistent models that allow for an existence theory of generalised solutions.
Interestingly, even in a cross-diffusion dominant regime and without physically restrictive growth conditions on the reactions, existence has been obtained in~\cite{FHKM_2020} based on the notion of renormalised solutions~\cite{Fischer_2015}. 
Our weak-strong uniqueness principle covers models involving various cross-diffusion phenomena (Soret effect, Dufour effect, cross diffusion between species),
and
applies in particular to a class of isoenergetic reaction-cross-diffusion systems, thus generalising~\cite{CJ_2019}. 
Weak-strong uniqueness is obtained from a weak stability estimate for a generalised distance involving as in~\cite{Fischer_2017} an adjusted relative entropy.
By suitably exploiting the thermodynamic structure of the system some of the technical issues arising in the proof of~\cite{CJ_2019} will be avoided. 
We also obtain an asymptotic stability result.

\subsection{Thermodynamic modelling}\label{ssec:modelling}
Let us now briefly specify the thermodynamic structure considered in the present manuscript. For more background on the modelling, we refer to~\cite{FHKM_2020,MM_2018,HHMM_2018,Mielke_2013}.
Models compatible with thermodynamics can be derived using the Onsager formalism in~\cite{Mielke_2013}. 
Here, the main ingredient is a triple $(Z,H,\mathbb{K})$ consisting of a state space $Z$, a driving  functional $H$, and a so-called Onsager operator $\mathbb{K}$.
Typically, $Z\subset X$ is a convex subset of a Banach space $X$, $H:Z\to\mathbb{R}\cup\{\infty\}$ a differentiable and convex functional on $Z$ (below usually referred to as entropy due to its correspondence to the negative of the physical entropy),  
while $\mathbb{K}$ can be seen as a generalised inverse Riemannian metric tensor on $Z$.  
More specifically, for every $z\in Z$, $\mathbb{K}(z)$ defines a symmetric and positive semi-definite  (unbounded) linear operator from $T_z^*Z$ to $T_zZ$. 
If $\mathbb{G}=\mathbb{K}^{-1}$ exists, the triple $(Z,H,\mathbb{G})$ forms a gradient system.
Then, motivated by the classical gradient flow equation $\mathbb{G}(z)\dot z=-DH(z)$, with $\dot z$ denoting the time derivative of $z=z(t)$, one considers the evolution law
$$\dot z = -\mathbb{K}(z)DH(z),$$ where here $DH$ denotes the Fr\'echet derivative of the functional $H$. An advantage of this Onsager form is that it facilitates the consistent coupling of different physical phenomena,
which can be realised by an additive decomposition of $\mathbb{K}$~\cite{Mielke_2013}.
Observe that, formally, the above structure encodes the following core entropy dissipation property
\begin{align}\label{eq:ed.a}
	\tfrac{\dd}{\dd t}H(z)=-\langle DH(z),\mathbb{K}(z)DH(z)\rangle \le0
\end{align}
along any solution curve $z=z(t)$ of the above law.
Conservation of the total energy $E(z),$ with $E$ denoting the energy functional on $Z$,  
can be guaranteed by imposing the condition $\mathbb{K}DE=\mathbb{K}^*DE\overset{!}{=}0$, which implies that
$\tfrac{\dd}{\dd t}E(z)=-\langle DE(z),\mathbb{K}(z)DH(z)\rangle = 0.$ 

In the context of ERDS, we consider, as introduced above, $z=(u,c)$ with $u$ the internal energy density and $c=(c_1,\dots,c_n)$ the vector of concentrations. We focus on entropies of the form $$H(z)=\int_\Om h(z)\,\dd x$$ with densities $$h(z)=h(u,c)=-\sigma(u)+\sum_{i=1}^nb(c_i,w_i(u)),$$
composed of a thermal part $\sigma(u)$ and a relative Boltzmann entropy $b(s,e)=e\lambda(s/e)$, where	
\begin{align}\label{eq:deflb}
	\lambda(r):= r\log(r)-r+1,
\end{align}
and with $w_i=w_i(u)$, $i=1,\dots, n$, denoting 
the thermodynamic equilibria of the concentrations $c_i$. The dependence of $w_i$ on the internal energy density $u$ results in a strong coupling of the system and is one of the main sources of difficulties in the analysis. It will be convenient to introduce the function $\hat\sigma(u)=\sigma(u)-\sum_{i=1}^nw_i(u)+n$ and rewrite $h(u,c)$ in the following more explicit form
\begin{align}\label{eq:S0}\tag{h1}
	h(u,c) 	=-\hat \sigma(u) + \sum_{i = 1}^n \Big( \lambda(c_i) - c_i \log w_i(u)\Big).
\end{align}
To simplify the exposition, we will impose the following concrete conditions on the coefficient functions (cf.~\cite{FHKM_2020}):
\begin{equation}
	\label{eq:h2}\tag{h2}\left.
	\begin{tabular}{@{}l@{}}
		$\hat\sigma \in C^2((0,\infty))$ strictly concave \& non-decreasing.
		\\	 $w_i\in C([0,\infty)) \cap C^2((0,\infty))$ concave \& non-decr.\ with $w_i(0)>0$ for all $i$. 
		\\ $\lim_{u\downarrow0}\hat\sigma'(u)=+\infty$, $\lim_{u\uparrow\infty}\hat\sigma'(u)=0$;\quad  $\sup_{u\in(0,1]}\hat\sigma''(u)<0$.
		\\	$\exists\beta\in(0,1)$ such that  $w_i(u)\lesssim 1+u^\beta$ for all $i\in\{1,\dots,n\}$.
	\end{tabular}
	\right\} 
\end{equation}
Typical choices are $\hat\sigma(u)=a\log(u)$ or $\hat\sigma(u)=au^\nu$ for some $\nu\in(0,1)$, $a>0$, and $w_i(u)=(b_{i,1}u+b_{i,0})^{\beta_i}$ or $w_i(u)=b_{i,1}u^{\beta_i}+b_{i,0}$ for $\beta_i\in(0,1),$ $b_{i,0}>0, b_{i,1}\ge0$.

\smallskip

\noindent Our weak-strong uniqueness principle does not rely, in an essential way, on this specific form of the entropy density.\footnote{See Sec.~\ref{sss:rds} for an example of a different entropy density that our technique can be adapted to.}  In fact, in the proof of our main theorem (Thm~\ref{thm:wkuniq}),  identity~\eqref{eq:S0} is only used to guarantee the coercivity properties in Proposition~\ref{prop:coerc.hrel}. The crucial point in the assumptions~\eqref{eq:h2} on the coefficient functions is that they ensure good convexity properties, and more specifically the locally uniform positive definiteness of the Hessian $D^2h$, which is essential for estimate~\eqref{eq:coercA} in Prop.~\ref{prop:coerc.hrel}. The monotonicity assumptions on $\hat\sigma$ and $w_i$ are relevant from the modelling point of view, since they ensure that $u\mapsto h(u,c)$ is non-increasing, so that temperature, which is given by $-\tfrac{1}{\partial_uh}$, is non-negative. 

As in~\cite{MM_2018,FHKM_2020} we are primarily interested in 	
Onsager operators $\mathbb{K}$ of the form
$$\mathbb{K}(z)\zeta=\mathbb{K}_{\mathrm{diff}}(z)\zeta+\mathbb{K}_{\mathrm{react}}(z)\zeta=-\nabla\cdot(\mathbb{M}(z)\nabla\zeta)+\mathbb{L}(z)\zeta,$$
where $\mathbb{M}(z),\mathbb{L}(z)\in\mathbb{R}^{(1+n)\times(1+n)}$ are  positive semi-definite symmetric matrices and where $\nabla=\nabla_x$ is the gradient with respect to $x\in\Om$. 
We will complement $\mathbb{K}$ with the no-flux boundary conditions $\mathbb{M}\nabla\zeta\cdot\nu=0$ on $\partial\Om$, where $\nu$ denotes the outer unit normal vector to $\partial\Om$.
Observing that $E(u,c)=\int_\Om u$ describes the total (internal) energy, the condition $\mathbb{K}DE\equiv0$, ensuring energy conservation, means that 
$\ker \mathbb{L}(z)\supseteq\mathrm{span}\{(1,0)^T\}$.
Thus, by the symmetry of $\mathbb{L}$, the zeroth component $R_0$ of  $R(z):=-\mathbb{L}(z)Dh(z)$ vanishes. Moreover, positive semi-definiteness of $\mathbb{L}$ implies the inequality 
\begin{align}\label{eq:R}
	D_ih(z)R_i(z)\le 0.
\end{align}
In this paper, the specific form of $\mathbb{L}(z)$ will not be relevant. Instead, we directly work with reactions $R(z)$ of the form
$$R(z):=(0,R_1(z),\dots, R_n(z))$$ 
satisfying~\eqref{eq:R}.

With $\mathbb{K}$ as above, the equation $\dot z =-\mathbb{K}(z)DH(z)$ can be written in the form~\eqref{eq:erds.cu} by choosing 
$$A(z):=\mathbb{M}(z)D^2h(z).$$  
In short, 
\begin{equation}\label{eq:erds}\tag{\textsc{erds}}
	\begin{array}{rlcl}
		\partial_t z&\!\!=\, \nabla\cdot\big(A(z)\nabla z\big)+R(z), &&\quad t>0,x\in\Om,\vspace{1mm}
		\\0&\!\!= \, A(z)\nabla z\cdot\nu,&&\quad t>0,x\in\partial\Om,
	\end{array}
\end{equation}
subject to an initial condition $z_{|t=0}=\zin$.
In the above setting, the entropy dissipation property~\eqref{eq:ed.a} takes the form
\begin{align}
	\frac{\dd}{\dd t}H(z)+\int_\Om\scrp(z)\,\dd x=\int_\Om D_ih(z)R_i(z)\,\dd x\le0,
\end{align}
where $\scrp(z):=\nabla D_ih(z)\cdot\mathbb{M}_{il}(z)\nabla D_lh(z)\ge0$, by the positive semi-definiteness of the mobility matrix $\mathbb{M}$, which will be assumed throughout.
Supposing, for instance, that $\scrp(z)\gtrsim\sum_{i=1}^n|\nabla \sqrt{c_i}|^2$ (as it can be proved for many of the models considered in~\cite{FHKM_2020}, see Section~\ref{ssec:examples} and Lemma~\ref{l:modelBounds}), 
and using conservation of $\int_\Om u$ together with suitable bounds on $H(z)$ (cf.\ Lemma~\ref{l:S}),
the entropy dissipation property provides a priori control of $\sum_{i=1}^n\|\nabla \sqrt{c_i}\|^2_{L^2_{t,x}}$.
Let us further note that  the fact that $R_0(z)$ vanishes gives to some extent a scalar-like structure to the $u$-component of~\eqref{eq:erds},
and if, for instance, $\mathbb{M}$ is chosen such that $A_{0j}=a\,\delta_{0j}$ for some function $a=a(z)\ge0$, any $L^p$-type energy $\tfrac{1}{p}\|u(t)\|_{L^p}^p$, $p\in(1,\infty)$, is formally non-increasing in time.

\subsection{Motivation and strategy}

Being able, for a given PDE, to identify concepts of solutions for which both existence and uniqueness can be established is a fundamental concern in modelling and analysis.
For \emph{scalar} equations, there are various tools to identify
frameworks allowing for the existence of a \textit{unique} solution, even in regimes of low regularity and with strong nonlinearities. 
One approach is based on the \enquote{doubling variables} technique first employed by Kru\v{z}kov~\cite{Kruzkov_1970} to entropy solutions of first-order equations, and extended by Carrillo~\cite{Carrillo_1999} to hyperbolic-parabolic-elliptic equations. The concept was adapted to situations where $L^\infty$ bounds are not available to 
give uniqueness in a class of renormalised solutions~\cite{BCW_2000,CW_1999}.
See also~\cite{AI_2012,Perthame_2002,AAO_2020} for more recent developments.
Let us also mention the Young measure approach to conservation laws going back to
Tartar~\cite{Tartar_1979} and DiPerna~\cite{DiPerna_1985}
who obtained, in the scalar case, uniqueness of solutions obeying an entropy inequality.  
For second order parabolic and elliptic equations the viscosity solution technique and associated comparison principles~\cite{Jensen_1988,CIL_1992} are powerful tools and the key to a variety of wellposedness results in geometric, highly nonlinear or degenerate settings, see e.g.~\cite{ES_1991,CGG_1991,CC_1995,CHR_2020}. 
Some extensions of the viscosity solution approach to systems are available for weakly coupled problems with a monotonicity condition~\cite{IK_1991}. Further uniqueness results applying to specific systems and typically in more regular situations include~\cite{Gajewski_1994,Juengel_2000,CJ_2018,PT_2019,GV_2019,BBEP_2020}. 

In general,  the case of \emph{strongly coupled} (parabolic) \emph{systems} tends to be much more difficult.
While under a parabolicity condition the existence of suitable generalised solutions (here referred to as~\enquote{weak} solutions) can often be established, positive uniqueness results in such general settings are rare.
It is therefore common, to relax the quest for uniqueness to the problem of whether weak solutions are uniquely determined  in situations where a sufficiently regular solution (a \enquote{strong} solution) happens to exist. In other words, one is interested in the question of whether such strong solutions are unique in a potentially much larger class of weak solutions. 
The question of weak-strong uniqueness is classical in fluid dynamics problems and goes back to Leray's fundamental work~\cite{Leray_1934}, where it was established for the incompressible Navier--Stokes equations.
We refer to the survey by Wiedemann~\cite{Wiedemann_2018} for more details and further references.
For recent advances on conditional uniqueness results for
dissipative measure-valued solutions to conservation laws, see~\cite{GKS_2020} and references therein.
Rather significant in the thermodynamics context is moreover the relative entropy technique employed  by DiPerna~\cite{DiPernaRonald_1979} and Dafermos~\cite{Dafermos_1979a} for hyperbolic conservation laws. 

Relative entropy methods are nowadays a standard tool to study weak stability properties of nonlinear systems endowed with a (convex) entropy structure.
Generally speaking (using the notation introduced in Sec.~\ref{ssec:modelling}), 
a relative entropy of the form
\begin{align}\label{eq:def.Hrel}
	H_\rel(z,\tilde z) = H(z) - \int_\Om D_ih(\tilde z)(z_i-\tilde z_i)\,\dd x - H(\tilde z)
\end{align}
is used to measure the distance between a weak solution $z$ and a strong solution $\tilde z$.
Observe that for convex entropies $H(z)$, the map $z\mapsto H_\rel(z,\tilde z)$ is a non-negative, convex functional vanishing in $z=\tilde z$.
The thermodynamic structure ensures that regular solutions automatically satisfy an entropy dissipation balance (cf.\ eq.~\eqref{eq:ed.a}). 
Physically relevant processes may, however, in general possess less regularity and here an entropy inequality is often added as an admissibility criterion for weak solutions.
The goal is then to obtain an upper bound on the time evolution of the relative entropy that implies stability of a regular flow on a finite time horizon among generalised solutions. 
This means that given a regular flow, any weak solution that is initially close (in relative entropy) will remain close for some time.

In the present paper, we pursue such a strategy in the context of ERDS.
Weak-strong uniqueness has recently been obtained for entropy-dissipating reaction-diffusion systems with a uniformly elliptic and bounded diagonal diffusion matrix~\cite{Fischer_2017}, where the main difficulty consists in a lack of control of the reaction rates. Extensions to a cross-diffusion system from population dynamics with weak cross diffusion can be found in~\cite{CJ_2019}. Both references are based on the relative entropy method,  but their arguments rely on the specific structure of the diffusion matrix of their systems. Here, we would like to present a more general strategy to deduce stability from an underlying thermodynamic structure. 

Given the strong coupling and lack of a priori bounds in $L^\infty$, there are several difficulties in our ERDS that require an adaptation of the classical relative entropy approach to weak-strong stability.
First, due to the lack of growth restrictions on the reactions and in some cases even the flux term (see the models in~\cite{FHKM_2020}), the evolution of the classical relative entropy 
used to measure the distance between a renormalised solution $z$ and a strong solution $\tilde z$, cannot be properly controlled. This is due to the term 
\begin{align}\label{eq:Hrel.class}
	-\int_\Om D_ih(\tilde z)\tfrac{\dd}{\dd t}z_i \,\dd x = 
	\int_\Om \nabla D_ih(\tilde z)\cdot (A_{ij}(z)\nabla z_j)\,\dd x 
	-\int_\Om D_ih(\tilde z)R_i(z) \,\dd x\quad
\end{align}
arising in the formal computation of the time derivative of $H_\rel(z,\tilde z)$. In fact,  the available a priori estimates do not ensure that $A_{ij}(z)\nabla z_j\in L^1(\Om)$ and $R_i(z)\in L^1(\Om)$  for a.e.\ time. At the same time, the corresponding integrands in~\eqref{eq:Hrel.class} do not have a sign, and there is no hope for the uncontrolled parts to cancel with some of the remaining terms appearing in $\tfrac{\dd}{\dd t}H_\rel(z,\tilde z)$.
It is therefore necessary to adjust the relative entropy $H_\rel(z,\tilde z)$.
This issue has already been encountered in~\cite{Fischer_2017};
it can be resolved by introducing a suitable smooth and compactly supported truncation function $\xi^*=\xi^*(z)$ with $\xi^*(z)=1$ if $\sum_{i=0}^n z_i\le E$ for some $E\gg \sum_{i=0}^n \tilde z_i$ (see Section~\ref{ssec:def.xi*} for details) in the formula for the relative entropy via
\begin{align}
	H_\rel^*(z,\tilde z) := H(z) - \int_\Om D_ih(\tilde z)(\xi^*(z)z_i-\tilde z_i)\,\dd x - H(\tilde z).
\end{align}
The relative entropy density adjusted in this fashion allows to remove the issue 
pointed out above. (Strictly speaking, in the term $D_0h(\tilde z)(z_0-\tilde z_0)$ the truncation function is not needed in the models considered in this paper, and for other applications it may be helpful to use a different choice such as $D_ih(\tilde z)(\xi^*_i(z)z_i-\tilde z_i)$ with $\xi^*_i(z)\equiv1$ for $i=0$, or versions thereof.)

A second difficulty arising in the case of ERDS is the inherent coupling between concentrations and energy density, which manifests itself in the circumstance that the entropy density cannot be additively decomposed into terms depending only on an individual component $z_i$. 
This in turn leads to a non-diagonal diffusion matrix $A(z)$ and renders estimating the evolution of  $H^*_\rel(z,\tilde z)$ substantially more delicate than in the diagonal case. 
One of the main contributions of this manuscript is to show that such estimates can be achieved, with relatively little technical effort, by carefully exploiting the entropy structure.

The energy component $u$ plays a distinguished role in ERDS that has to be taken advantage of when interested in a general analysis. At a technical level, the physical constraint of the convex function $h(u,c)$ being non-increasing in $u$ (to ensure a non-negative temperature) restricts the range of relevant functions $\hat\sigma$ to sublinearities such as $\hat\sigma(u)=u^\nu$ for some $\nu\in[0,1)$ (with $\nu=0$ corresponding to $\log$).  
Unless $\hat\sigma (u)$ has close to linear growth for large values of $u$,
even the possibility of an existence theory solely based on the entropy estimate is questionable in general dimensions.
We are interested in covering more degenerate choices of $\hat\sigma$, and therefore cannot purely rely on the (adjusted) relative entropy  to measure the distance of a weak to the strong solution.
Instead, we exploit the absence of source terms in the $u$-component of the evolution system, which allows to give an extra, scalar-like  structure to the evolution law for $u$. Here, we content ourselves with the arguably simplest choice of an $L^2$-structure, meaning that we consider weighted generalised distances of the form
\begin{align}
	\Dist_\alpha(z,\tilde z) =  H_\rel^*(z,\tilde z)+\tfrac{\alpha}{2}\|u-\tilde u\|_{L^2(\Om)}^2,
	\quad \alpha\in(0,\infty).
\end{align}
This is consistent with the approach in~\cite{FHKM_2020} and allows us in particular to show the weak-strong uniqueness property for the potentially pathological solutions constructed in ~\cite[\thmRen]{FHKM_2020}, where  a cross-diffusion dominant regime was considered 
with  gradients of the internal energy density inducing a (possibly) non-integrable
concentration flux.
Furthermore, by exploiting the existence of such an additional quantity that up to some error term is dissipated along the flow, we can relax the conditions on the entropy functional in~\cite{MM_2018} required for proving exponential convergence to equilibrium.  

\subsection{Technique}\label{ssec:technique}
Here, we briefly outline, at a formal level, the main points of our argument showing a weak-strong stability estimate of the form 
\begin{align}\label{eq:dist.ineq}
	\tfrac{\dd}{\dd t}\Dist_\alpha(z,\tilde z)
	\lesssim_{T,\alpha,\xi^*} \Dist_\alpha(z,\tilde z),
\end{align}
on any finite time horizon $(0,T)$, $T<T^*$, where $z$ is assumed to be a \enquote{weak} (renormalised) solution and $\tilde z$ a \enquote{strong} solution of~\eqref{eq:erds} in $(0,T^*)\times\Om$ for some $T^*\in(0,\infty]$.

First, letting $\dist_\alpha(z,\tilde z)=h_\rel^*(z,\tilde z)+\tfrac\alpha2|u-\tilde u|^2,$
where 
\begin{align}\label{eq:222}
	h_\rel^*(z,\tilde z) = h(z)-D_ih(\tilde z)(\xi^*(z)z_i-\tilde z_i)-h(\tilde z),
\end{align}
we can write $\Dist_\alpha(z,\tilde z)=\int_\Om \dist_\alpha(z,\tilde z)\,\dd x$.
We further recall that the function $\xi^*=\xi^*(z)$ will be chosen such that $\xi^*(z)=1$ if $\sum_{i=0}^n z_i\le E$ for an auxiliary parameter $E\gg \sum_{i=0}^n \tilde z_i$. Then, if $E=E(\tilde z,\min\{\alpha,1\})$ is chosen large enough, $\dist_\alpha(z,\tilde z)\ge0$ for all $z\in[0,\infty)^{1+n}$ with equality if and only if $z=\tilde z$.

To sketch the argument leading to~\eqref{eq:dist.ineq}, let us for simplicity only consider the case where $A_{0j}(z)=a(z)\delta_{0j}$ with $a\gtrsim1$. In this case, it will suffice to take $\alpha\ge1$. We now assume that $z$ and $\tilde z$ are sufficiently regular solutions of~\eqref{eq:erds} (with $A=\mathbb{M}D^2h$, $\mathbb{M}\ge0$, $D_ihR_i\le0$), where the strong solution $\tilde z$ be such that $\|\tilde z\|_{C^{0,1}([0,T]\times\bar\Om)}<\infty$  and
$\inf_{(0,T)\times\Om}\tilde z_i>0$ for all $i\in\{0,\dots,n\}$ and all $T<T^*$. 
To estimate the time evolution of $\Dist_\alpha(z,\tilde z)$, one formally computes
\begin{align}
	\tfrac{\dd}{\dd t}\Dist_\alpha(z,\tilde z)= \int_\Om \;\rho^{(h)}\;\dd x+\alpha \int_\Om \;\rho^{(\en)}\;\dd x, 
\end{align}	
where (see Lemma~\ref{l:evol.hrel})
\begin{align}\label{eq:rho.h.v1}
	\rho^{(h)} &:=-\nabla D_{i}h(z)\cdot\mathbb{M}_{il}(z)\nabla D_{l}h(z)
	\\&\qquad + \nabla (D_i(\xi^*(z)z_j)D_jh(\tilde z) )\cdot \mathbb{M}_{il}(z)\nabla D_lh(z)
	\\&\qquad + \nabla\Big(D_{ij}h(\tilde z)(\xi^*(z)z_j-\tilde z_j)\Big)\cdot 
	\mathbb{M}_{il}(\tilde z)\nabla D_lh(\tilde z)
	\\&\qquad- D_{ij}h(\tilde z)(\xi^*(z)z_j-{\tilde z}_j) R_i(\tilde z)
	\\&\qquad+ (D_ih(z)-D_jh(\tilde z)D_i(\xi^*(z)z_j)) R_i(z),
\end{align}
and 
\begin{align}
	\rho^{(\en)}&:= -a(z)|\nabla u|^2-a(\tilde z)|\nabla \tilde u|^2+a(z)\nabla u\cdot\nabla \tilde u+a(\tilde z)\nabla u\cdot\nabla \tilde u
	\\&=-a(z)|\nabla u-\nabla \tilde u|^2-(a(z)-a(\tilde z))(\nabla u-\nabla \tilde u)\cdot\nabla \tilde u.\label{eq:rho.u.A+}
\end{align}
Thus, to show~\eqref{eq:dist.ineq} it suffices to obtain a pointwise upper bound of the form  
\begin{align}\label{eq:ptw.rho}
	\rho_\alpha:=\rho^{(h)}+\alpha\rho^{(\en)}\lesssim\dist_\alpha(z,\tilde z).
\end{align}
This pointwise estimate will be proved by distinguishing  four cases determined by the value of the weak solution $z=z(t,x)\in[0,\infty)^{1+n}$ at any given point $(t,x)$. This case distinction is motivated by the following observations:

First, if $z\in[0,\infty)^{1+n}$ with $\sum_{i=0}^n z_i\le E$ for $E=E(\tilde z)$ large enough, we want $h_\rel^*(z,\tilde z)$ to coincide with the classical relative entropy density $h_\rel(z,\tilde z)=h(z)-D_ih(\tilde z)(z_i-\tilde z_i)-h(\tilde z)$ to be able to use its distance-like properties. This will be ensured by choosing  $\xi^*(z)=1$ with $D^k\xi^*(z)=0$ for all $k\in\mathbb{N}^+$ whenever $\sum_{i=0}^n z_i\le E$  (cf.\ the definition of $\xi^*$ in Sec.~\ref{ssec:def.xi*}).
Thus, if  $|z-\tilde z|$ is close to zero, the strict convexity, non-negativity and vanishing in $z=\tilde z$ of $\dist_\alpha(\cdot,\tilde z)$ imply that $\dist_\alpha(z,\tilde z)\sim_{\|\tilde z\|_{L^\infty},E} |z-\tilde z|^2$ for $|z|\le E$.
In this case, to show that $\rho_\alpha$ is quadratically small in $|z-\tilde z|$, we 
write (using $\sum_iz_i\le E$)
\begin{align}
	\rho^{(h)}&=-\nabla (D_{i}h(z)-D_ih(\tilde z))\cdot\mathbb{M}_{il}(z)\nabla (D_{l}h(z)-D_lh(\tilde z))
	\\&\qquad -\nabla (D_{i}h(z)-D_ih(\tilde z))\cdot(\mathbb{M}_{il}(z)-\mathbb{M}_{il}(\tilde z))\nabla D_lh(\tilde z)
	\\&\qquad - \nabla\big(D_{i}h(z)-D_ih(\tilde z)-D_{ij}h(\tilde z)(z_j-\tilde z_j)\big)\cdot 
	\mathbb{M}_{il}(\tilde z)\nabla D_lh(\tilde z)
	\\&\qquad+\big(D_ih(z)-D_ih(\tilde z)-D_{ij}h(\tilde z)(z_j-{\tilde z}_j)\big) R_i(\tilde z)
	\\&\qquad+ (D_ih(z)-D_ih(\tilde z))(R_i(z)-R_i(\tilde z)),
\end{align}
see case $\mathcal{A}_+$ in the proof of Theorem~\ref{thm:wkuniq} for details.
In order to deal with the terms involving a gradient of $z$ that appear in the second and the third term on the RHS, one would like to exploit the non-positive first term on the RHS. 
Typically (such as in the ERDS models considered in~\cite{FHKM_2020}),  the submatrix $(\mathbb{M}_{il}(z))_{i,l=1,\dots,n}$ will, however, degenerate as soon as $c_i\searrow0$ for some $i\in\{1,\dots,n\}$. Yet if
$\min\{z_1,\dots, z_n\}\ge\iota$ for some $\iota>0$, then it is possible to assume that $(\mathbb{M}_{il}(z))_{i,l=1,\dots,n}\gtrsim_\iota \mathbb{I}_n$.
This, combined with the second line in~\eqref{eq:rho.u.A+} and suitable smoothness assumptions on $\mathbb{M}$ and $h$, will allow us to infer that 
\begin{align}
	\rho_\alpha\lesssim_{E,\iota,\tilde z} |z-\tilde z|^2 
\end{align}
whenever $z(t,x)\in \mathcal{A}_+:=\{z'\in[0,\infty)^{1+n}:\min\{z_0',\dots, z_n'\}\ge\iota,\;\sum_{i=0}^nz_i'\le E\}$ for some $\iota>0$ and sufficiently large $E\ge 1$.

To deal with the case $z(t,x)\in \mathcal{A}_0:=\{z':\min\{z_0',\dots, z_n'\}<\iota,\;\sum_{i=0}^nz_i'\le E\}$, we fix $\iota=\iota(\tilde z)>0$ small enough such that $\inf\tilde z_i\ge2\iota$ for all $i=0,\dots, n$. This implies that $|z-\tilde z|\ge\iota$ whenever $z\in \mathcal{A}_0$. Thus, since $|z-\tilde z|\gtrsim1$ is bounded away from zero,  so is $\dist_\alpha(z,\tilde z)$ (see Prop.~\ref{prop:coerc.hrel}). 
It then suffices to have suitable coercivity estimates on $\scrp(z)$ that allow to absorb those terms on the RHS of~\eqref{eq:rho.h.v1} that involve gradients of $z$ and do not have a sign by the first term on the RHS, which equals~$-\scrp(z)$. (Such coercivity estimates are typically already needed in the construction of solutions.)

It remains to consider the case $\sum_iz_i>E$, where $E$ will be chosen large enough, in particular such that $E\ge\|\sum_i\tilde z_i\|_{L^\infty((0,T)\times\Om)}+1$ and $E\ge E_0$ with $E_0$ being such that $\dist_\alpha(z,\tilde z)\gtrsim \sum_i{c_i}\log_+(c_i)+u^2+1$ for all $\tilde E\ge E_0$. 
If $z\not\in\supp\xi^*$, $\rho^{(h)}$ takes a simple form. The entropy dissipating property of diffusion and reactions, and the Lipschitz regularity of the strong solution $\tilde z$ are sufficient to deduce~\eqref{eq:ptw.rho} in this case (referred to as $z\in\mathcal{C}$). 

The intermediate case (below referred to as case $z\in\mathcal{B}$), where $0<\xi^*<1$, is  more delicate as can be seen in formula~\eqref{eq:rho.h.v1}, where the second term on the RHS involves summands that are quadratic in the gradient of the renormalised solution. Here, we take advantage of an idea by Fischer~\cite{Fischer_2017}. 
In order to be able to absorb this bad term by the first term on the RHS of~\eqref{eq:rho.h.v1}, 
another scale $E'\gg E$ is introduced (for convenience we choose $E'=E^N$, as in~\cite{Fischer_2017}), and $\xi^*$ will be taken such that $\xi^*(z)=0$ if and only if $\sum_{i=0}^nz_i\ge E'$, $\xi^*(z)=1$ if and only if $\sum_{i=0}^nz_i\le E$. 
On these scales, $\xi^*$ can be chosen in such a way that derivatives of $\xi^*$ have an additional decay property enabling
the desired absorption if $\tfrac{E'}{E}$ is large enough.
Finding $\xi^*$ such that absorption is possible is non-trivial and relies on a logarithmic gain.

\subsection{Outline}
The rest of the article is structured as follows. In Section~\ref{sec:results} we introduce relevant definitions and hypotheses, and formulate our main results:
a weak-strong uniqueness principle for dissipative renormalised solutions to~\eqref{eq:erds} (see Thm~\ref{thm:wkuniq}), 
a strong entropy dissipation inequality as used in the proof of Theorem~\ref{thm:wkuniq} (see Prop.~\ref{prop:edin}), and a result on the exponential convergence to equilibrium (see Prop.~\ref{prop:expconv}).
In Section~\ref{ssec:examples} we present selected examples that our main results apply to, including the class of ERDS considered in~\cite{FHKM_2020} as well as a class of models with cross diffusion between species.

The weak-strong uniqueness principle is proved in Section~\ref{sec:proofs}, starting with several auxiliary results with the actual proof of Theorem~\ref{thm:wkuniq} being given in Section~\ref{ssec:stab.ineq}.
In Sections~\ref{sec:edin} and~\ref{sec:exp.conv} respectively, we establish
the entropy dissipation inequality~\eqref{eq:edin} and  the exponential convergence to equilibrium for a specific ERDS, below referred to as Model~\Mzero{}.
Some auxiliary results are gathered in Appendix~\ref{sec:app}.
In Appendix~\ref{ssec:ED.ren} we explain how to derive inequality~\eqref{eq:edin} for 
the renormalised solutions constructed in~\cite{FHKM_2020} for a model with non-integrable diffusive flux.

\subsection{Notations}\label{ssec:notations}\small
\begin{itemize}
	\item \emph{Summation convention}: any unspecified summations of the form $\sum_i$ are to be understood as $\sum_{i=0}^n$.  For brevity, we use a summation convention for summing over the system's components $i=0,\dots,n$ in case of repeatedly occurring indices while omitting the summation symbol. In ambiguous situations the summation symbol will be used. Summations restricted to $i=1,\dots,n$ (excluding the $u$-component) will always be made explicit.
	In our convention, summation over repeated indices has priority over other mathematical operations such as	integration or taking the absolute value. For instance, by default we let $|A_{ik}(z)\nabla z_k|=|\sum_{k=0}^nA_{ik}(z)\nabla z_k|$.
	\item For technical concerns regarding the notation $\mathbb{M}_{il}(z)\nabla D_lh(z)$, we refer to Remark~\ref{rem:con.M}.
	\item We denote by $R=(0,R_1,\dots, R_n)^T$ the vectors of reaction rates.
	\item Given $T^*\in(0,\infty]$, we  let $I=[0,T^*)$ denote the time horizon of interest. For $T>0$, we abbreviate $\Om_T:=(0,T)\times\Om$.
	\item For functions $f=f(z_0,\dots,z_n)$ we let $D_if=\tfrac{\partial f}{\partial z_i}$ and
	$D_{ij}f=\tfrac{\partial^2f}{\partial z_i\partial z_j}$ for $i,j\in\{0,\dots,n\}$. 
	\item In estimates, $C<\infty$ typically denotes a finite (sufficiently large) constant that may change from line to line, while we often use $\epsilon>0$ to denote a (sufficiently small) positive constant.
	\item For quantities $A,B\ge0$ we write $A\lesssim B$ if there exists a fixed constant $C<\infty$ such that $A\le CB$. The notation $A\gtrsim B$ means $B\lesssim A$, while $A\sim B$ is to be understood as both $A\lesssim B$ and $A\gtrsim B$ being satisfied. In order to indicate dependencies of the constant $C=C(p_1,\dots,p_k)$ on certain parameters $p_1,\dots,p_k$, we write $A\lesssim_{p_1,\dots,p_k} B$, and analogously for $\gtrsim$ and $\sim$.
	\item Any dependence of constants and estimates on the regular solution $\tilde z\in C^{0,1}$ will usually not be explicitly indicated.
	\item We let $\min(z):=\min\{z_0,z_1,\dots,z_n\}$ for $z=(z_0,\dots, z_n)\in[0,\infty)^{1+n}$.
	\item By default, $|\cdot|$ denotes the Euclidean norm, e.g.\ $|z|=(\sum_i|z_i|^2)^\frac{1}{2}$
	\item $|z|_1=\sum_{i=0}^nz_i$ and $|c|_1=\sum_{i=1}^nc_i$.
	\item For time-dependent integral functionals $\int_\Om f((t,x),z(t,x))\,\dd x$, where $z=z(t,x)$ denotes a \enquote{weak} solution of~\eqref{eq:erds} taking in a suitable sense the data $\zin$, 	
	we use the convention
	$$\eval{\int_\Om f((t,x),z(t,x))\,\dd x}_{t=0}^{t=T}:=\int_\Om f((T,x),z(T,x))\,\dd x-\int_\Om f((0,x),\zin(x))\,\dd x$$
	provided the terms on the RHS are well-defined.
	\item For an open set $U\subset\mathbb{R}^N$, $C^k(U)$ denotes the space of continuous functions on $U$ that are continuously differentiable up to order $k\in\mathbb{N}$. By $C^{k,\nu}(U)=C^{k,\nu}_\loc(U)$, we denote the space of functions in $C^k(U)$, whose $k$-th derivative is $\nu$-H\"older continuous for some $\nu\in(0,1]$ on compact subsets $K\subseteq U$. (We use the symbol $C^{k,\nu}_\loc(U)$ for clarity's sake.)
	\item For $\Om$ bounded, we let $L\log L(\Om):=\{f\in L^1(\Om):f\ge0\text{ a.e.\ and }\int_\Om f\log f\,\dd x<\infty\}$. 
	\item The abbreviation `hp.' stands for hypothesis.
\end{itemize}
\normalsize

\section{Main results}\label{sec:results}

\subsection{Assumptions}\label{ssec:ass}
Throughout these notes, we let $d\ge1$ and $\Omega\subset\mathbb{R}^d$ be a  bounded Lipschitz domain with $|\Om|=1$. We further let $T^*\in(0,\infty]$ and $I=[0,T^*)$.

To prepare for stating our main result, the weak-strong uniqueness principle (Theorem~\ref{thm:wkuniq}), we gather the following conditions.
\begin{enumerate}[label=\textup{(A$\arabic*$)}]
	\item\label{it:hC3} Entropy: $h\in C^4((0,\infty)^{1+n})$ is of the form~\eqref{eq:S0}, where $\hat\sigma$ and $w_i$ are supposed to satisfy~\eqref{eq:h2}.
\end{enumerate}
\begin{enumerate}[resume,label=\textup{(A$\arabic*$)},ref=\textup{A$\arabic*$}]
	\item\label{it:react} Reactions: 
	$R=(0,R_1,\dots, R_n)\in C([0,\infty)^{1+n})^{1+n}$
	\begin{enumerate}[label=\textup{(\roman*)},ref=\textup{(\ref*{it:react}.\roman*)}]
		\item\label{eq:hp.Rdiss} satisfy $\sum_{i=1}^nD_ih(z)R_i(z)\le 0$ in $(0,\infty)^{1+n}$
		\item\label{it:R.locLip} are locally Lipschitz continuous in $(0,\infty)^{1+n}$.
	\end{enumerate}
\end{enumerate}
\begin{enumerate}[resume,label=\textup{(A$\arabic*$)}]
	\item\label{eq:hp.Mnondeg}  Mobility matrix: 
	$\mathbb{M}\in \big(C^{0,1}_\loc((0,\infty)^{1+n})\cap  C([0,\infty)^{1+n})\big)^{(1+n)\times(1+n)}$ and there exist non-negative functions  $m,a\in C^{0,1}_\loc((0,\infty)^{1+n})$ and $\mOns\in\{0,1\}$ with $0\le m\lesssim\mOns$ and $a\gtrsim 1$ such that 
	\begin{align}\label{eq:hp.ueq1}\tag{A3.a}
		\mathbb{M}_{0l}=\mathbb{\widetilde M}_{0l}+\delta_{0l}m
		\quad \text{ for }l=0,\dots,n,
	\end{align}
	for suitable $\mathbb{\widetilde M}_{0l}$ satisfying 
	$\sum_{l=0}^n\mathbb{\widetilde M}_{0l}D_{lj}h = \delta_{0j}a.$
	
	Moreover, for all $z\in[0,\infty)^{1+n}$ with $\min_iz_i\ge\iota$ for some $\iota>0$
	there exists $\epsilon(\iota)>0$ such that
	\begin{align}\label{eq:Mnd}\tag{A3.b}
		\mathbb{M}(z)\ge\diag(m(z),0,\dots,0)+\epsilon(\iota)\diag(0,1,\dots,1).
	\end{align}
	By continuity, when $\iota=0$,~\eqref{eq:Mnd} holds true with $\epsilon(\iota)=0$.
\end{enumerate}
Using our standard notation $A(z)=\mathbb{M}(z)D^2h(z)$, hp.~\eqref{eq:hp.ueq1} implies that, formally,
\begin{align}
	\label{eq:hp.ueq}\tag{A3.c}
	\sum_{j=0}^nA_{0j}(z)\nabla z_j=\sum_{l=0}^n\mathbb{M}_{0l}(z)\nabla D_lh(z)=a(z)\nabla u+m(z)\nabla D_0h(z).
\end{align}

We further need certain bounds on the flux and the concentration gradients in terms of the entropy dissipation. For this purpose we define for non-negative functions $z_j\in L^1_\loc(I,L^1(\Om))$ such that $\nabla (z_j^s)\in L^2_\loc(I;L^2(\Om))$ for some $s\in\{\tfrac{1}{2},1\}$ for each $j\in\{0,\dots, n\}$, the quantity  
\begin{align}
	\scrp(z):&=\nabla z: (D^2h(z)A(z)\nabla z)
	\\	&=\nabla D_ih(z)\cdot (\mathbb{M}_{il}(z)\nabla D_lh(z)),
\end{align}
where the second equality is to be understood in a formal sense, see Remark~\ref{rem:con.M}.
By the positive semi-definiteness of $\mathbb{M}$ imposed by hp.~\eqref{eq:Mnd}, we have $\scrp(z)\ge0$ for any such $z$, and more specifically, $\scrp(z)\ge m(z)|\nabla D_0h(z)|^2$.

\begin{enumerate}[resume,label=\textup{(A$\arabic*$)}]
	\item\label{hp:trunc} For all $K\ge1$ 
	\begin{align}\tag{A4.a}
		\chi_{\{|z|\le K\}}|\nabla c|&\lesssim_K\sqrt{\scrp(z)},\label{eq:hp.dztrunc.P}
		\\\chi_{\{|z|\le K\}}|\sum_{j=0}^nA_{ij}(z)\nabla z_j|&\lesssim_K\sqrt{\scrp(z)}\label{eq:hp.fluxtrunc.P}\tag{A4.b}
	\end{align}
	for all $i=0,\dots,n$.
	\item\label{eq:hp.A00} For $a(z)$ as in~\ref{eq:hp.Mnondeg}, 
	\begin{align}
		|a(z)\nabla u|\lesssim (1+u)\sqrt{\scrp(z)}.
	\end{align}
\end{enumerate}
Additionally, we often impose the following bound:
\begin{enumerate}[resume,label=\textup{(A$\arabic*$)}]
	\item \label{hp:grad.flux.control} For all $0\le i\le n$ 
	$$\chi_{\{|z|_1\ge1\}}|\nabla z||\sum_{j=0}^nA_{ij}(z)\nabla z_j|\lesssim |z|\scrp(z)+|z||\nabla u|^2.$$
\end{enumerate}
If~\ref{hp:grad.flux.control} is not satisfied, we have to assume that $\mOns=0$ in~\ref{eq:hp.Mnondeg} together with the condition:
\begin{enumerate}[resume,label=\textup{(A$\arabic*$')}]\addtocounter{enumi}{-1}
	\item \label{hp:trunc.grad.flux} For all $0\le i\le n$ and all $\underline u\in(0,1]$
	$$\chi_{\{u\ge\underline u\}}\chi_{\{|z|_1\ge1\}}|\nabla z||\sum_{j=0}^nA_{ij}(z)\nabla z_j|\lesssim |z|\scrp(z)+C(|z|, \underline{u})|\nabla u|^2.$$
\end{enumerate}
Let us observe that \ref{hp:grad.flux.control}$\implies$\ref{hp:trunc.grad.flux}.

A selection of relevant examples fulfilling the above hypotheses is provided in Section~\ref{ssec:examples}.

\subsection{Definitions and Results}
Throughout this text, we write $A(z):=\mathbb{M}(z)D^2h(z)$, where $h$ takes the form~\eqref{eq:S0},~\eqref{eq:h2}. We further recall our summation convention (see Notations~\ref{ssec:notations}).

\begin{definition}[Renormalised solution]\label{def:renorm}Let $I=[0,T^*)$ and suppose that the vector-valued function $z=(u,c_1,\dots,c_n)$ has non-negative components $z_i\ge0$ satisfying
	$\sqrt{z_i}\in L^2_\loc(I;H^1(\Om))$ or $z_i\in L^2_\loc(I;H^1(\Om))$ for all $i=0,\dots, n$.
	Further suppose that  for all $E\ge1$
	\begin{equation}\label{eq:zreg}
		\chi_{\{|z|\le E\}}A_{ik}(z)\nabla z_k\in L^2_\loc(I;L^2(\Omega)),
	\end{equation}
	for every $i\in\{0,\dots,n\}$.
	
	We call such $z$ a \emph{renormalised solution} of the energy-reaction-diffusion system~\eqref{eq:erds} in  $\Om_{T^*}:=(0,T^*)\times\Om$ with initial data $\zin$
	if for all $\xi\in C^\infty(\mathbb{R}_{\ge0}^{1+n})$ with compactly supported derivative $D\xi$, all $\psi\in C^\infty(I\times\bar\Omega)$ and almost all $T\in(0,T^*)$
	\begin{equation}\label{eq:118}
		\begin{split}
			&\int_\Om\xi(z(T,\cdot))\psi(T,\cdot)\,\dd x-\int_\Om\xi(\zin)\psi(0,\cdot)\,\dd x-\int_0^T\!\!\int_\Om\xi(z)\partial_t\psi\,\dd x\dd t
			\\&\qquad=
			-\int_0^T\!\!\int_\Om D_{ij}\xi(z)A_{ik}(z)\nabla z_k\cdot\nabla z_j\psi \,\dd x\dd t 
			\\&\qquad\quad -\int_0^T\!\!\int_\Om D_i\xi(z)A_{ik}(z)\nabla z_k\cdot\nabla\psi\,\dd x\dd t
			+\int_0^T\!\!\int_\Om D_i\xi(z)R_i(z)\psi\,\dd x\dd t.
		\end{split}
	\end{equation}
\end{definition}

The renormalised formulation~\eqref{eq:118} and the required functional setting alone are in general too weak to deduce the weak-strong uniqueness principle.
In Definition~\ref{def:diss.renorm} below, we introduce a more satisfactory generalised solution concept that strengthens Definition~\ref{def:renorm}, but is still general enough to be consistent with the existence results in~\cite{FHKM_2020}.

\begin{remark}\label{rem:testfk}
	By approximation, given a renormalised solution $z$, the equality~\eqref{eq:118} can be seen to hold true for a larger set of test functions $\psi\in C(I\times\bar\Omega)$ with $\partial_t\psi\in L^1_\loc(I;L^1(\Omega))$, $\nabla\psi\in L^2_\loc(I;L^2(\Omega))$, and for truncation functions $\xi\in C^2(\mathbb{R}_{\ge0}^{1+n})$ with $\supp D\xi$ compact.
\end{remark}

\begin{remark}[Notation]\label{rem:con.M} Let $z$ denote a renormalised solution of~\eqref{eq:erds} in the sense of Def.~\ref{def:renorm}.
	To keep notation simple and better emphasise the entropy structure of the diffusive part, we will often use a \enquote{symbolic} notation writing $\mathbb{M}_{il}(z)\nabla D_lh(z)$ instead of $A_{ik}(z)\nabla z_k$, where as before the summation convention is used. Likewise, we write  $\nabla D_ih(z)$ instead of the more precise notation $ D_{ij}h(z)\nabla z_j$. The point here is that while, by hypothesis, the weak derivatives $\nabla z_j, j=0,\dots,n,$ are well-defined (in the standard Sobolev/distributional sense), the function $D_ih(z)$ may not be weakly differentiable.
\end{remark}

\begin{remark}
	The following equation, equivalent to~\eqref{eq:118}, can be obtained  by \enquote{reversing the product rule} for $\nabla$
	\begin{equation}\label{eq:118..}
		\begin{split}
			&\int_\Om\xi(z(T,\cdot))\psi(T,\cdot)\,\dd x-\int_\Om\xi(\zin)\psi(0,\cdot)\,\dd x
			-\int_0^T\!\!\int_\Om\xi(z)\partial_t\psi\,\dd x\dd t
			\\&\quad=-\int_0^T\!\!\int_\Om \nabla\big(D_i\xi(z)\psi\big)\cdot \mathbb{M}_{il}(z)\nabla D_l(z)\,\dd x\dd t 
			+\int_0^T\!\!\int_\Om D_i\xi(z)R_i(z)\psi\,\dd x\dd t,
		\end{split}
	\end{equation}
	where we recall our convention $\mathbb{M}_{il}(z)\nabla D_lh(z):=A_{ik}(z)\nabla z_k$, see Remark~\ref{rem:con.M}.
\end{remark}

Notice that thanks to the hypothesis of $D\xi$ being compactly supported,  no growth restrictions have to be imposed on the reaction term $R(z)$ in~\eqref{eq:118}, and none of the flux terms $A_{ik}(z)\nabla z_k$ is necessarily required to be integrable in order for the integrals in~\eqref{eq:118} to converge. At the same time, this restrictive condition on the set of admissible truncation functions $\xi$ means that recovering a separate (e.g.~weak) formulation of a single component $i_0\in\{0,\dots, n\}$ of the system (assuming integrability of $A_{i_0k}(z)\nabla z_k$ and $R_{i_0}(z)$) is not immediate unless all components of the flux and the reactions are known to be integrable. 
Thus, consistent with the existence result for~\eqref{eq:erds} in~\cite{FHKM_2020}, the present analysis
additionally assumes a weak formulation for the energy component.

\begin{definition}[Weak solution of energy equation]\label{def:energyeq.weak}
	Let $z=(u,c)$ be as in Def.~\ref{def:renorm}. We say that $u$ is a \emph{weak solution} of the energy component $\partial_tu=\divv(A_{0j}(z)\nabla z_j)$ in $\Om_{T^*}$ with no-flux boundary conditions and initial condition $\uin$
	if $A_{0j}(z)\nabla z_j\in L^1_\loc(I;L^1(\Om))$ and if for all $\varphi\in C^1(I\times\bar\Om)$ and almost all $T<T^*$
	\begin{equation}\label{eq:u.weak}
		\begin{split}
			\int_\Om  u(T,\cdot)\varphi(T,\cdot)\,\dd x-\int_\Om  \uin\varphi(0,\cdot)\,\dd x 
			&- \int_0^T\!\!\int_\Om u\partial_t\varphi\,\dd x\dd t
			\\=& -\int_0^T\!\!\int_\Om A_{0j}(z)\nabla z_j\cdot\nabla\varphi\,\dd x\dd t.
		\end{split}
	\end{equation}
\end{definition}

By carefully using lower semicontinuity-type properties of the entropy and entropy dissipation, 
the existence proof of global-in-time weak and renormalised  solutions to ERDS as provided 
in~\cite{FHKM_2020} allows to show that 
for almost all $T\in(0,\infty)$ the constructed solutions satisfy the entropy dissipation inequality
\begin{equation}\label{eq:edin}\tag{\textsc{ed}}
	H(z(T))-H(\zin)	\le -\int_0^T\!\!\int_\Om \scrp(z)\,\dd x \dd t+ \int_0^T\!\!\int_{\Om} R_i(z)D_ih(z)\,\dd x\dd t,
\end{equation}
cf.~Section~\ref{ssec:ED.ren},
where we recall the notation $\scrp(z):=\nabla D_{i}h(z)\cdot\mathbb{M}_{il}(z)\nabla D_{l}h(z)\ge0$.
Observe that thanks to the non-negativity of $\scrp(z)$ and $-R_i(z)D_ih(z)$ (to be assumed throughout), any function $z=(u,c)$ with $u\in L^\infty_tL^1_x$ and  well-defined, measurable $\scrp(z)$ that satisfies~\eqref{eq:edin} with $h(\zin)\in L^1(\Om)$ necessarily has the regularity 
$\scrp(z)\in L^1(\Om_T)$ and $R_i(z)D_ih(z)\in L^1(\Om_T)$. (For the estimate on $H(z(T))$ required in this argument, see~\eqref{eq:112ptw} in the appendix.)

We further note that the energy equation $\partial_tu=\divv(A_{0j}(z)\nabla z_j)$ is satisfied in the weak sense by the solutions constructed in ref.~\cite{FHKM_2020}, that $\nabla u \in L^2(\Om_T)$, and that the quantity $G(u)=\tfrac{1}{2}\|u\|_{L^2(\Om)}^2$ satisfies (with an equality) 
\begin{equation}\label{eq:ene}\tag{\textsc{ene}}
	\En(u(T))-\En(\uin) \le -\int_0^T\!\!\int_\Om a(z)|\nabla u|^2\,\dd x\dd t
	-\int_0^T\!\!\int_\Om m(z)\nabla D_0h(z)\cdot\nabla u\,\dd x\dd t
\end{equation}
for almost all $T\in(0,\infty)$.
(In~\cite{FHKM_2020}, the case $m\equiv0$ was considered.)

The weak-strong uniqueness principle we will establish in our main theorem  (Theorem~\ref{thm:wkuniq} below) is valid for renormalised solutions that are \emph{dissipative} in the sense that they conform to~\eqref{eq:edin} and~\eqref{eq:ene}
(and satisfy some extra hypotheses related to the energy component).

\begin{definition}[Dissipative renormalised solution]\label{def:diss.renorm} 
	We call $z=(u,c)$ a \emph{dissipative renormalised solution} of  
	system~\eqref{eq:erds} in $\Om_{T^*}$  with initial data $\zin$ 
	if it is a renormalised solution with data $\zin$  in the sense of Definition~\ref{def:renorm} 
	that fulfils the energy equation in the weak sense of Definition~\ref{def:energyeq.weak} 
	with $\int_0^T\!\!\int_\Om a(z)|\nabla u|^2\,\dd x\dd t<\infty$ for all $T\in(0,T^*)$
	and further obeys the inequalities~\eqref{eq:edin} and~\eqref{eq:ene} for almost all $T\in(0,T^*)$.
\end{definition}

Let us stress that the renormalised solutions in~\cite[Theorem~1.8]{FHKM_2020} 
are in fact dissipative renormalised solutions in the sense of the above definition.\footnote{In Section~\ref{ssec:ED.ren} we explain how to deduce~\eqref{eq:edin} along the construction in~\cite[Theorem~1.8]{FHKM_2020}.}
Thus,  dissipative renormalised solutions provide a natural framework for studying~\eqref{eq:erds}. It is interesting to note that there are parallels of the present setting  
to classical literature such as~\cite{BBGGV_1995,BM_1997} on elliptic/parabolic equations, where a satisfactory solution concept is obtained by additionally imposing certain entropy or regularity conditions that are indispensable for the uniqueness proof.
See also~\cite{DalMMOP_1999,CW_1999} and references therein as well as the classical literature on conservation laws mentioned in the introduction.

Before stating our main theorem, we need to specify what we  understand by a \enquote{strong} solution.
\begin{definition}[Weak solution]\label{def:weaksol}Let $I=[0,T^*)$.
	We call a function $z\in L^1_\loc(I;W^{1,1}(\Om))$, $z=(u,c_1,\dots, c_n)$ 
	with $z_i\ge0$ for all $i$, 
	a \emph{weak solution} of~\eqref{eq:erds} in $\Omega_{T^*}$ with initial data $\zin$ if $A_{ij}(z)\nabla z_j,R_i(z)\in L^1_\loc(I;L^1(\Om))$ for all $i$,
	and if for all $\psi\in C^\infty(I\times\bar\Omega)^{1+n}$ and almost all $T\in(0,T^*)$
	\begin{equation}\label{eq:weakform}
		\begin{split}
			\int_\Om  z_{i}(T,\cdot)\psi_i(T,\cdot)\,\dd x&-\int_\Om  \zin_i\psi_i(0,\cdot)\,\dd x 
			- \int_0^T\!\!\int_\Om z_i\partial_t\psi_i\,\dd x\dd t
			\\&= -\int_0^T\!\!\int_\Om A_{ij}(z)\nabla z_j\cdot\nabla\psi_i\,\dd x\dd t+\int_0^T\!\!\int_\Om R_i(z)\psi_i\,\dd x\dd t.
		\end{split}
	\end{equation}
\end{definition}

\begin{definition}[\enquote{Strong} solution]\label{def:strongsol} Let $I=[0,T^*)$.
	We call $z=z(t,x)$ a \emph{strong solution} of system~\eqref{eq:erds} in $\Om_{T^*}$ 
	with data $\zin$ 
	if it is a weak solution in the sense of Definition~\ref{def:weaksol}, has the regularity $z\in C^{0,1}_\loc(I\times\bar\Om)$ and satisfies $\inf_{\Om_{T}} z_i>0$ for $i=0,\dots,n$ and every $T\in(0,T^*)$.
\end{definition}

For strictly positive, Lipschitz continuous initial data, Amann's work~\cite{Amann_II} guarantees the local-in-time existence of a strong solution for the applications considered in the present manuscript (see Section~\ref{ssec:examples}).

\begin{theorem}[Weak-strong uniqueness]\label{thm:wkuniq}
	Let $T^*\in(0,\infty]$.	Assume hp.~\ref{it:hC3}--\ref{eq:hp.A00}.
	Further suppose that~\ref{hp:grad.flux.control} holds true,  or alternatively that $\mOns=0$ and that~\ref{hp:trunc.grad.flux} is fulfilled.
	Let $\zin=(\uin,\cin)\in L^1(\Om)^{1+n}$, $\zin_i\ge0$ for all $i$, and $h(\zin)\in L^1(\Om), \;\uin\in L^2(\Om)$.
	Let $z=(u,c)$ be a dissipative renormalised solution of system~\eqref{eq:erds} in $\Om_{T^*}$ in the sense of Definition~\ref{def:diss.renorm} taking the initial data $\zin$.
	If $\tilde z$ is a strong solution in $\Om_{T^*}$ in the sense of Definition~\ref{def:strongsol} taking the same initial data $\zin$,
	then $z=\tilde z$ a.e.~in $\Om_{T^*}$.
\end{theorem}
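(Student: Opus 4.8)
The plan is to establish the weak–strong stability estimate~\eqref{eq:dist.ineq} for the weighted generalised distance $\Dist_\alpha(z,\tilde z)=H_\rel^*(z,\tilde z)+\tfrac{\alpha}{2}\|u-\tilde u\|_{L^2(\Om)}^2$ and then conclude by Gr\"onwall together with the coercivity of $\Dist_\alpha$ from Proposition~\ref{prop:coerc.hrel}. First I would fix the parameters in the right order: choose $\iota=\iota(\tilde z)>0$ small enough that $\inf_{\Om_T}\tilde z_i\ge 2\iota$ for all $i$; choose the outer scale $E\ge 1$ large enough that $E\ge\|\,|\tilde z|_1\,\|_{L^\infty(\Om_T)}+1$ and large enough for the coercivity lower bound $\dist_\alpha(z,\tilde z)\gtrsim\sum_i c_i\log_+c_i+u^2+1$ to hold on $\{|z|_1\ge E\}$ (Proposition~\ref{prop:coerc.hrel}); then choose the inner/outer truncation ratio for $\xi^*$ (i.e.\ $E'=E^N$ with $N$ large) as in Section~\ref{ssec:def.xi*} so that the logarithmic decay of $D\xi^*$ suffices for the absorption in case $\mathcal B$; finally choose $\alpha\ge 1$ (or $\alpha\ge\alpha_0(\tilde z,\ldots)$) large enough to dominate the cross terms involving $\nabla u$ coming from $\rho^{(h)}$, using hp.~\ref{eq:hp.A00} and the second line of~\eqref{eq:rho.u.A+}. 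Since $z=\tilde z$ initially, $\Dist_\alpha(z(0),\tilde z(0))=0$, so the Gr\"onwall conclusion gives $\Dist_\alpha(z(T),\tilde z(T))=0$ for a.e.\ $T<T^*$, hence $z=\tilde z$ a.e.

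Next I would make the formal identity $\tfrac{\dd}{\dd t}\Dist_\alpha(z,\tilde z)=\int_\Om\rho^{(h)}\,\dd x+\alpha\int_\Om\rho^{(\en)}\,\dd x$ rigorous. For $\rho^{(h)}$ this means testing the renormalised formulation~\eqref{eq:118..} for $z$ with the truncation $\xi=\xi^*$-modified density and the Lipschitz coefficients built from $\tilde z$ (legitimate by Remark~\ref{rem:testfk} and~\ref{hp:trunc}, which guarantees the truncated fluxes are $L^2$), combining with the chain rule applied to the strong solution $\tilde z$; the term $\tfrac\alpha2\|u-\tilde u\|^2$ is handled using the weak formulation of the energy equation (Definition~\ref{def:energyeq.weak}) and~\eqref{eq:ene} for $z$ — here the inequality~\eqref{eq:ene} rather than an equality is used, which is fine because the missing term has the right sign. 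The content of Lemma~\ref{l:evol.hrel} supplies the precise form~\eqref{eq:rho.h.v1}. The dissipation inequalities~\eqref{eq:edin},~\eqref{eq:ene} are what upgrade the formal computation to an \emph{inequality} $\tfrac{\dd}{\dd t}\Dist_\alpha\le\int_\Om\rho_\alpha\,\dd x$ for the dissipative renormalised solution.

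The heart of the proof is the pointwise bound~\eqref{eq:ptw.rho}, $\rho_\alpha\lesssim\dist_\alpha(z,\tilde z)$, proved by the four-case split on the value $z(t,x)\in[0,\infty)^{1+n}$. In case $\mathcal A_+=\{\min(z)\ge\iota,\ |z|_1\le E\}$ one rewrites $\rho^{(h)}$ in the fully-differenced form (as in the excerpt) so that the leading term is $-\nabla(D_ih(z)-D_ih(\tilde z))\cdot\mathbb M_{il}(z)\nabla(D_lh(z)-D_lh(\tilde z))\le 0$; the non-degeneracy~\eqref{eq:Mnd} on $\{\min(z)\ge\iota\}$ lets one absorb the $\nabla z$-quadratic cross terms, Taylor's theorem (with $h\in C^4$, $\tilde z$ Lipschitz) handles the second-order remainder terms, and the local Lipschitz continuity of $R$ (hp.~\ref{it:R.locLip}) handles the reaction terms, yielding $\rho_\alpha\lesssim|z-\tilde z|^2\lesssim\dist_\alpha(z,\tilde z)$ since on this compact region $\dist_\alpha\sim|z-\tilde z|^2$. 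In case $\mathcal A_0=\{\min(z)<\iota,\ |z|_1\le E\}$ one has $|z-\tilde z|\ge\iota$, hence $\dist_\alpha(z,\tilde z)\gtrsim 1$, so it is enough to absorb the unsigned $\nabla z$-terms of~\eqref{eq:rho.h.v1} into $-\scrp(z)$ using the truncated coercivity bounds~\ref{hp:trunc} and then bound everything else by a constant (using $|z|\le E$ and Lipschitz bounds on $\tilde z$, $R$); the $\nabla u$-terms are controlled via~\eqref{eq:hp.A00} and the $\alpha\rho^{(\en)}$ term as above. In case $\mathcal C=\{z\notin\supp\xi^*\}$, $\rho^{(h)}$ collapses to $-\scrp(z)+D_ih(z)R_i(z)+(\text{good sign})+(\text{lower-order in }\tilde z)$ by construction of $\xi^*$, and the entropy-dissipating structure $\scrp(z)\ge 0$, $D_ih(z)R_i(z)\le 0$ plus the superlinear lower bound $\dist_\alpha\gtrsim\sum_i c_i\log_+c_i+u^2+1$ absorb the remaining terms — here hp.~\ref{hp:grad.flux.control} (or~\ref{hp:trunc.grad.flux} with $\mOns=0$) is what controls the flux–gradient products $|\nabla z|\,|A_{ij}(z)\nabla z_j|$ that cannot be truncated. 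The genuinely delicate case is the transition layer $\mathcal B=\{0<\xi^*(z)<1\}$, i.e.\ $E\le|z|_1\le E'$: the second term on the RHS of~\eqref{eq:rho.h.v1} contains summands quadratic in $\nabla z$ weighted by $D\xi^*(z)$, and absorbing these into $-\scrp(z)$ requires the logarithmic gain in the decay of $D\xi^*$ on the scale $[E,E']=[E,E^N]$, following Fischer~\cite{Fischer_2017}; I expect this — constructing $\xi^*$ precisely and closing the absorption with the right choice of $N$ and of the relative weight against $\alpha\rho^{(\en)}$ — to be the main obstacle, with the interplay between the non-integrable-flux hypothesis~\ref{hp:grad.flux.control} and the energy weight $\alpha$ being the subtle point that distinguishes the ERDS setting from~\cite{Fischer_2017}.
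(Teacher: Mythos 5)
Your outline correctly reproduces the paper's overall architecture: the Gr\"onwall closure on $\Dist_\alpha$, the coercivity from Proposition~\ref{prop:coerc.hrel}, the rigorous derivation of the evolution inequality $\tfrac{\dd}{\dd t}\Dist_\alpha\le\int_\Om\rho_\alpha$ via~\eqref{eq:edin},~\eqref{eq:ene} and the renormalised/weak formulations, and the four-case split $\mathcal A_+,\mathcal A_0,\mathcal B,\mathcal C$ with the fully-differenced rewriting of $\rho^{(h)}$ in $\mathcal A_+$ and the Fischer-style logarithmic gain in $\mathcal B$. That matches the paper.

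There is, however, a genuine gap in your choice and ordering of the parameter $\alpha$. You fix $E$, then $N$, then take $\alpha\ge 1$ \emph{large}. This is the right order for the branch of the theorem where $\mOns=0$ and~\ref{hp:trunc.grad.flux} is assumed. But the theorem also covers the branch $\mOns=1$ (i.e.\ $m\not\equiv 0$) with hp.~\ref{hp:grad.flux.control}, and for that branch taking $\alpha$ large is fatal: from~\eqref{eq:126''b} the energetic residual $\rho^{(\en)}$ contributes, after Cauchy--Schwarz, a positive term of size $\alpha\,C\,m(z)\,\mOns\,|\nabla D_0h(z)-\nabla D_0h(\tilde z)|^2$, while the only matching good term sits in $\rho^{(h)}$ and is $-m(z)\,\mOns\,|\nabla D_0h(z)-\nabla D_0h(\tilde z)|^2$ with a \emph{fixed} (not $\alpha$-scaled) prefactor coming from~\eqref{eq:Mnd}. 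Absorption therefore forces $\alpha\lesssim 1$, i.e.\ $\alpha$ must be chosen \emph{small}, and the paper does exactly this: for $\mOns=1$ it fixes $\alpha\in(0,1]$ first (with a smallness threshold independent of $E$ — this independence is the key point, explicitly flagged in the proof and reflected in the $\alpha$-dependent coercivity bound~\eqref{eq:coercAc01}), and only then chooses $E=E(\alpha)$ and $N=N(E)$. So your scheme does not close for $\mOns=1$; you would need to split into the two regimes as the paper does, picking $\alpha$ small with $E$ depending on $\alpha$ when $\mOns=1$, and $\alpha$ large with $E$ chosen first when $\mOns=0$. (A secondary point, which you can fold into the same fix: in the $\mOns=0$, $z\in\mathcal B$ case the paper also invokes the minimum principle of Lemma~\ref{l:u.positivity} to obtain a strictly positive lower bound $\underline u$ on $u$, which is needed to apply~\ref{hp:trunc.grad.flux}; this step is missing from your sketch.)
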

The proof of Theorem~\ref{thm:wkuniq} will be completed in Section~\ref{ssec:stab.ineq}.
In Section~\ref{ssec:examples} we provide a list of examples covered by this theorem, including a class of (isoenergetic) cross-diffusion systems. 

As mentioned in the introduction, the proof of Theorem~\ref{thm:wkuniq} is based on a weak-strong stability type estimate with a generalised distance involving a modified relative entropy and an $L^2$-part for the energy component  (cf.~Sec.~\ref{ssec:technique}).
The entropy inequality~\eqref{eq:edin} that dissipative renormalised solutions conform to 
is a fundamental ingredient in the proof.  As we will see in Section~\ref{ssec:ED.ren}, under suitable hypotheses on the data, the solutions constructed in ref.~\cite{FHKM_2020} enjoy this estimate. 
Note that similar situations are encountered in the context of weak-strong uniqueness in fluid dynamics problems, see e.g.~\cite{Wiedemann_2018,RRS_2016} and references therein.
We would, however, like to point out that for many of the models we are interested in, 
it is possible to derive the entropy dissipation inequality~\eqref{eq:edin} from the renormalised formulation if suitable regularity conditions are met (the integrability of $\scrp(z)$ and $a(z)|\nabla u|^2$ are sufficient). 
In order to illustrate the ideas, we provide a proof of inequality~\eqref{eq:edin} 
for one of the models in the ref.~\cite{FHKM_2020} considering ERDS of the form
\begin{subequations}
	\label{eq:ex.erds}
	\begin{align}
		\label{ex.u}
		\dot u &=
		\divv \!\big( \,  a(u,c)  \nabla u+m(z)\nabla D_0h(z)\big),
		\\
		\label{ex.c}
		\dot c_i &= \divv \Big( m_i(u,c) \nabla \log(\tfrac{c_i}{w_i(u)})
		+   a(u,c)  c_i \tfrac{w'_i(u)}{w_i(u)} \nabla u \bigg) + R_i(u,c),
	\end{align}
\end{subequations}
where, as before, $h(u,c)$ satisfies~\eqref{eq:S0},~\eqref{eq:h2}, and where 
$a(z):=\pi_1(z)\gamma(z)$, with 
\begin{align}\label{eq:def.gamma}
	\gamma(u,c):=\big(\partial^2_u h- \sum_{i=1}^n c_i \big(\tfrac{w'_i}{w_i}\big)^2 \big) 
	= -\hat\sigma''(u)-\sum_{i=1}^nc_i\tfrac{w_i''(u)}{w_i(u)}>0.
\end{align}
System~\eqref{eq:ex.erds} is obtained as a special case of~\eqref{eq:erds} by choosing
\begin{equation}
	\label{eq:mobility1}
	\mathbb M(z) := \diag \big(m, m_1,\dots,m_n\big) + \pi_1\mu\otimes\mu
\end{equation}
with non-negative functions $m,m_i,\pi_1\in C([0,\infty)^{1+n})$ 
to be specified below, and
$\mu=(1,\mu_1,\dots, \mu_n)$ determined by
\begin{align}
	\label{eq:defmui}
	\mu_i(u,c) := \frac{w_i'(u)}{w_i(u)}c_i\qquad\qquad \text{ for } i\in\{1,\dots, n\}.
\end{align}
Throughout, 
the bound
\begin{align}\label{eq:hp.pi1}
	0\le \sqrt{\pi_1(u,c)}\lesssim 1+u
\end{align} 
and the mild regularity condition $\sqrt{\pi_1(u,c)}w_i'(u)\in C([0,\infty)^{1+n}), i=1,\dots,n,$ will be imposed,
the latter ensuring the continuity of $\mathbb{M}$ in $[0,\infty)^{1+n}$.

The functions $m_i(u,c)$, $i\in\{1,\dots, n\}$, are  assumed to take the following form for certain $a_i\in C([0,\infty)^{1+n})$ and $\kappa_{0,i},\kappa_{1,i}\ge0:$
\begin{equation}\label{eq:ai}
	\begin{split}
		&m_i(u,c) = c_ia_i(u,c), \qquad\quad\text{where}
		\quad\; a_i(u,c)\sim\kappa_{0,i}+\kappa_{1,i}c_i.
	\end{split}
\end{equation}

\smallskip

{\noindent \bf Hypotheses of Model~\hypertarget{Mzero}{(M0)}.}	 Model~\Mzero{} consists of the following conditions:
\begin{itemize}
	\item Entropy density $h$ is given by~\eqref{eq:S0},~\eqref{eq:h2} with coefficient functions satisfying
	\begin{align}\label{eq:wi.M0}
		(w_i')^2\lesssim -w_i''w_i
	\end{align}
	for all $i\in\{1,\dots,n\}$.
	\item Reactions $R_i\in C([0,\infty)^{1+n})$, $i=1,\dots, n$, satisfy~\ref{eq:hp.Rdiss}, where $R_0\equiv0$.
	\item $\mathbb{M}$ is given by~\eqref{eq:mobility1}--\eqref{eq:ai} with 
	\begin{itemize}[label=$\circ$]
		\item rank-one part:
		$\pi_1(z)\sim\tfrac{1}{\gamma(z)}$, where $\gamma$ is given by~\eqref{eq:def.gamma}.
		\item diagonal part: $0\le m(z)\lesssim\mOns$ for some $\mOns\in \{0,1\}$, and 
		$m_i(u,c)$ given by~\eqref{eq:ai} with $\kappa_{0,i}=1,\kappa_{1,i}=0$ for all $i\in\{1,\dots,n\}$.
	\end{itemize}
\end{itemize}
We recall that the evolution law~\eqref{eq:erds} associated with Model~\Mzero{} takes the form~\eqref{ex.u}, \eqref{ex.c}. (Cf.\ Lemma~\ref{l:modelBounds} and~\cite{FHKM_2020} for details.)
It is further easy to see that condition~\eqref{eq:hp.pi1} is compatible with the choice  $\pi_1(z)\sim\tfrac{1}{\gamma(z)}$ for any power law $\hat\sigma(u)= u^\nu$, $\nu\in(0,1)$ and $\hat\sigma(u)=\log(u)$.

Model~\Mzero{} generalises the special case $\mathbb{M}(u,c)=\text{const}\cdot (D^2h(u,c))^{-1}$ considered in~\cite{HHMM_2018}. It allows for species-dependent diffusivities, and genuinely contains cross terms in this case. More precisely, for models with species-dependent diffusion coefficients, thermodynamical consistency always leads to cross-diffusion effects, since for a diagonal diffusion matrix $A=\diag(\dots)\in \mathbb{R}^{(1+n)\times(1+n)}$ that is not a multiple of the identity matrix the product $\mathbb{M}=A(D^2h)^{-1}$ is not symmetric. 

In the derivation of the entropy dissipation inequality~\eqref{eq:edin}, we need to assume that 
\begin{align}\label{eq:705}
	m(u,c)\sum_{l=1}^nc_l\lesssim (1+u)^2,
\end{align} 
and have to impose the following conditions mainly serving to avoid issues for small values of $u$ close to zero:
\begin{align}\label{eq:704}\begin{cases}
		m(u,c)|\hat\sigma''(u)|\lesssim 1,
		\\-\hat\sigma''(v)\lesssim -\hat\sigma''(u)+1\quad\text{ for all }v\ge u.
	\end{cases}
\end{align} 

\begin{proposition}[Strong entropy dissipation inequality]\label{prop:edin}
	Let $T^*\in(0,\infty]$.
	Let the hypotheses of Model \Mzero{} hold, and assume locally $\epsilon_0$-H\"older continuous reactions $R\in C^{\epsilon_0}_\loc([0,\infty)^{1+n})$ for some $\epsilon_0\in(0,1)$.
	In addition, assume the bounds~\eqref{eq:705},~\eqref{eq:704}, and suppose that $w_i\in C^2([0,\infty))$ for $i=1,\dots,n$.
	Let $\zin=(\uin,\cin)$, $\zin_i\ge0$,  and $\uin,\hat\sigma_{-}(\uin)\in L^1(\Om)$ and  $\cin_i\in L\log L(\Om)$ for all $i$.
	Let $z=(u,c)$ have non-negative components and suppose that 
	\begin{align}
		&u\in L^\infty_\loc([0,T^*);L^1(\Om)),
		\\&\scrp(z)\in L^1(\Omega_T),\quad a(z)|\nabla u|^2\in L^1(\Omega_T)\quad\text{ for all }T<T^*.
	\end{align}
	If $z$ is a renormalised solution (in the sense of Def.~\ref{def:renorm}) of system~\eqref{eq:erds} in $\Om_{T^*}$ with initial data $\zin$, then the strong entropy dissipation inequality is satisfied, i.e.
	\begin{align}\label{eq:edin.s}\tag{\textsc{ed}.s}
		H(z(t))-H(z(s)) 
		\le	-\int_s^t\!\int_\Om\scrp(z)\,\dd x\dd\tau+ \int_s^t\!\int_{\Om} R_i(z)D_ih(z)\,\dd x\dd \tau
	\end{align}
	for a.e.~$0\le s<t<T^*$,
	and for $s=0$\footnote{With the understanding that $H(z(0))=H(\zin)$.} and a.e.~$t\in(0,T^*)$. In particular, ineq.~\eqref{eq:edin} holds true  for a.e.~$T\in(0,T^*)$. 
\end{proposition}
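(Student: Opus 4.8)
\textbf{Proof strategy for Proposition~\ref{prop:edin}.}
The plan is to derive the strong entropy dissipation inequality~\eqref{eq:edin.s} from the renormalised formulation~\eqref{eq:118} by a suitable choice of truncation functions $\xi$ that approximate the entropy density $h$, combined with the regularity hypotheses $\scrp(z)\in L^1(\Om_T)$ and $a(z)|\nabla u|^2\in L^1(\Om_T)$. First I would fix a smooth cutoff $\theta_K\in C^\infty(\mathbb{R}_{\ge0}^{1+n})$ with $\theta_K\equiv1$ on $\{|z|\le K\}$, $\theta_K\equiv0$ on $\{|z|\ge 2K\}$, and compactly supported derivative, and set $\xi_K:=h\cdot\theta_K$ (or, to handle the singularity of $\log$ and $\hat\sigma$ near the coordinate hyperplanes, $\xi_{K,\eta}$ built from a regularised density $h_\eta$ that stays $C^2$ up to the boundary of the positive orthant and increases monotonically to $h$ as $\eta\downarrow0$). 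Testing~\eqref{eq:118} with $\xi=\xi_{K,\eta}$ and $\psi\equiv1$ yields
\begin{align*}
	\int_\Om\xi_{K,\eta}(z(T))\,\dd x-\int_\Om\xi_{K,\eta}(\zin)\,\dd x
	=-\int_0^T\!\!\int_\Om D_{ij}\xi_{K,\eta}(z)A_{ik}(z)\nabla z_k\cdot\nabla z_j\,\dd x\dd t
	+\int_0^T\!\!\int_\Om D_i\xi_{K,\eta}(z)R_i(z)\,\dd x\dd t.
\end{align*}
On the set $\{|z|\le K\}$ the first integrand reduces to $-\scrp(z)$ and the second to $R_i(z)D_ih(z)$ (in the regularised approximation: to the corresponding $\eta$-versions), both of which have a favourable sign in the limit $\eta\downarrow0$; hypotheses~\eqref{eq:hp.dztrunc.P}--\eqref{eq:hp.fluxtrunc.P} give the integrability needed to justify the manipulations there. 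On the transition region $K\le|z|\le 2K$ one must control the remainder terms generated by derivatives of $\theta_K$.

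The key step is to show that the contributions coming from $\{|z|\ge K\}$ vanish in the limit $K\to\infty$, or at least have the right sign. For the diffusive remainder, the terms involving one or two derivatives of $\theta_K$ are $O(K^{-1})$ and $O(K^{-2})$ pointwise, but they are multiplied by products $|\nabla z|\,|A_{ij}(z)\nabla z_j|$ and $|A_{ij}(z)\nabla z_j|^2$; here I would invoke hypotheses~\eqref{eq:705},~\eqref{eq:704} together with the structure of Model~\Mzero{} (in particular $\pi_1\sim1/\gamma$, $m_i=c_i$, $a=\pi_1\gamma$) to bound these by $C(|z|)(\scrp(z)+a(z)|\nabla u|^2)$ times the cutoff-derivative factors, so that dominated convergence applies using $\scrp(z),a(z)|\nabla u|^2\in L^1$. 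The genuinely non-negative leading term $-\scrp(z)\chi_{\{\theta_K=1\}}$ is handled by monotone/Fatou arguments to pass to the full domain. For the reaction remainder, $D_i\xi_{K,\eta}$ differs from $D_ih_\eta$ only through terms carrying $D\theta_K$, supported on $\{K\le|z|\le2K\}$; since $R_i(z)D_ih(z)\le0$ is $L^1$ by the structural sign condition~\ref{eq:hp.Rdiss} combined with the argument recalled after~\eqref{eq:edin} (any $z$ satisfying the inequality automatically has $R_iD_ih\in L^1$), and the extra terms are dominated by $|h(z)|\,|R(z)|\,|D\theta_K|$, one again needs a pointwise bound of the form $|h(z)\,R(z)|\lesssim C(|z|)(\text{something }L^1)$ on the support of $D\theta_K$; the local Hölder continuity of $R$ and the growth of $h$ make this tractable, but it is the point requiring care. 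Finally, $\int_\Om\xi_{K,\eta}(z(T))\,\dd x\to H(z(T))$ and $\int_\Om\xi_{K,\eta}(\zin)\,\dd x\to H(\zin)$ by monotone convergence using $u\in L^\infty_tL^1_x$, $\hat\sigma_-(\uin)\in L^1$, $\cin_i\in L\log L$, and the pointwise lower bound~\eqref{eq:112ptw} on $h(z(T))$.

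The version for general $0\le s<t<T^*$ follows by the same computation on the interval $(s,t)$ once it is known that $t\mapsto H(z(t))$ is defined for a.e.\ $t$ and that $z(s)\to\zin$ appropriately as $s\downarrow0$; this uses that the renormalised formulation holds for a.e.\ $T$ and a standard selection of Lebesgue points. I expect the main obstacle to be the uniform-in-$K$ control of the diffusive remainder on the annulus $\{K\le|z|\le2K\}$: unlike in the weak-strong estimate where one has a strong solution to compare against, here there is no comparison function, so one must extract from the specific algebraic form of $\mathbb{M}$ in Model~\Mzero{} and from~\eqref{eq:705}--\eqref{eq:704} a clean pointwise inequality bounding $|\nabla z|\,|A_{0j}(z)\nabla z_j|$ and the cross terms by $C(1+u)^2\big(\scrp(z)+a(z)|\nabla u|^2\big)$ (this is exactly the role played by~\eqref{eq:705} and the first bound in~\eqref{eq:704}), and then exploit the $K^{-1}$ decay of $D\theta_K$ against the $(1+u)^2$ growth — which works because on $\{|z|\ge K\}$ one has $1+u\lesssim|z|\lesssim K$ so the product is $O(1)$ but supported on a set of vanishing measure as $K\to\infty$, forcing the use of dominated convergence rather than a crude size estimate.
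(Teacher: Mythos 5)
Your choice of truncation $\xi_K = h\,\theta_K$, with $\theta_K$ a cutoff in the $z$-variable, creates two genuine obstructions. First, the reaction remainder $h\,(D_i\theta_K)\,R_i$ has no favourable sign, and closing the limit $K\to\infty$ would require $|h(z)\,R(z)|$ to be integrable on the annuli; but the entropy dissipation only controls $D_ih\,R_i$ in $L^1$, and since no growth bound on $R$ is assumed (only local H\"older continuity), $|h\,R|$ need not be in $L^1$ at all. Your acknowledgement that this is ``the point requiring care'' is in fact identifying a step that cannot be made to work without new information. Second, for the diffusive remainder, the cross term $(D_ih)(D_j\theta_K)(A\nabla z)_i\cdot\nabla z_j$ carries the logarithmic factor $D_ih\sim\log c_i$: on $\{K\le|z|\le 2K\}$, with $|D\theta_K|\sim K^{-1}$, $|(A\nabla z)_i|\lesssim\sqrt{c_i}\sqrt{\scrp(z)}$ and $|\nabla c_j|\lesssim\sqrt{c_j}\sqrt{\scrp(z)}$, you only get a bound of order $\log(K)\,\scrp(z)$. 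This is not a $K$-uniform $L^1$ dominant, so dominated convergence cannot send $K\to\infty$; your heuristic that $K^{-1}(1+u)^2$ is $O(1)$ on the annulus misses the extra $\log$-factor coming from $Dh$, which is precisely what has to be killed.

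The paper avoids both problems by composing a \emph{scalar} truncation with (a regularised version of) the entropy itself, $\xi := \theta_L\big(h_{\delta,\ve}(z^{\tilde\ve})\big)$, where $h_\delta = \delta u + h$ makes sublevel sets of the entropy bounded (so that $\supp D\xi$ is compact, as required by Def.~\ref{def:renorm}; the raw $h$ need not have bounded sublevel sets because of the energy--concentration coupling) and the $\ve,\tilde\ve$-shifts regularise $Dh$ near $\{u=0\}\cup\{c_i=0\}$. With this choice the reaction term becomes $\theta_L'(h_{\delta,\ve}(z^{\tilde\ve}))\,D_ih(z^{\tilde\ve})\,R_i(z)$ with $0\le\theta_L'\le1$, and after $\tilde\ve\to0$ its integrand has the sign of $D_ih\,R_i\le0$, so Fatou's lemma with $\theta_L'\nearrow1$ gives the $\limsup$ bound with no integrability of $R$ needed. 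For the diffusive remainder, the essential device is the tailored logarithmic decay $|\theta_L''(s)|\lesssim(1+|s|\log(|s|+\mathrm{e}))^{-1}$ of~\eqref{eq:decay.theta''} (an idea from~\cite{Fischer_2017}): combined with the coercivity~\eqref{eq:hreg} this yields $|\theta_L''(h_{\delta,\ve}(z^{\tilde\ve}))|\lesssim_\delta\big(1+(u+\sum_i c_i\log_+c_i)\log(|z|_1+\mathrm{e})\big)^{-1}$, and this extra logarithmic gain exactly cancels the $\log$-growth of $D_ih$, producing an $L$-uniform pointwise dominant $\lesssim\scrp(z)+|\nabla u|^2\in L^1$. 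That logarithmic refinement of the cutoff is the missing ingredient in your proposal.
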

For the proof of Proposition~\ref{prop:edin}, see Section~\ref{sec:edin}. 
Some comments on generalisations of Proposition~\ref{prop:edin} to other models are provided in Section~\ref{ssec:examples}.

In our final main result, we illustrate that a version of the generalised distance can further be used to prove exponential convergence to equilibrium. Exponential convergence in relative entropy has been studied at a formal level in~\cite{MM_2018} by means of log-Sobolev type inequalities leading to entropy-entropy dissipation estimates (see also~\cite{HHMM_2018}). In contrast to the present approach, \cite{MM_2018} solely relies on the relative entropy, which restricts the results to thermal parts $\sigma$ close to linear; for instance, the choice $\sigma(u)\sim\log u$ related to gas dynamics is only admitted in dimensions $d\le 2$. 
For simplicity, in the following result we disregard the reaction term and refer to~\cite{MM_2018}
for applications on mass action-type reactions.
We will further assume the strong energy inequality  i.e.
\begin{align}
	\En(u(t))-\En(u(s))
	\le -\int_s^t\!\int_\Om a(z)|\nabla u|^2\,\dd x\dd \tau
	-\int_s^t\!\int_\Om m(z)\nabla D_0h(z)\cdot\nabla u\,\dd x\dd \tau\;\quad\qquad
	\label{eq:ene.s}\tag{\textsc{ene}.s}
\end{align}
for a.e.~$0\le s<t<T^*$, and for $s=0$ and a.e.~$t\in(0,T^*)$.

\begin{proposition}[Exponential convergence to equilibrium]\label{prop:expconv}
	Let $\Om$ be smooth, recall that $|\Om|=1$, and let the hypotheses of Model~\Mzero{} hold.
	Let $\zin=(\uin,\cin)$ have non-negative components with $\hat\sigma_{-}(\uin)\in L^1(\Om)$, $\uin\in L^2(\Om)$, $\cin_i\in L\log L$.
	Let $z=(u,c)$ with $u\in L^\infty_\loc([0,\infty),L^1(\Om))$ 
	and $\nabla u\in L^2_\loc([0,\infty),L^2(\Om))$
	be a global-in-time renormalised solution of~\eqref{eq:erds} with $R\equiv0$,  and suppose that \eqref{eq:edin.s} and~\eqref{eq:ene.s} are satisfied (with $T^*=\infty$). 
	
	Then $\int_\Om z_i(t,x)\,\dd x = \int_\Om \zin_i(x)\,\dd x=:\bar z_i$ for all $i\in\{0,\dots,n\}$ and a.a.~$t>0$, and there exist constants $\alpha\in(0,1]$ and $\lambda=\lambda(\bar z,\alpha,\Om)>0$ such that for a.e. $t>0$
	\begin{align}\label{eq:expconvEntropy}
		\oDist_\alpha(z(t),\bar z)\le \mathrm{e}^{-\lambda t}\oDist_\alpha(\zin,\bar z),
	\end{align}
	where $\oDist_\alpha(z,\bar z)=H_\rel(z,\bar z)+\alpha G_\rel(u,\bar u)$ (cf.\ eq.~\eqref{eq:def.Hrel}).
\end{proposition}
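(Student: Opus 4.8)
\textbf{Proof strategy for Proposition~\ref{prop:expconv}.}
The plan is to combine the stability machinery developed for Theorem~\ref{thm:wkuniq} --- applied now with the \emph{constant} steady state $\bar z=(\bar u,\bar c)$ in the role of the ``strong solution'' --- with a spectral-gap/entropy--dissipation argument in the spirit of~\cite{MM_2018}, the novelty being that the extra $L^2$-term $G_\rel(u,\bar u)=\tfrac12\|u-\bar u\|_{L^2}^2$ provides the missing coercivity when $\hat\sigma$ is far from linear. First I would verify conservation of mass: testing the renormalised formulation~\eqref{eq:118} (extended as in Remark~\ref{rem:testfk}) with $\xi$ a truncation that equals $z_i$ on large balls and $\psi\equiv1$, and using $R\equiv0$ together with the no-flux structure, gives $\int_\Om z_i(t)\,\dd x=\bar z_i$ for a.e.\ $t$; here one uses the a priori bounds ($u\in L^\infty_t L^1_x$, $\scrp(z)\in L^1$, $\nabla u\in L^2$) to pass to the limit in the truncation. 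Note $\bar z$ is then automatically the detailed-balance equilibrium: since $\hat\sigma'$, $w_i$ are as in~\eqref{eq:h2}, $h$ is strictly convex and the constrained minimiser of $H$ on $\{\int z_i=\bar z_i\}$ is the unique constant state, which is a stationary solution with $Dh(\bar z)$ constant, hence $\scrp(\bar z)=0$.

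Next I would run the pointwise computation of Lemma~\ref{l:evol.hrel} (the source of formula~\eqref{eq:rho.h.v1}) with $\tilde z\equiv\bar z$. Because $\bar z$ is constant, $\nabla D_lh(\bar z)=0$ and $R(\bar z)=0$, so every term in $\rho^{(h)}$ except the first drops, and --- choosing $E$ large enough that $\xi^*(z)\equiv 1$ is irrelevant on the relevant range, or rather keeping the truncation exactly as in Section~\ref{ssec:def.xi*} --- one is left, after integrating and invoking~\eqref{eq:edin.s},~\eqref{eq:ene.s}, with
\begin{align}
\tfrac{\dd}{\dd t}\oDist_\alpha(z(t),\bar z)\le -\int_\Om\scrp(z)\,\dd x-\alpha\int_\Om a(z)|\nabla u|^2\,\dd x + (\text{truncation error terms}),
\end{align}
where the error terms are supported on $\{|z|_1\gtrsim E\}$ and are controlled, exactly as in case $\mathcal B\cup\mathcal C$ of the proof of Theorem~\ref{thm:wkuniq}, by a small multiple of $\scrp(z)+a(z)|\nabla u|^2$ plus a term $\lesssim \oDist_\alpha$ with a constant that can be made to lose only to the \emph{dissipation}, not to $\oDist_\alpha$ itself; the point is that against a constant target the ``linear growth'' obstruction of Section~\ref{ssec:technique} disappears because $\nabla\bar z=0$ kills the cross terms $(a(z)-a(\bar z))\nabla u\cdot\nabla\bar z$ etc. The upshot is a clean differential inequality $\tfrac{\dd}{\dd t}\oDist_\alpha(z,\bar z)\le -\int_\Om\scrp(z)\,\dd x-\tfrac{\alpha}{2}\int_\Om a(z)|\nabla u|^2\,\dd x$.

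It then remains to prove the entropy--entropy-dissipation (EED) inequality
\begin{align}
\oDist_\alpha(z,\bar z)\le \tfrac1\lambda\Big(\int_\Om\scrp(z)\,\dd x+\tfrac{\alpha}{2}\int_\Om a(z)|\nabla u|^2\,\dd x\Big)
\end{align}
for all admissible $z$ with $\int_\Om z_i=\bar z_i$, and then Gr\"onwall gives~\eqref{eq:expconvEntropy}. For the concentration part of $\scrp(z)$ one uses the lower bound $\scrp(z)\gtrsim\sum_{i=1}^n|\nabla\sqrt{c_i}|^2$ valid for Model~\Mzero{} (Lemma~\ref{l:modelBounds}) together with a logarithmic Sobolev / Csisz\'ar--Kullback--Pinsker argument on $\Om$ (here smoothness of $\Om$ and $|\Om|=1$ enter) to bound $\sum_i H_\rel(c_i,\bar c_i)$-type quantities; for the thermal part one splits $H_\rel(z,\bar z)$ using~\eqref{eq:S0} into a $u$-only convex piece controlled by $\alpha\|u-\bar u\|_{L^2}^2$ after a Poincar\'e inequality applied to $\nabla u$ (via $a\gtrsim1$), and mixed $c_i\log w_i(u)$ pieces handled by the concavity/growth bounds $w_i(u)\lesssim 1+u^\beta$ in~\eqref{eq:h2}. \textbf{The main obstacle} I anticipate is precisely this EED step: one must choose $\alpha\in(0,1]$ small enough that the $u$-dependence hidden inside $b(c_i,w_i(u))$ --- which couples the species entropy to the thermal variable and is not quadratically controlled by $\scrp$ alone when $\hat\sigma$ is strongly sublinear --- is dominated by the quadratic $\tfrac\alpha2\|u-\bar u\|_{L^2}^2$, and simultaneously that the cross terms generated do not reverse the sign of the dissipation; quantifying the admissible range of $\alpha$ and the resulting $\lambda=\lambda(\bar z,\alpha,\Om)$ is where the bulk of the technical work lies, and it is exactly the place where having the extra $L^2$-quantity (rather than relative entropy alone, as in~\cite{MM_2018}) buys the relaxed hypotheses on $\hat\sigma$ advertised in the introduction.
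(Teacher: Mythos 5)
Your plan shares the right ingredients with the paper (mass conservation via truncated test functions, log--Sobolev for the concentration part, Poincar\'e--Wirtinger for the $L^2$ energy part), but it has two substantive issues.

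First, the entropy-evolution step is over-engineered in a way that the paper avoids entirely. You propose to re-run Lemma~\ref{l:evol.hrel} with $\tilde z\equiv\bar z$, keeping the truncation $\xi^*$ and then controlling the truncation errors ``as in case $\mathcal B\cup\mathcal C$''. The paper's observation is simpler and sharper: once conservation $\int_\Om z_i\,\dd x=\bar z_i$ is in place, the cross term in the \emph{classical} relative entropy vanishes,
$\int_\Om D_ih(\bar z)(z_i-\bar z_i)\,\dd x = D_ih(\bar z)\int_\Om(z_i-\bar z_i)\,\dd x = 0$,
so $H_\rel(z(t),\bar z)=H(z(t))-H(\bar z)$ with no truncation, and one reads off $\tfrac{\dd}{\dd t}H_\rel(z,\bar z)\le -\int_\Om\scrp(z)\,\dd x$ directly from~\eqref{eq:edin.s}. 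Your route is not wrong (with $\tilde z$ constant and $R(\bar z)=0$ most of the terms in $\rho^{(h)}$ do drop, and the surviving $\mathcal B$-term is absorbed by $\tfrac1N\scrp$ via~\ref{hp:grad.flux.control}), but it forces you to re-verify truncation absorption estimates that are simply unnecessary here. Also, you should say explicitly that the ``$\lesssim\oDist_\alpha$'' contributions appearing in cases $\mathcal B$ and $\mathcal C$ of Theorem~\ref{thm:wkuniq} come entirely from $\nabla\tilde z$ and $R(\tilde z)$, and hence vanish when $\tilde z=\bar z$ is constant and $R\equiv0$; as written, ``plus a term $\lesssim\oDist_\alpha$'' contradicts the clean differential inequality you claim two lines later.

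Second — and this is the real gap — you leave the entropy--entropy-dissipation inequality, which is the heart of the result, unproved. You flag it as ``the main obstacle'' and ``where the bulk of the technical work lies'' but do not resolve it. The paper closes it in two pieces: (i) uniform convexity of $h$ on $\{|z|\le E\}$ gives $B(c,\bar c)+\alpha|u-\bar u|^2\gtrsim_E |z-\bar z|^2\gtrsim_E h_\rel(z,\bar z)$ there (cf.\ the argument for~\eqref{eq:coercA}); (ii) for $|z|>E$ the entropy growth bounds of Lemma~\ref{l:S} give $B(c,\bar c)+\alpha|u-\bar u|^2\ge\tfrac12\big(\sum_ic_i\log c_i+\alpha u^2\big)-C(\bar z)$, which dominates $h_\rel(z,\bar z)$ once $E=E(\bar z,\alpha)$ is large. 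This is exactly the coercivity machinery of Proposition~\ref{prop:coerc.hrel}, specialised to the constant target, and no additional ``$\alpha$-absorption of the $c_i\log w_i(u)$ coupling'' is needed beyond choosing $E$ large. What $\alpha$ is actually used for in the paper is to absorb the $\mOns\,m(z)$ contribution in~\eqref{eq:ene.s} by half the diffusive dissipation (choose $\alpha C\mOns\le\tfrac12$), not for the reason you suggest. You should supply the two-regime coercivity bound explicitly — without it the Gr\"onwall step does not close.
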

See Section~\ref{sec:exp.conv} for the proof of Prop.~\ref{prop:expconv}. 
For non-trivial reactions obeying mass action kinetics, 	
the steady state $\bar z$ associated with~\eqref{eq:erds} is determined by solving a constrained minimisation problem 
for the convex entropy functional $H(u,c)$ imposing energy conservation $E(u,c)=\int_\Om u\overset{!}{=}E_0$ and further linear constraints taking into account possible conservation laws for the concentrations (see~\cite{HHMM_2018,MM_2018}). 
Prop.~\ref{prop:expconv} considers the simplest case, where all species have a conserved mass. Extension to more general reactions is usually achieved by means of suitable coercivity estimates for the dissipation term $-D_ih(z)R_i(z)\ge0$ associated with the reactions.
See~\cite{HHMM_2018,MM_2018} for applications in a non-isothermal setting; for previous works
in the isothermal case, we refer to~\cite{MHM_2015,DFT_2017,Mielke_2017_expDecay} and references therein.
Let us observe that since~\Mzero{} allows for $m\not=0$, leading to energy flux induced by temperature gradients, 
a maximum principle for the internal energy density (as is crucially used in~\cite{HHMM_2018}) is not available here.
We further note that, with a standard Csisz\'ar--Kullback--Pinsker inequality~\cite{Csiszar_1963,UAMT_2000}, the bound~\eqref{eq:expconvEntropy} can be used to deduce exponential convergence to equilibrium in $L^2(\Om)\times (L^1(\Om))^n$. 

\begin{remark}Observe that the condition~\eqref{eq:edin.s} in Prop.~\ref{prop:expconv} is satisfied under the additional hypotheses on the coefficient functions imposed in Prop.~\ref{prop:edin}.
	Under suitable regularity hypotheses, the strong energy inequality~\eqref{eq:ene.s} (with an equality) can be proved similarly as in~\cite[\ustrong]{FHKM_2020}.
\end{remark}

\subsection{Examples}\label{ssec:examples}

Below, we provide relevant applications of the weak-strong uniqueness result. 

\subsubsection{Energy-reaction-diffusion systems}
The hypotheses of Theorem~\ref{thm:wkuniq} are compatible with the class of ERDS introduced in~\cite{FHKM_2020}. 
In that work, the existence of generalised solutions (weak or renormalised) has been established for two classes of models, both taking the form~\eqref{eq:ex.erds} with $m\equiv0$.

One of the models in~\cite{FHKM_2020} is a special case of~\Mzero{} (see page~\pageref{eq:wi.M0}) with $m\equiv0$. A brief verification of the model hypotheses of Theorem~\ref{thm:wkuniq} for~\Mzero{} is provided in the appendix, see Lemma~\ref{l:applic.M0}.
The existence analysis for~\Mzero{} focuses on reactions obeying suitable growth conditions, in which case there are global-in-time weak solutions.
However, renormalised solutions can be constructed 
by adapting the proof of~\cite[\thmRen]{FHKM_2020}, and in this case no growth restrictions on $|R(z)|$ are required. 
Conceptually, the construction of renormalised solutions for~\Mzero{} is simpler than in~\cite[\thmRen]{FHKM_2020}, since the diffusive flux is integrable and
the strong convergence of $\nabla u$ is not required in case of~\Mzero{}.

The second class of models considered in~\cite{FHKM_2020} will here be referred to as~\hypertarget{Mone}{{\bf (M1)}}. 
It again takes the form~\eqref{eq:ex.erds} with $m\equiv0$, and assumes the following conditions:
\begin{itemize}
	\item Entropy density $h$ is given by~\eqref{eq:S0},~\eqref{eq:h2}.
	\item Reactions $R_i\in C([0,\infty)^{1+n})$, $i=1,\dots, n$, satisfy~\ref{eq:hp.Rdiss}, where $R_0\equiv0$.
	\item $\mathbb{M}$ is given by~\eqref{eq:mobility1}--\eqref{eq:ai} with $m(z)\equiv0$, and 
	$m_i(u,c)$ given by~\eqref{eq:ai}, where $\kappa_{0,i},\kappa_{1,i}$ satisfy one of the following: 
	\begin{enumerate}[label=(\roman*)]
		\item\label{it:M1.ren} $\kappa_{0,i}=1$ and $\kappa_{1,i}=0$ for all $i$
		\item\label{it:M1.weak} $\kappa_{0,i}\ge0$ and $\kappa_{1,i}=1$ for all $i$
	\end{enumerate}
	\item Moreover,
	\begin{itemize}[label=$\circ$]
		\item $\pi_1\gamma\gtrsim 1$ \qquad \qquad ($\gamma$ as in~\eqref{eq:def.gamma})
		\item $w_i'(u)\lesssim-w_i''(u)\sqrt{\pi_1}$
		\item $\sqrt{\pi_1}\tfrac{w_i'}{w_i}\lesssim 1$.
	\end{itemize}
\end{itemize}
Global weak solutions in case~\ref{it:M1.weak} have been constructed  in~\cite[\thmWeak]{FHKM_2020} for reactions obeying suitable growth hypotheses.
More interesting is case~\ref{it:M1.ren}, in which the existence of global renormalised solutions has been established in~\cite[\thmRen]{FHKM_2020} for general continuous reactions satisfying~\ref{eq:hp.Rdiss}. A pecularity of this model lies in the circumstance that the renormalised formulation is needed not only to give a meaning to the reaction rates, but also to the diffusion flux $A(z)\nabla z$, which may be non-integrable.
Model~\text{(M1.i)} satisfies conditions~\eqref{eq:hp.ueq1} (with~$\mOns=0$), \eqref{eq:Mnd}, \ref{hp:trunc}, \ref{eq:hp.A00} and~\ref{hp:trunc.grad.flux} of Theorem~\ref{thm:wkuniq}.
We refer to~\cite[\secentropytools.2]{FHKM_2020} (notably the proof of~\fluxbound{} therein), where the necessary estimates can be found.
For verifying~\ref{hp:trunc.grad.flux}, one should also use the fact that the coefficient function $a(z)=\pi_1(z)\gamma(z)$ satisfies the bound
$\chi_{\{u\ge\underline{u}\}}|a(u,c)|\lesssim C(|z|,\underline{u})$ for any $\underline{u}>0$.
Thus, under the extra smoothness assumptions $\hat\sigma, w_i\in C^4((0,\infty))$ and $R_i,m,m_i,\pi_1\in C^{0,1}_\loc((0,\infty)^{1+n})$, Theorem~\ref{thm:wkuniq} is applicable. 
We caution that verifying~\eqref{eq:edin} as it was done in Prop.~\ref{prop:edin} for \Mzero{} is much more delicate for Model (M1.i) due to the possibility of strong cross diffusion caused by a non-integrable diffusion flux. 
Whether or not (M1.i) admits an analogue of Prop.~\ref{prop:edin} is an open question.
Recall, however, that for the solutions constructed in~\cite[Theorem~1.8]{FHKM_2020} inequality~\eqref{eq:edin} can be deduced from the construction using lower semicontinuity arguments (cf.~Section~\ref{ssec:ED.ren}).

\subsubsection{Reaction-cross-diffusion systems}\label{sss:rds}
Our results further apply to (isoenergetic) population models generalising the two-species system for pattern formation by Shigesada, Kawasaki, and Teramoto (SKT). Reduction to the isoenergetic case is achieved by choosing $\mOns=0$ (see~\ref{eq:hp.Mnondeg}) and $u\equiv \uin$ to be spatially constant, which is consistent with the evolution law for the energy density $u$ if $\mOns=0$. Then, the given energy density $u=\uin\in\mathbb{R}^+$ can be regarded as a fixed system's parameter (in particular $\nabla u=0$) and one can write $h=h(c), A=A(c),R=R(c),$ and $\scrp=\scrp(c)$ etc.

The generalised SKT system as considered in~\cite{CDJ_2018} states 
\begin{equation}\label{eq:skt}\tag{\textsc{skt}}
	\begin{array}{rlcl}
		\partial_tc_i&\!\!=\, \nabla\cdot(A_{ij}(c)\nabla c_j)+R_i(c), &&\quad t>0,x\in\Om,\vspace{1mm}
		\\0&\!\!= \, A_{ij}(c)\nabla c_j\cdot\nu,&&\quad t>0,x\in\partial\Om,
	\end{array}
\end{equation}

\noindent where $A_{ij}(c) = \delta_{ij}p_i(c)+c_i\tfrac{\partial p_i}{\partial c_j}(c)$ for $i,j=1,\dots,n$
with $p_i(c)=a_{i0}+\sum_{k=1}^na_{ik}c_k^s$ for suitable $a_{il}\ge0$ and some $s>0$.
Under certain hypotheses, this system has a generalised gradient structure with
entropy density given by $h(c)=\sum_{i=1}^n\pi_i\lambda_s(c_i)$ for constants $\pi_i>0$ and $\lambda_1$ given by~\eqref{eq:deflb}, $\lambda_s(r)=\tfrac{r^s-sr}{s-1}+1$ for $s\not=1$.
Under a weak  cross-diffusion (wc) condition  (see eq.~(12) in~\cite{CDJ_2018}) or the detailed balance (db) condition  $\pi_ia_{ij}=\pi_ja_{ji}$ for all $i,j\in\{1,\dots, n\}$ together with $a_{i0}>0, a_{ii}>0$ for all $i$,
the matrix $\mathbb{M}(z)=A(z)(D^2h(z))^{-1}$ satisfies the non-degeneracy condition~\eqref{eq:Mnd}, i.e.\ $\mathbb{M}(c)\gtrsim_\iota I_n$ whenever $\min\{c_1,\dots, c_n\}\ge \iota$, see the explicit estimate in~\cite[Lemma~2.1]{CJ_2019} for $s=1$, and~\cite[Section~2]{CDJ_2018} for the general case under certain extra hypotheses. 
Observe that the detailed balance condition is equivalent to the symmetry of the mobility matrix $\mathbb{M}=A(D^2h)^{-1}$, and thus ensures the symmetry of the diffusive part $\mathbb{K}_{\mathrm{diff}}$ of the Onsager operator.
\begin{enumerate}[label=\alph*)]
	\item  Linear transition rates $s=1$. In this case, assuming (wc) or (db) with $a_{i0}>0, a_{ii}>0$,  one has 	
	$\scrp(c)\gtrsim\sum_{i=1}^n\big(|\nabla c_i|^2+|\nabla \sqrt{c_i}|^2\big)$ and $|A(c)\nabla c|\lesssim|c||\nabla c|$. 
	Thus, assumptions~\ref{it:hC3},~\ref{eq:hp.Mnondeg}--\ref{hp:grad.flux.control} on the entropy density and the mobility matrix are satisfied. (The more general weights $\pi_i>0$ in $h(c)$ as opposed to the unit weights in~\ref{it:hC3} do not impact the analysis.)
	We leave it to reader to verify that, by adapting the proof of Prop.~\ref{prop:edin} (see also~\cite[Prop.~5]{Fischer_2017}),
	the entropy dissipation inequality~\eqref{eq:edin} can be proved for renormalised solutions to this system, when assuming the regularity 
	\begin{align}\label{eq:reg.rds}
		c_i,\sqrt{c_i}\in L^2_\loc(I;H^1(\Om))\quad\text{for all }i.
	\end{align}
	Observe that this regularity is essentially equivalent to the assumption in Prop.~\ref{prop:edin} that $\scrp(z)\in L^1(\Om_T)$ for all $T<T^*$.
	Thus,   for~\eqref{eq:skt} with $s=1$ and reactions satisfying~\eqref{it:react},  Theorem~\ref{thm:wkuniq} yields the weak-strong uniqueness of renormalised solutions of the regularity~\eqref{eq:reg.rds}.
	A similar result has been obtained previously in~\cite[Theorem~1]{CJ_2019}.
	Here, we should caution that the regularity assumption  $c_i\in L^2_\loc(I;H^1(\Om))$ \emph{and}
	$\sqrt{c_i}\in L^2_\loc(I;H^1(\Om))$ is also needed in the proof of~\cite[Theorem~1]{CJ_2019}, is, however, incompletely stated in this theorem.
	Moreover, our result shows that hypothesis \enquote{(H2.iii)} in~\cite{CJ_2019} on the reactions can be dropped. 
	\item\label{it:gen.s} Nonlinear transition rates. An adjustment of our hypotheses further allows to treat system~\eqref{eq:skt} with superlinear transition rates $s\in(1,2]$.
	In this case, condition~\ref{hp:grad.flux.control} is no longer fulfilled, but with a suitable adjustment of the truncation function $\xi^*(c)$ in the definition of the modified relative entropy density $h^*_\rel(z,\tilde z)$ (given by~\eqref{eq:222}) our technique can still be applied. See Remark~\ref{rem:xi.superlin} for technical details.\smallskip
	\item[] For sublinear transition rates, $s<1$, the problem becomes more delicate since a direct analogue of condition~\ref{hp:grad.flux.control} is not available. When relying exclusively on entropy estimates for a priori bounds, even the construction of renormalised solutions, which to the author's knowledge is currently only available in the case of linear transition rates~\cite{CJ_2019_existence} (but likely to be extendable to the case $s\in[1,2]$), is open for small $s\in(0,1)$.
\end{enumerate}

\section{Weak-strong uniqueness principle}\label{sec:proofs}

In this section we will establish a stability estimate implying Theorem~\ref{thm:wkuniq}.
Throughout this section, we therefore assume the hypotheses of Theorem~\ref{thm:wkuniq}. Before turning to the actual proof in Subsection~\ref{ssec:stab.ineq}, we gather some technical auxiliary results.

\subsection{Truncation function \texorpdfstring{$\xi^*(z)$}{}}\label{ssec:def.xi*}
Let $\iota,B$ be fixed constants, $0<\iota<1<B<\infty$,   that are such that for all $i\in\{0,\dots,n\}$
\begin{equation}\label{eq:strbd}
	2\iota\le \tilde z_i\le B.
\end{equation}

\paragraph{\bf Different regimes/case distinction}

We henceforth abbreviate $|z|_1=\sum_{i=0}^nz_i$.
For sufficiently large but fixed parameters $E,N\ge2$  (to be chosen later) we decompose $[0,\infty)^{1+n}$ into the following three sets: 
\begin{equation}\label{eq:ABC}
	\begin{split}
		\mathcal{A}&=\{z\in[0,\infty)^{1+n}: |z|_1\le E\},
		\\\mathcal{B}&=\{z\in[0,\infty)^{1+n}: E<|z|_1< E^N\},
		\\\mathcal{C}&=\{z\in[0,\infty)^{1+n}: |z|_1\ge E^N\}.
	\end{split}
\end{equation}
A motivation for this decomposition is given in the introduction (see Section~\ref{ssec:technique}). Let us also mention that in the proof of Theorem~\ref{thm:wkuniq} the set $\mathcal{A}$ will be further decomposed into 
$\mathcal{A}_+$ and $\mathcal{A}_0$ (defined in~\eqref{eq:A+A0}).
The parameter $E$ will always be supposed to satisfy $E\ge 2B$. A finite number of further lower bounds on $E$ will be imposed later on. 

\paragraph{\bf Adjusted relative entropy}
We now define
\begin{equation}\label{eq:hrel}
	h_\rel^*(z,\tilde z) = h(z)-D_ih(\tilde z)(\xi^*(z)z_i-\tilde z_i)-h(\tilde z),
\end{equation}
where $\xi^*=\xi^{(E,N)}\in C^\infty([0,\infty)^{1+n})$, $0\le \xi^*\le 1$, is a truncation function subordinate to the above case distinction enjoying the following properties:
\begin{enumerate}[label=\textup{(t$\arabic*$)}]
	\item\qquad\label{it:xi*=1} $\xi^*(z)=1$ and $D^k\xi^*(z)=0$ for all $k\in\mathbb{N}_+$  \qquad if $z\in \mathcal{A}$,
	\item\qquad\label{it:xi*=0} $\xi^*(z)=0$  and $D^k\xi^*(z)=0$ for all $k\in\mathbb{N}_+$ \qquad if $z\in \mathcal{C}$,
\end{enumerate}
and 
\begin{enumerate}[label=\textup{(t$\arabic*$)},resume]
	\item\qquad \label{eq:Dxi*} $|D\xi^*(z)|\lesssim \tfrac{1}{N|z|_1},\quad |D^2\xi^*(z)|\lesssim \tfrac{1}{N(|z|_1)^2}$ 
	\qquad \;for all $z\in[0,\infty)^{1+n}.$
\end{enumerate}  
A function $\xi^*$ that has these properties can be obtained as follows:
let $\vartheta\in C^\infty_b(\mathbb{R})$ be non-increasing with $\vartheta(r)=1$ for $r\le0$ and $\vartheta(r)=0$ for $r\ge 1$ and define (cf.~\cite{Fischer_2017})
\begin{equation}\label{eq:phi}
	\xi^*(z)=\xi^{(E,N)}(z)=\vartheta\left(\frac{\log(|z|_1)-\log(E)}{\log(E^N)-\log(E)}\right).
\end{equation}
It is elementary to check that this choice satisfies the above properties. For instance, note that
$$|D\xi^{(E,N)}(z)|\lesssim\tfrac{1}{N|z|_1}\cdot\tfrac{1}{\log(E)}$$
if $N\ge 2$.

\begin{remark}[Superlinear transition rates]\label{rem:xi.superlin}
	When dealing with (isoenergetic) reaction-cross-diffusion population systems with \emph{superlinear} transition rates $s\in(1,2]$ for concentrations $c_1,\dots, c_n$ (see Example~\ref{it:gen.s} in Sec.~\ref{sss:rds}),
	the decay property~\ref{eq:Dxi*} is no longer sufficient, and 
	the above choice of $\xi^*$ should be replaced by 
	\begin{equation}\label{eq:phi.s>1}
		\xi^*(c):=\xi^{(E,N,s)}(c):=\vartheta\left(\frac{\log(\rho_s^s(c))-\log(E^s)}{\log(E^{sN})-\log(E^s)}\right) = \vartheta\left(\frac{\log(\rho_s(c))-\log(E)}{\log(E^{N})-\log(E)}\right),
	\end{equation}
	where $\vartheta$ is as before, and 
	\begin{align}
		\rho_s(c):=\Big(\sum_{i=1}^n(c_i+\delta)^s\Big)^\frac{1}{s}
	\end{align}
	with $\delta=\delta(s)=1$ for $s\in (1,2)$ and $\delta(s)=0$ for $s=2$.
	The parameter $\delta\in\{0,1\}$ ensures the smoothness of $\rho_s^s$ and thus of $\xi^*$ .
	
	Introducing the regimes $\mathcal{A}^{(s)}=\{\rho_s(c)\le E\},$ $\mathcal{B}^{(s)}=\{E<\rho_s(c)< E^N\}$, $\mathcal{C}^{(s)}=\{\rho_s(c)\ge E^N\}$,
	properties~\ref{it:xi*=1} and~\ref{it:xi*=0} remain valid with $\mathcal{A},$ $\mathcal{C}$  replaced by $\mathcal{A}^{(s)}$ and $\mathcal{C}^{(s)}$, respectively.
	Derivatives of $\xi^*$  given by~\eqref{eq:phi.s>1} enjoy the following decay properties,
	specifically adapted to the problem at hand,
	\begin{equation}\label{eq:Dxi*.s>1}
		|D_i\xi^*(c)|\lesssim \tfrac{(c_i+\delta)^{s-1}}{N|c|_s^s},\qquad |D_{ik}\xi^*(c)|\lesssim \tfrac{(c_i+\delta)^{s-1}(c_k+\delta)^{s-1}}{N|c|_s^{2s}}+\tfrac{(c_i+\delta)^{s-2}}{N|c|_s^s}\delta_{ik},
	\end{equation}
	where $\delta_{ik}$ denotes the Kronecker delta.
	As will become clear in Section~\ref{ssec:stab.ineq}, if $c\in \mathcal{A}^{(s)}$, one can simply follow the reasoning in the proof of Theorem~\ref{thm:wkuniq} using the fact that~\eqref{eq:skt} satisfies (under reasonable hypotheses) a non-degeneracy condition analogous to~\eqref{eq:Mnd}.
	
	If $c\in \mathcal{B}^{(s)}$, we need an analogue of assumption~\ref{hp:grad.flux.control} to be able to absorb terms without a good sign involving products of gradients of the renormalised solution by the entropy dissipation.
	Typical systems~\eqref{eq:skt} satisfy the coercivity bound (cf.~\cite{CDJ_2018})
	\begin{align}\label{eq:221}
		\scrp(c)\gtrsim\sum_{i=1}^n|\nabla c_i^s|^2+\sum_{i=1}^n|\nabla c_i^{s/2}|^2.
	\end{align}
	Confining to systems~\eqref{eq:skt} enjoying this bound, a suitable generalisation of hp.~\ref{hp:grad.flux.control} (in the isoenergetic case) that is satisfied by such systems is
	\begin{align}\label{eq:sqgrad.skt}
		\chi_{\{|c|_s\ge 1\}}\cdot	\big(c_i^{s-1}+c_i^{s/2	-1}\big)|A_{ij}(c)\nabla c_j|\lesssim |c|^{s}_s\sqrt{\scrp(c)}.
	\end{align}
	Observe that since $s\in(1,2]$, the factor $\big(c_i^{s-1}+c_i^{s/2-1}\big)$ in this condition can be equivalently replaced by $\big((c_i+\delta)^{s-1}+c_i^{s/2-1}\big)$.
	Using the above model bounds and decay properties of derivatives of $\xi^*$, one can verify the estimate
	\begin{equation}\label{eq:223}
		\begin{split}
			\chi_{\{|c|_s\ge 1\}}| \nabla D_i(\xi^*(c)c_l)||A_{ij}(c)\nabla c_j|
			\lesssim \tfrac{1}{N}\scrp(c),
		\end{split}
	\end{equation}
	which allows to deal with the case $c\in \mathcal{B}^{(s)}$ (cf.\ ineq.~\eqref{eq:ineq.B} in the proof of Thm~\ref{thm:wkuniq}).
	When verifying this bound, one uses the fact that 
	$|\nabla c_l|\lesssim|\nabla c_l^s|+|\nabla c_l^{s/2}|$, which holds true since $s\in(1,2]$.
	
	If $c\in \mathcal{C}^{(s)}$, one can follow the reasoning in the proof of Thm~\ref{thm:wkuniq}.
	
	Thus, for models with superlinear transition rates $s\in(1,2]$ that satisfy~\eqref{eq:221}, weak-strong uniqueness is obtained by adapting the proof of Theorem~\ref{thm:wkuniq} as sketched above. Here, we also use the fact that thanks to the locally uniform convexity of the entropy density associated with~\eqref{eq:skt} (see Sec.~\ref{sss:rds}) an analogue of Prop.~\ref{prop:coerc.hrel} is immediate.
\end{remark}

\subsection{Coercivity properties of the generalised distance}

We henceforth let $\en(s)=\tfrac{1}{2}s^2$,
\begin{equation}\label{eq:defgrel}
	\en_\rel(u,\tilde u):=\en(u)-\en'(\tilde u)(u-\tilde u)-\en(\tilde u)=\tfrac{1}{2}|u-\tilde u|^2,
\end{equation}
and define for $\alpha\in(0,\infty)$
\begin{equation}\label{eq:defdist}
	\dist_\alpha(z,\tilde z)=h_\rel^*(z,\tilde z)+\alpha\en_\rel(u,\tilde u).
\end{equation}

For the following assertion we recall that $z\in\mathcal{A}$ if and only if $\sum_{i=0}^nz_i\le E$. 
\begin{proposition}[Coercivity properties]\label{prop:coerc.hrel}
	Recall that $h(z)=h(u,c)$ is given by~\eqref{eq:S0},~\eqref{eq:h2}, and that $\tilde z$ satisfies~\eqref{eq:strbd}.
	For any $E\in[1,\infty)$, we have\footnote{We recall that any dependence of estimates on $\iota$ and $B$ (see~\eqref{eq:strbd}) will usually not be indicated explicitly.}
	\begin{equation}\label{eq:coercA}
		\begin{split}
			|z-\tilde z|^2\lesssim_E h_\rel^*(z,\tilde z)\quad \text{if }z\in\mathcal{A}.
		\end{split}
	\end{equation}
	There exists $\underline E=\underline E(\tilde z)<\infty$
	such that for any $E\ge\underline{E}$ and any $\alpha\ge1$
	\begin{equation}\label{eq:coercAc}
		\begin{split}
			h_\rel^*(z,\tilde z)+\alpha\en_\rel(u,\tilde u)\ge  \epsilon\bigg(\sum_{i=1}^n c_i\log_+ c_i+ u^2\bigg)+1
			\quad \text{if }z\in\mathcal{A}^c,
		\end{split}
	\end{equation}
	where $\epsilon>0$ is a positive constant only depending on model parameters.
	
	For any $\alpha\in(0,1]$ there exists $\underline E=\underline E(\alpha,\tilde z)<\infty$
	such that for any $E\ge\underline{E}$
	\begin{equation}\label{eq:coercAc01}
		\begin{split}
			h_\rel^*(z,\tilde z)+\alpha\en_\rel(u,\tilde u)\ge  \epsilon\bigg(\sum_{i=1}^n c_i\log_+ c_i+\alpha\, u^2\bigg)+1
			\quad \text{if }z\in\mathcal{A}^c,
		\end{split}
	\end{equation}
	where $\epsilon>0$ is a positive constant only depending on model parameters.
\end{proposition}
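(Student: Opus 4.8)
The plan is to prove the three coercivity estimates in turn, exploiting the explicit form~\eqref{eq:S0} of the entropy density together with the structural hypotheses~\eqref{eq:h2} and the bounds~\eqref{eq:strbd} on the strong solution $\tilde z$.

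\textbf{Estimate~\eqref{eq:coercA} on $\mathcal{A}$.} On $\mathcal{A}$ we have $\xi^*(z)=1$ by~\ref{it:xi*=1}, so $h_\rel^*(z,\tilde z)$ coincides with the classical relative entropy density $h_\rel(z,\tilde z)=h(z)-D_ih(\tilde z)(z_i-\tilde z_i)-h(\tilde z)$, i.e.\ the second-order Taylor remainder of $h$ at $\tilde z$. By Taylor's theorem it equals $\tfrac12 (z-\tilde z)^T D^2h(\zeta)(z-\tilde z)$ for some $\zeta$ on the segment $[\tilde z,z]$. First I would verify from~\eqref{eq:S0} and~\eqref{eq:h2} that $D^2h$ is positive definite on compact subsets of $(0,\infty)^{1+n}$, with a lower bound $D^2h(\zeta)\ge\kappa(K)\,\mathbb{I}$ on $\{\min_i\zeta_i\ge\iota,\ |\zeta|_1\le K\}$; this is where the strict concavity of $\hat\sigma$, the boundedness of $\hat\sigma''$ away from zero on bounded sets, and the structure of the $\lambda(c_i)-c_i\log w_i(u)$ terms enter. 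The subtlety is that $z$ may have small components, so the segment can approach the boundary of the positive orthant. However, along the $c_i$-directions $\lambda''(c_i)=1/c_i\ge 1/E$ keeps the Hessian coercive for $c_i$ small, and one handles the remaining cross terms (involving $w_i'(u)/w_i(u)$) using $w_i(0)>0$ and the concavity of $w_i$ to see that the potentially singular contributions only improve the lower bound. Combining these, $h_\rel(z,\tilde z)\gtrsim_E |z-\tilde z|^2$ on $\mathcal{A}$. (Alternatively one argues by contradiction/compactness: the ratio $h_\rel/|z-\tilde z|^2$ is bounded below on the compact set $\overline{\mathcal{A}}\cap\{|z-\tilde z|\ge\eta\}$, and near $z=\tilde z$ one uses the nondegenerate Hessian at the interior point $\tilde z$.)

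\textbf{Estimates~\eqref{eq:coercAc} and~\eqref{eq:coercAc01} on $\mathcal{A}^c$.} Here $|z|_1>E$, so I would show that $h_\rel^*(z,\tilde z)+\alpha g_\rel(u,\tilde u)$ grows at least like $\epsilon(\sum_i c_i\log_+c_i + u^2)+1$ once $E$ is large. Expanding using~\eqref{eq:S0}: $h_\rel^*(z,\tilde z) = -\hat\sigma(u) + \sum_i(\lambda(c_i)-c_i\log w_i(u)) - D_ih(\tilde z)(\xi^*(z)z_i-\tilde z_i) - h(\tilde z)$. The dominant growing terms are $\sum_i\lambda(c_i)\sim\sum_i c_i\log c_i$, which already dominates $\sum_i c_i\log_+c_i$ for large $c_i$; the terms $-c_i\log w_i(u)$ are controlled using $w_i(u)\lesssim 1+u^\beta$ from~\eqref{eq:h2}, so $-c_i\log w_i(u)\ge -\beta c_i\log(1+u)-C$, which is absorbed by a fraction of $\lambda(c_i)$ plus a fraction of $u^2$ (since $c_i\log(1+u)\le \epsilon c_i\log c_i + C\epsilon^{-1}(\log(1+u))^2 \ll c_i\log_+c_i + u^2$ for $|z|_1$ large, splitting according to whether $c_i$ or $u$ is the large coordinate). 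The term $-\hat\sigma(u)$ is bounded below (since $\hat\sigma$ is concave, nondecreasing, sublinear, so $\hat\sigma(u)\lesssim 1+u$, hence $-\hat\sigma(u)\ge -C(1+u)$), and this linear-in-$u$ loss is absorbed by $\alpha g_\rel(u,\tilde u)=\tfrac\alpha2|u-\tilde u|^2\gtrsim\alpha u^2 - C_\alpha$. The linear terms $-D_ih(\tilde z)\xi^*(z)z_i$ are bounded since $D_ih(\tilde z)$ is bounded (by~\eqref{eq:strbd} the values $\tilde z_i$ lie in $[2\iota,B]$, so $D_ih(\tilde z)$ lies in a compact set) and $\xi^*$ is supported in $\{|z|_1\le E^N\}$ — actually more care is needed: on $\mathcal{B}$, $\xi^* z_i$ can be as large as $E^N$, so $-D_ih(\tilde z)\xi^*(z)z_i$ is bounded by $CE^N$, a constant depending on $E$; this is fine since we only need the bound for a fixed (large) $E$ and the growing quadratic/entropic terms eventually dominate. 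Choosing $E=\underline E(\tilde z)$ (resp.\ $\underline E(\alpha,\tilde z)$ for the $\alpha\le 1$ case, where the $u^2$ coefficient degenerates with $\alpha$ and one needs $E$ larger to beat the linear-in-$u$ loss) large enough makes the left side exceed $\epsilon(\sum c_i\log_+c_i + u^2)+1$ (resp.\ with $\alpha u^2$).

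\textbf{Main obstacle.} The delicate point is the interplay between the linear-in-$u$ negative contributions (from $-\hat\sigma(u)$ and especially from $\sum_i c_i\log w_i(u)$, which couples the large-$c$ and large-$u$ regimes) and the fact that on $\mathcal{B}$ the adjusted term $-D_ih(\tilde z)\xi^*(z)z_i$ is only $O(E^N)$ rather than genuinely controlled by the entropy — one must be careful to absorb everything into $\epsilon(\sum c_i\log_+c_i + u^2)$ plus an $E$-dependent additive constant, and then note that on $\mathcal{A}^c$ the entropic/quadratic part itself is $\gtrsim \epsilon' E\log E$, which exceeds any fixed constant once $E$ is large; this is exactly what forces the dependence $\underline E=\underline E(\tilde z)$ (and $\underline E(\alpha,\tilde z)$ when $\alpha\le 1$). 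The case $\alpha\le 1$ is strictly harder than $\alpha\ge 1$ only because the energy penalty $\tfrac\alpha2|u-\tilde u|^2$ is weak, so the linear-in-$u$ losses must instead be partly absorbed by the $c_i\log w_i(u)$-versus-$c_i\log c_i$ trade-off and by taking $E$ correspondingly larger — I would handle $\alpha\ge 1$ first and then indicate the modifications.
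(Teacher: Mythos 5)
Your overall plan matches the paper's: for~\eqref{eq:coercA} use that $\xi^*\equiv1$ on $\mathcal A$ so $h^*_{\mathrm{rel}}$ is the classical second-order remainder, then invoke locally uniform convexity of $h$; for~\eqref{eq:coercAc}--\eqref{eq:coercAc01}, combine a lower bound on $h(z)$ of the type in Lemma~\ref{l:S} with $\alpha\en_\rel(u,\tilde u)\gtrsim\alpha u^2-C$ and absorb the remaining linear-in-$|z|_1$ losses by taking $E$ large. (As a side note, the compactness argument you give in brackets for~\eqref{eq:coercA} is a cleaner route: it only needs positive definiteness of $D^2h$ at the interior point $\tilde z$ plus $h_\rel>0$ for $z\neq\tilde z$, thereby avoiding the delicate question of whether $D^2h\ge\epsilon_0(E)\mathbb I$ holds uniformly as $z$ approaches the boundary of $[0,\infty)^{1+n}$; the paper simply cites \cite{MM_2018} for the latter.)

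There is, however, a genuine gap in your handling of the truncated linear term on $\mathcal A^c$. You bound $-D_ih(\tilde z)\,\xi^*(z)z_i\gtrsim -CE^N$ by observing that $\xi^*$ is supported in $\{|z|_1\le E^N\}$, and then assert that the growing entropic/quadratic terms, which on $\mathcal A^c$ are $\gtrsim E\log E$, dominate this constant once $E$ is large. But $E\log E\ll E^N$ for $N\ge2$, so at the inner boundary of $\mathcal A^c$ (where $|z|_1$ is only slightly larger than $E$) the entropic part does \emph{not} beat $CE^N$, and the claimed absorption fails. The correct and in fact much simpler bound is $|D_ih(\tilde z)\,\xi^*(z)z_i|\le\big(\max_i|D_ih(\tilde z)|\big)\,|z|_1$, using only $0\le\xi^*\le 1$ together with~\eqref{eq:strbd}; this is linear in $|z|_1$ and is therefore dominated by the entropic/quadratic part on all of $\mathcal A^c$ once $E$ is large enough. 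This is precisely what the paper's inequality~\eqref{eq:103} does, bounding the truncated term by $-C\sum_iz_i$.

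A secondary issue: the claimed Young-type estimate $c_i\log(1+u)\le\epsilon\, c_i\log c_i+C\epsilon^{-1}(\log(1+u))^2$ is false as stated --- take $c_i\sim u\to\infty$: the left side is $\sim u\log u$ while the right side is $\sim\epsilon\,u\log u+C\epsilon^{-1}(\log u)^2<u\log u$ for $\epsilon<1$. The paper's Lemma~\ref{l:S} instead splits the domain into $\{w_i(u)\le c_i^{\beta_*}\}$ and its complement for some $\beta_*\in(\beta,1)$, which yields $c_i\log w_i(u)\le\beta_*c_i\log c_i+Cu^{\kappa_\beta}+C$ with $\kappa_\beta<1$, and this sublinear power of $u$ is then what is absorbed; your intuition that the cross term is absorbable is correct, but the specific inequality invoked to do it is not.
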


\begin{remark}
	Note that as long as  $\alpha\in[1,\infty)$, the lower bound $\underline E$ can be chosen independently of $\alpha$. This property is essential for obtaining the stability estimate~\eqref{eq:dist.ineq} in the case where~\ref{hp:grad.flux.control} is not satisfied (including model~\Mone{} in case~\ref{it:M1.ren}, where $\kappa_{1,i}=0$; cf.\ Section~\ref{ssec:examples}).
\end{remark}

\begin{proof}[Proof of Prop.~\ref{prop:coerc.hrel}]
	For the first assertion, we note that, as can be seen from the proof of~\cite[Prop.~2.1]{MM_2018}, the entropy density $h$ is locally uniformly convex
	on $[0,\infty)^{1+n}$ with 
	$D_{ij}h(z)\ge \epsilon_0(E)\delta_{ij}$ if $|z|_1\le E$.
	Since, by construction,  $\xi^*(z)=1$ whenever $z\in\mathcal{A}$, we thus infer $$h_\rel^*(z,\tilde z)=h(z)-D_ih(\tilde z)(z-\tilde z)_i-h(\tilde z)\gtrsim_E|z-\tilde z|^2.$$
	
	Let us now turn to assertions~\eqref{eq:coercAc} and~\eqref{eq:coercAc01}. Since $\tilde u\le B$, we have the bound $|u-\tilde u|^2\ge \tfrac{1}{4}u^2\chi_{\{u\ge 2B\}}$. By Lemma~\ref{l:S},  
	we further have for some $\nu\in[0,1)$ and some positive constant $\epsilon>0$
	\begin{equation*}
		h(z)\ge \epsilon\sum_{i=1}^n c_i\log_+ c_i-C(\hat\sigma_+(u)+u^\nu)- C.
	\end{equation*}
	Hence, 
	\begin{align}\label{eq:103}
		h_\rel^*(z,\tilde z)+\alpha g_\rel(u,\tilde u)
		&\ge \epsilon\sum_{i=1}^n c_i\log_+ c_i+\alpha u^2\chi_{\{u\ge 2B\}}-C\sum_{i=0}^n z_i-C.
	\end{align}
	Inequality~\eqref{eq:coercAc} is now immediate since $\sum_{i=1}^nc_i\log(c_i)+u^2$ dominates $\sum_{0=1}^nz_i$ for $|z|_1\ge \underline{E}$ whenever $\underline{E}$ is large enough. Observe that the lower bound on $\underline E$ can be chosen independently of $\alpha\in[1,\infty)$.
	
	Let now $\alpha\in(0,1]$ be given. Inequality~\eqref{eq:103} shows that if $\underline{E}=\underline{E}(\alpha)$ is large enough, we obtain~\eqref{eq:coercAc01} if $E\ge\underline{E}$.
\end{proof}

\subsection{Evolution inequality for the  generalised distance}
We will abbreviate 
\begin{align}
	H_\rel^*(z,\tilde z):=\int_\Om \,h_\rel^*(z,\tilde z)\,\dd x,\qquad
	\En_\rel(u,\tilde u):=\int_\Om\en_\rel(u,\tilde u)\,\dd x.
\end{align}
In this subsection, we exploit  the evolution laws satisfied by the
dissipative  renormalised solution $z$ and the strong solution $\tilde z$ of Theorem~\ref{thm:wkuniq} to derive an evolution  inequality for the generalised distance 
\begin{equation}\label{eq:600}
	\Dist_\alpha(z,\tilde z) := \int_\Om\dist_\alpha(z,\tilde z)
	\,\dd x= H_\rel^*(z,\tilde z)+\alpha\,\En_\rel(u,\tilde u),
\end{equation}
where $\alpha\in(0,\infty)$ is a suitably chosen weight (to be specified in Section~\ref{ssec:stab.ineq}).

We recall that $I:=[0,T^*)$. Furthermore,
we note that since $\partial_t\tilde z\in L^\infty(\Om_T)$ for any $T<T^*$, we can integrate by parts with respect to time in the weak formulation~\eqref{eq:weakform} satisfied by the strong solution~$\tilde z$ to find \begin{equation}\label{eq:weakformStrong}
	\begin{split}
		\int_0^T\!\!\int_\Om \partial_t\tilde z_i\psi_i\,\dd x\dd t
		= -\int_0^T\!\!\int_\Om A_{ij}(\tilde z)\nabla \tilde z_j\cdot\nabla\psi_i\,\dd x\dd t+\int_0^T\!\!\int_\Om R_i(\tilde z)\psi_i\,\dd x\dd t.
	\end{split}
\end{equation}
By a density argument, one can see that eq.~\eqref{eq:weakformStrong} holds true for all $\psi\in L^1_\loc(I;W^{1,1}(\Om))$.

We consider separately the two quantities $H_\rel^*(z,\tilde z)$ and $\En_\rel(u,\tilde u)$ appearing in~\eqref{eq:600}, beginning with the former.
\begin{lemma}[Evolution of the entropic part]\label{l:evol.hrel}
	For a.e.\ $T<T^*$ one has 
	\begin{equation}\label{eq:117}
		\begin{split}
			\eval{H_\rel^*(z,\tilde z)}_{t=0}^{t=T} &\le  \int_0^T\!\!\int_\Om \;\rho^{(h)}\;\dd x\dd t,
		\end{split}
	\end{equation}
	where 
	\begin{equation}\label{eq:126'b}
		\begin{split}
			\rho^{(h)} &:=-\nabla D_{i}h(z)\cdot\mathbb{M}_{il}(z)\nabla D_{l}h(z)
			\\&\qquad + \nabla (D_i(\xi^*(z)z_j)D_jh(\tilde z) )\cdot \mathbb{M}_{il}(z)\nabla D_lh(z)
			\\&\qquad + \nabla\Big(D_{ij}h(\tilde z)(\xi^*(z)z_j-\tilde z_j)\Big)\cdot 
			\mathbb{M}_{il}(\tilde z)\nabla D_lh(\tilde z)
			\\&\qquad- D_{ij}h(\tilde z)(\xi^*(z)z_j-{\tilde z}_j) R_i(\tilde z)
			\\&\qquad+ (D_ih(z)-D_jh(\tilde z)D_i(\xi^*(z)z_j)) R_i(z).
		\end{split}
	\end{equation}
	
\end{lemma}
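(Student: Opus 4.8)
The strategy is to compute $\tfrac{\dd}{\dd t}H^*_\rel(z,\tilde z)$ by differentiating the three contributions in $h^*_\rel(z,\tilde z) = h(z) - D_ih(\tilde z)(\xi^*(z)z_i - \tilde z_i) - h(\tilde z)$ separately, using the renormalised formulation~\eqref{eq:118..} for the terms involving the weak solution $z$ and the (time-integrated-by-parts) weak formulation~\eqref{eq:weakformStrong} for the terms involving the strong solution $\tilde z$. The inequality (rather than equality) in~\eqref{eq:117} will come precisely from the entropy dissipation inequality~\eqref{eq:edin}, which controls $\eval{H(z)}_{0}^{T}$ from above by $-\int\!\!\int\scrp(z) + \int\!\!\int R_ih(z)D_ih(z)$; all other pieces will be handled by exact identities.

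First I would treat $\eval{H(z(t))}_{t=0}^{t=T}$: by~\eqref{eq:edin}, this is $\le -\int_0^T\!\!\int_\Om \scrp(z)\,\dd x\dd t + \int_0^T\!\!\int_\Om D_ih(z)R_i(z)\,\dd x\dd t$, and $\scrp(z) = \nabla D_ih(z)\cdot\mathbb{M}_{il}(z)\nabla D_lh(z)$ is the first line of $\rho^{(h)}$ while $D_ih(z)R_i(z)$ is part of the last line. Second, I would handle the mixed term $-\eval{\int_\Om D_ih(\tilde z)\xi^*(z)z_i\,\dd x}$: since $\tilde z$ is Lipschitz and bounded below, $\zeta \mapsto \psi := D_ih(\tilde z)\cdot(\text{something})$ is an admissible test function, but more directly one applies the renormalised formulation with truncation $\xi(z) = D_i(\xi^*(z)z_j)\cdots$ — concretely, one uses that $\tfrac{\dd}{\dd t}\int_\Om D_jh(\tilde z)\xi^*(z)z_j\,\dd x$ splits into a term where the time derivative hits $D_jh(\tilde z)$ (handled via the chain rule and $\partial_t\tilde z$ from~\eqref{eq:weakformStrong}) and a term where it hits $\xi^*(z)z_j$ (handled via~\eqref{eq:118..} applied with the test-"truncation" $\zeta \mapsto \xi^*(\zeta)\zeta_j$ and test function $\psi = D_jh(\tilde z)$, legitimate by Remark~\ref{rem:testfk} since $\nabla D_jh(\tilde z) \in L^2$ and $\partial_t D_jh(\tilde z)\in L^\infty$). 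This produces the second line of $\rho^{(h)}$ (diffusive cross term, with the gradient landing on $D_i(\xi^*(z)z_j)D_jh(\tilde z)$ against $\mathbb{M}_{il}(z)\nabla D_lh(z)$) and the $-D_jh(\tilde z)D_i(\xi^*(z)z_j)R_i(z)$ piece completing the last line. Third, $-\eval{\int_\Om D_ih(\tilde z)(-\tilde z_i) + h(\tilde z)\,\dd x} = \eval{\int_\Om(D_ih(\tilde z)\tilde z_i - h(\tilde z))\,\dd x}$: differentiating and using $\partial_t(D_ih(\tilde z)\tilde z_i - h(\tilde z)) = D_{ij}h(\tilde z)\tilde z_i\partial_t\tilde z_j + D_ih(\tilde z)\partial_t\tilde z_i - D_ih(\tilde z)\partial_t\tilde z_i = D_{ij}h(\tilde z)\tilde z_j\partial_t\tilde z_i$, then substituting $\partial_t\tilde z_i$ from~\eqref{eq:weakformStrong} with the test function built from $D_{ij}h(\tilde z)\tilde z_j$, yields the parts of lines three and four of $\rho^{(h)}$ carrying the $-\tilde z_j$ (combining with the $\xi^*(z)z_j$ pieces from the chain-rule term on $D_jh(\tilde z)$ in step two to form the full $\xi^*(z)z_j - \tilde z_j$ combinations).

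The remaining step is bookkeeping: collect the diffusive flux terms $\nabla(D_{ij}h(\tilde z)(\xi^*(z)z_j - \tilde z_j))\cdot\mathbb{M}_{il}(\tilde z)\nabla D_lh(\tilde z)$ (third line) from the $\tilde z$-side formulations, the reaction terms $-D_{ij}h(\tilde z)(\xi^*(z)z_j - \tilde z_j)R_i(\tilde z)$ (fourth line), and verify the signs match~\eqref{eq:126'b}. One must be slightly careful that the "symbolic" notation $\mathbb{M}_{il}(z)\nabla D_lh(z)$ stands for $A_{ik}(z)\nabla z_k$ (Remark~\ref{rem:con.M}), and that the integrability needed to justify each pairing comes from the renormalised-solution regularity~\eqref{eq:zreg} together with the cutoff $\xi^*$ being compactly supported in $|z|_1$: on $\supp\xi^*$ the flux $A_{ik}(z)\nabla z_k$ is $L^2$, and $D^k\xi^*, D^k h(\tilde z)$ are bounded there. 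The main obstacle is the rigorous justification — rather than the formal computation — that $\xi^*(z)z_j$ (and its product with the smooth bounded $D_jh(\tilde z)$) is an admissible renormalising function/test pair for Definition~\ref{def:renorm}: since $D\xi^*$ has compact support but $z\mapsto\xi^*(z)z_j$ does not, one works with $\xi(z)=\xi^*(z)z_j$ only up to the scale $E^N$, using that $D_i\xi(z) = \xi^*(z)\delta_{ij} + z_jD_i\xi^*(z)$ and $D_{ik}\xi(z) = \delta_{ij}D_k\xi^*(z) + \delta_{kj}D_i\xi^*(z) + z_jD_{ik}\xi^*(z)$ are all bounded and compactly supported in the region where they are nonconstant, so $\xi$ differs from the linear function $z_j$ only on the bounded set $\mathcal A\cup\mathcal B$ and the formulation~\eqref{eq:118..} applies after the extension in Remark~\ref{rem:testfk} (noting $D\xi$ compactly supported suffices). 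Handling the $s=0$ endpoint (so that $H^*_\rel(z(0),\tilde z) = 0$) uses the convention on $\eval{\cdot}_{t=0}^{t=T}$ together with $h(\zin)\in L^1(\Om)$ and $\uin\in L^2(\Om)$, and is routine once the a.e.-$T$ statement is established.
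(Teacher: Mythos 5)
Your proposal is correct and follows essentially the same route as the paper's proof: split $h_\rel^*$ into its three parts, apply~\eqref{eq:edin} to $H(z)$, apply the renormalised formulation with truncation $\xi(z)=\xi^*(z)z_j$ and test function $\psi=D_jh(\tilde z)$, and substitute $\partial_t\tilde z$ from~\eqref{eq:weakformStrong} with test functions built from $D_{ij}h(\tilde z)\xi^*(z)z_j$ and $D_{ij}h(\tilde z)\tilde z_j$. One small slip in your final remarks: $\xi(z)=\xi^*(z)z_j$ differs from the linear function $z_j$ on $\mathcal B\cup\mathcal C$ (not $\mathcal A\cup\mathcal B$), but this does not affect the argument, since the relevant admissibility criterion is that $D\xi(z)=\xi^*(z)\delta_{ij}+z_jD_i\xi^*(z)$ has compact support, which you verify directly.
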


\begin{proof}[Proof of Lemma~\ref{l:evol.hrel}]
	The subsequent observations apply to a.e.\ $T<T^*$.
	
	We write 
	\begin{equation*}
		H_\rel^*(z,\tilde z) = H(z) - \int_\Om D_ih(\tilde z)\xi^*(z)z_i\,\dd x 
		+\int_\Om (D_ih(\tilde z)\tilde z_i -h(\tilde z))\,\dd x.
	\end{equation*}
	For the first term on the RHS we use the fact that, by hypothesis, $z$ satisfies~\eqref{eq:edin}, i.e.\ 
	\begin{equation*}\begin{split}
			\eval{H(z)}_{t=0}^{t=T} &\le  -\int_0^T\!\!\int_\Om \nabla D_{i}h(z)\cdot\mathbb{M}_{il}(z)\nabla D_{l}h(z)\,\dd x\dd t
			+\int_0^T\!\!\int_\Om D_ih(z) R_i(z)\,\dd x\dd t.
		\end{split}
	\end{equation*}
	For the second term, we want to use the fact that $z$ satisfies the renormalised formulations~\eqref{eq:118} and~\eqref{eq:118..} with the truncation function $\xi(z)=\xi^*(z)z_j$ and the test function $\psi=D_jh(\tilde z)\in W^{1,\infty}(\Om_T)$. (For the admissibility of this choice, see Remark~\ref{rem:testfk}.)
	Inserting these choices in eq.~\eqref{eq:118..}, we obtain
	\begin{equation}\label{eq:119}
		\begin{split}
			-\eval{\int_\Om D_jh(\tilde z)\xi^*(z)z_j\,\dd x}_{t=0}^{t=T}&+\int_0^T\!\!\int_\Om \xi^*(z)z_j\tfrac{\dd}{\dd t}D_jh(\tilde z)\,\dd x\dd t
			\\& =
			\int_0^T\!\!\int_\Om \nabla (D_i(\xi^*(z)z_j)D_jh(\tilde z) )\cdot\mathbb{M}_{il}(z)\nabla D_lh(z)\,\dd x\dd t 
			\\&\quad-\int_0^T\!\!\int_\Om D_i(\xi^*(z)z_j)R_i(z)D_jh(\tilde z)\,\dd x\dd t.
		\end{split}
	\end{equation}
	We next rewrite the second term on the LHS choosing in the weak form~\eqref{eq:weakformStrong} for $\tilde z$ the test function $\psi:=D_{ij}h(\tilde z)\xi^*(z)z_j$.
	This yields
	\begin{equation}\label{eq:119t}
		\begin{split}
			\int_0^T\!\!\int_\Om D_{ij}h(\tilde z)\xi^*(z)z_j\partial_t{\tilde z}_i\,\dd x\dd t
			&=-\int_0^T\!\!\int_\Om \nabla\big(D_{ij}h(\tilde z)\xi^*(z)z_j)\cdot 
			\mathbb{M}_{il}(\tilde z)\nabla D_lh(\tilde z)\,\dd x\dd t
			\\&\qquad +\int_0^T\!\!\int_\Om D_{ij}h(\tilde z)\xi^*(z)z_j R_i(\tilde z)\,\dd x\dd t.
		\end{split}
	\end{equation}
	Observe that  
	since $\chi_{\{|z|\le E\}}\nabla z\in L^2_\loc(I;L^2(\Om))$ for any $E<\infty$, the function $\psi:=D_{ij}h(\tilde z)\xi^*(z)z_j\in L^2_\loc(I;H^1(\Om))$ is indeed admissible in the weak equation~\eqref{eq:weakformStrong} for~$\tilde z$.
	
	We finally need to determine the evolution of the term
	\begin{equation}\label{eq:120}
		\begin{split}
			\int_\Om (D_ih(\tilde z)\tilde z_i -h(\tilde z))\,\dd x.
		\end{split}
	\end{equation}
	To this end, note that thanks to the regularity of $\tilde z$, 
	\begin{equation}\label{eq:121}
		\begin{split}
			\eval{\int_\Om h(\tilde z)\,\dd x}_{t=0}^{t=T} &= \int_0^T\!\!\int_\Om D_ih(\tilde z)\partial_t{\tilde z}_i\,\dd x\dd t,
			\\\eval{\int_\Om D_ih(\tilde z)\tilde z_i\,\dd x}_{t=0}^{t=T} &= \int_0^T\!\!\int_\Om D_ih(\tilde z)\partial_t{\tilde z}_i\,\dd x\dd t
			+\int_0^T\!\!\int_\Om \big(\tfrac{\dd}{\dd t}D_ih(\tilde z)\big){\tilde z}_i\,\dd x\dd t.
		\end{split}
	\end{equation}
	Subtracting the first from the second equality then yields
	\begin{align}\label{eq:122}
		\eval{\int_\Om \big(D_ih(\tilde z)\tilde z_i-h(\tilde z)\big)\,\dd x}_{t=0}^{t=T} &
		=\int_0^T\!\!\int_\Om D_{ij}h(\tilde z){\tilde z}_i\partial_t {\tilde z}_j\,\dd x\dd t
		\\&=\int_0^T\!\!\int_\Om D_{ij}h(\tilde z){\tilde z}_j\partial_t {\tilde z}_i\,\dd x\dd t
		\\&=-\int_0^T\!\!\int_\Om \nabla\big(D_{ij}h(\tilde z){\tilde z}_j\big)\cdot\mathbb{M}_{il}(\tilde z)\nabla D_lh(\tilde z)\,\dd x\dd t
		\\&\qquad+\int_0^T\!\!\int_\Om D_{ij}h(\tilde z){\tilde z}_j R_i(\tilde z)\,\dd x\dd t,
	\end{align}
	where in the second step we have used the symmetry of the Hessian of $h$.
	
	In combination, the above estimates yield the bound
	\begin{align}\label{eq:123}
		\eval{H_\rel^*(z,\tilde z)}_{t=0}^{t=T} &\le  -\int_0^T\!\!\int_\Om \nabla D_{i}h(z)\cdot\mathbb{M}_{il}(z)\nabla D_{l}h(z)\,\dd x \dd t
		\\&\qquad+\int_0^T\!\!\int_\Om D_ih(z) R_i(z)\,\dd x\dd t
		\\&\qquad +\int_0^T\!\!\int_\Om \nabla (D_i(\xi^*(z)z_j)D_jh(\tilde z) )\cdot M_{il}(z)\nabla D_lh(z)\,\dd x\dd t 
		\\&\qquad -\int_0^T\!\!\int_\Om D_i(\xi^*(z)z_j)R_i(z)D_jh(\tilde z)\,\dd x\dd t
		\\&\qquad +\int_0^T\!\!\int_\Om \nabla\big(D_{ij}h(\tilde z)\xi^*(z)z_j)\cdot 
		\mathbb{M}_{il}(\tilde z)\nabla D_lh(\tilde z)\,\dd x\dd t
		\\&\qquad -\int_0^T\!\!\int_\Om D_{ij}h(\tilde z)\xi^*(z)z_j R_i(\tilde z)\,\dd x\dd t
		\\&\qquad-\int_0^T\!\!\int_\Om \nabla\big(D_{ij}h(\tilde z){\tilde z}_j\big)\cdot\mathbb{M}_{il}(\tilde z)\nabla D_lh(\tilde z)\,\dd x\dd t
		\\&\qquad+\int_0^T\!\!\int_\Om D_{ij}h(\tilde z){\tilde z}_j R_i(\tilde z)\,\dd x\dd t.
	\end{align}
	The asserted inequality is now obtained upon rearranging the integrals on the RHS.
\end{proof}

We next turn to the energetic part. We first note that equation~\eqref{eq:weakformStrong} and the fact that $R_0\equiv0$ imply that 
\begin{equation}\label{eq:u.strong}
	\begin{split}
		\int_0^T\!\!\int_\Om \partial_t\tilde u\varphi\,\dd x\dd t
		= -\int_0^T\!\!\int_\Om A_{0j}(\tilde z)\nabla \tilde z_j\cdot\nabla\varphi\,\dd x\dd t
	\end{split}
\end{equation}
for all $\varphi\in L^1_\loc(I;W^{1,1}(\Om))$.

\begin{lemma}[Evolution of the energetic part]\label{l:evol.grel}
	Recall the definition of $g_\rel$ in~\eqref{eq:defgrel} and the notation $G_\rel(u,\tilde u):=\int_\Om g_\rel(u,\tilde u)\,\dd x$.
	For almost every $T>0$, we have
	\begin{equation}\label{eq:500}
		\begin{split}
			\eval{\En_\rel(u,\tilde u)}_{t=0}^{t=T} = \int_0^T\!\!\int_\Om \;\rho^{\en}\;\dd x\dd t,
		\end{split}
	\end{equation}
	where
	\begin{equation}\label{eq:126''b}
		\begin{split}
			\rho^{(\en)}:= -a(z)&|\nabla u-\nabla \tilde u|^2
			\\&-(a(z)-a(\tilde z))(\nabla u-\nabla \tilde u)\cdot\nabla \tilde u
			\\&-m(z)(\nabla u-\nabla \tilde u)\cdot(\nabla D_0h(z)-\nabla  D_0h(\tilde z))
			\\&-(m(z)-m(\tilde z))(\nabla u-\nabla \tilde u)\cdot\nabla  D_0h(\tilde z).
		\end{split}
	\end{equation}
\end{lemma}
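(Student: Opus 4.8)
The plan is to recognise the asserted identity as the $L^2$ energy balance for the difference $v:=u-\tilde u$ of the internal energy densities, which weakly solves a scalar parabolic equation. First I would collect the available regularity. Testing \eqref{eq:u.weak} with $\varphi\equiv1$ yields conservation of $\int_\Om u$, hence $u\in L^\infty_\loc(I;L^1(\Om))$; the hypothesis $a(z)|\nabla u|^2\in L^1(\Om_T)$ with $a\gtrsim1$ gives $\nabla u\in L^2(\Om_T)$; by Poincar\'e's inequality this yields $u\in L^2_\loc(I;H^1(\Om))$ (and \eqref{eq:ene} additionally gives $u\in L^\infty_\loc(I;L^2(\Om))$, which is not essential). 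Since $\tilde z\in C^{0,1}_\loc(I\times\bar\Om)$ with $\inf_{\Om_T}\tilde z_i>0$, all quantities evaluated along $\tilde z$ — in particular $a(\tilde z)$, $m(\tilde z)$, $\nabla\tilde u$ and $\nabla D_0h(\tilde z)=D_{0j}h(\tilde z)\nabla\tilde z_j$ — are bounded on $\Om_T$. Consequently $v\in L^2_\loc(I;H^1(\Om))$ with $v_{|t=0}=0$.

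\emph{Equation for $v$.} Integrating the weak equation \eqref{eq:u.strong} for $\tilde z$ by parts in time (using $\tilde u_{|t=0}=\uin$ and $\partial_t\tilde u\in L^\infty(\Om_T)$) and subtracting it from \eqref{eq:u.weak}, one sees that $v$ is a weak solution of $\partial_t v=\divv F$ with no-flux boundary condition and vanishing initial datum, where, by \eqref{eq:hp.ueq},
\[
F:=A_{0j}(z)\nabla z_j-A_{0j}(\tilde z)\nabla\tilde z_j
=\big(a(z)\nabla u+m(z)\nabla D_0h(z)\big)-\big(a(\tilde z)\nabla\tilde u+m(\tilde z)\nabla D_0h(\tilde z)\big).
\]
Before testing this equation with $v$, I would check that $F\cdot\nabla v\in L^1(\Om_T)$: the term $a(z)\nabla u\cdot\nabla u=a(z)|\nabla u|^2$ is integrable by hypothesis; $a(z)\nabla u\cdot\nabla\tilde u$ is integrable because $a(z)\nabla u=A_{0j}(z)\nabla z_j-m(z)\nabla D_0h(z)\in L^1(\Om_T)$ (cf.\ Definition~\ref{def:energyeq.weak} and \eqref{eq:hp.A00}) while $\nabla\tilde u\in L^\infty$; the contributions of $m(z)\nabla D_0h(z)$ are controlled via $m(z)|\nabla D_0h(z)|^2\le\scrp(z)\in L^1(\Om_T)$ together with $0\le m\lesssim\mOns\le1$, so that $m(z)\nabla D_0h(z)\in L^2(\Om_T)$; and the remaining terms pair the bounded $\tilde z$-coefficients with $\nabla v\in L^2$.

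\emph{Energy identity and expansion.} I would then establish
\[
\eval{\tfrac12\|v\|_{L^2(\Om)}^2}_{t=0}^{t=T}=-\int_0^T\!\!\int_\Om F\cdot\nabla v\,\dd x\dd t
\]
for a.e.\ $T<T^*$ by a standard regularisation-in-time (Steklov averaging) argument for $\partial_t v=\divv F$, using $v\in L^2_\loc(I;H^1)$, $v_{|t=0}=0$ and $F\cdot\nabla v\in L^1(\Om_T)$; equivalently, one may test with the truncations $v\mapsto\max(-k,\min(k,v))$ and let $k\to\infty$, using monotone convergence on the left-hand side and dominated convergence on the right. Finally, writing $a(z)\nabla u-a(\tilde z)\nabla\tilde u=a(z)\nabla v+(a(z)-a(\tilde z))\nabla\tilde u$ and $m(z)\nabla D_0h(z)-m(\tilde z)\nabla D_0h(\tilde z)=m(z)\big(\nabla D_0h(z)-\nabla D_0h(\tilde z)\big)+(m(z)-m(\tilde z))\nabla D_0h(\tilde z)$, one identifies $-F\cdot\nabla v$ with precisely the four-term expression $\rho^{(\en)}$ of \eqref{eq:126''b}; since $\en_\rel(u,\tilde u)=\tfrac12|u-\tilde u|^2$, this proves the lemma.

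The step I expect to be the main obstacle is the rigorous justification of testing the equation for $v$ with $v$: the flux $a(z)\nabla u$ is in general only in $L^1(\Om_T)$, so $\partial_t v=\divv F$ does not fit the classical $L^2(H^1)$–$L^2(H^{-1})$ parabolic-duality framework, and one must instead exploit the integrability of the bilinear quantity $F\cdot\nabla v$ — a renormalisation/truncation argument — to pass to the limit in the regularised identity while keeping the (vanishing) boundary contributions under control via the no-flux condition.
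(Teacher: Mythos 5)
Your proof takes a genuinely different route from the paper's, and the difference exposes a gap. You attempt to derive the $L^2$ energy balance for $v:=u-\tilde u$ directly from the weak formulations of the two equations; the paper instead \emph{uses the assumed inequality} \eqref{eq:ene} (part of the definition of dissipative renormalised solution, Def.~\ref{def:diss.renorm}) to control $\eval{\En(u)}$, and only needs to test the weak formulation \eqref{eq:u.weak} with the bounded, Lipschitz function $\tilde u$ and the strong equation \eqref{eq:u.strong} with $u-\tilde u$. This is not cosmetic. The $L^2$ energy identity for $v$ is, up to the (unproblematic) identities from the two testings, \emph{equivalent} to the $L^2$ energy identity for $u$, i.e.\ to equality in \eqref{eq:ene}. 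But equality in \eqref{eq:ene} is not available in general: the definition of dissipative renormalised solution only postulates the inequality $\le$. Thus the equality you claim to prove cannot hold in the stated generality — the correct output is $\le$, which is also what the paper's proof yields and what Corollary~\ref{cor:distalpha} records. (The ``$=$'' in the lemma as stated is a minor inaccuracy of the paper; the proof there produces $\le$, which suffices.)

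Even setting aside this sign issue, the key step — rigorously testing $\partial_t v=\divv F$ with $v$ — is not adequately justified. You correctly identify it as ``the main obstacle,'' but the proposed fix (``standard Steklov averaging'' or ``test with $T_k(v)$ and let $k\to\infty$'') is not available here: the flux piece $a(z)\nabla u$ is only known to lie in $L^1(\Om_T)$, so $\partial_t v$ does not live in the dual of $L^2_t H^1_x$, and Steklov averaging in time does not upgrade the spatial integrability of $F$; likewise, testing with $T_k(v)$ requires an approximation argument (mollification plus a partition of unity respecting the no-flux boundary, as in the proof of Lemma~\ref{l:u.positivity}) that you do not carry out, and which in any case can only close if one has the one-sided information supplied by \eqref{eq:ene}. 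Exploiting the good sign in $\int a(z)|\nabla u|^2$ to get an \emph{inequality} by lower semicontinuity is exactly what the authors build into the definition of dissipative renormalised solution so that Lemma~\ref{l:evol.grel} does not have to re-prove it. The algebraic decomposition $-F\cdot\nabla v=\rho^{(\en)}$ at the end of your argument is, however, correct and coincides with the paper's.
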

\begin{proof}
	We expand $\en_\rel(u,\tilde u)=\tfrac{1}{2}u^2-u\tilde u+\tfrac{1}{2}\tilde u^2$.
	
	To deal with the first term on the RHS, we use the energy inequality~\eqref{eq:ene}, i.e.\ the property that for a.e.\ $T<T^*$ 
	\begin{equation*}
		\eval{\En(u)}_{t=0}^{t=T} \le -\int_0^T\!\!\int_\Om a(z)|\nabla u|^2\,\dd x\dd t
		-\int_0^T\!\!\int_\Om m(z)\nabla D_0h(z)\cdot\nabla u\,\dd x\dd t.
	\end{equation*}
	
	To determine the time evolution of the term $\int_\Om u \tilde u\,\dd x$, we assert that  the Lipschitz function $\tilde u$ is admissible in the weak formulation~\eqref{eq:u.weak} of the equation for $u$, thus yielding
	\begin{equation*}
		\begin{split}
			-\eval{\int_\Om  u \tilde u\,\dd x}_{t=0}^{t=T}&+\int_0^T\!\!\int_\Om  u\partial_t\tilde u\,\dd x\dd t
			\\&=\int_0^T\!\!\int_\Om \big(a(z)\nabla u\cdot\nabla\tilde u+m(z)\nabla D_0h(z)\cdot\nabla \tilde u\big)\,\dd x\dd t,
		\end{split}
	\end{equation*}	
	where we used~\eqref{eq:hp.ueq}.
	The admissibility of $\tilde u$ can be shown as follows: first exploit the regularity properties of  $\nabla u\in L^2_\loc(I;L^2(\Om))$, $u\in L^\infty_\loc(I;L^2(\Om))$, which hold true by hypothesis resp.\ follow from~\eqref{eq:ene} and the fact that $\scrp(z)\in L^1_\loc(I;L^1(\Om))$.
	Assumption~\ref{eq:hp.A00} and the Gagliardo--Nirenberg interpolation applied to $u$ as well as the estimate $$m|\nabla D_0h(z)|\le \sqrt{m}\sqrt{\scrp(z)}\lesssim \sqrt{\scrp(z)}$$ then imply improved integrability of the flux term, namely  for some $s=s(d)>1$ $$A_{0j}(z)\nabla z_j=a(z)\nabla u+m(z)\nabla D_0h(z)\in  L^s_\loc(I;L^s(\Om)).$$ With these bounds one can now use an approximation argument to show that, under the current hypotheses, eq.~\eqref{eq:u.weak} can be extended in particular to Lipschitz functions $\varphi\in C^{0,1}(I\times\bar\Om)$.
	
	Finally, using the test function $\varphi=u-\tilde u \in L^1_\loc(I;W^{1,1}(\Om))$ in the weak equation~\eqref{eq:u.strong} for $\tilde u$ gives
	\begin{equation*}
		-\int_0^T\!\!\int_\Om \partial_t\tilde u(u-\tilde u)\,\dd x\dd t = 
		\int_0^T\!\!\int_\Om a(\tilde z)\nabla\tilde u\cdot\nabla(u-\tilde u)\,\dd x\dd t.
	\end{equation*}
	
	The asserted identity~\eqref{eq:500} is now obtained by adding up the above equations and rearranging  appropriately the terms on the RHS.
\end{proof}       
The evolution inequality for our generalised distance is an immediate consequence of the previous two propositions. 
\begin{corollary}\label{cor:distalpha}
	Let $\alpha\in(0,\infty)$. We have 
	\begin{equation}\label{eq:evol.dist}
		\begin{split}
			\eval{\Dist_\alpha(z,\tilde z)}_{t=0}^{t=T} \le 
			\int_0^T\!\!\int_\Om \;\rho_\alpha\;\dd x \dd t,
		\end{split}
	\end{equation}
	where $\rho_\alpha:=\rho^{(h)}+\alpha\rho^{(\en)}$ with $\rho^{(h)}$, $\rho^{(\en)}$
	given by~\eqref{eq:126'b} resp.~\eqref{eq:126''b}.
\end{corollary}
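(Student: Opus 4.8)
\emph{Proof of Corollary~\ref{cor:distalpha} (plan).} The plan is to combine the two preceding lemmas directly; the corollary carries no new analytic content. I would fix $\alpha\in(0,\infty)$ and recall from \eqref{eq:600} that $\Dist_\alpha(z,\tilde z)=H_\rel^*(z,\tilde z)+\alpha\,\En_\rel(u,\tilde u)$. Lemma~\ref{l:evol.hrel} holds for a.e.\ $T<T^*$ and Lemma~\ref{l:evol.grel} holds for a.e.\ $T>0$; the intersection of these two full-measure sets is again of full measure in $(0,T^*)$, so I would restrict attention to such $T$.

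For such $T$, Lemma~\ref{l:evol.hrel} gives the inequality
\begin{equation*}
\eval{H_\rel^*(z,\tilde z)}_{t=0}^{t=T}\le\int_0^T\!\!\int_\Om\rho^{(h)}\,\dd x\dd t,
\end{equation*}
while Lemma~\ref{l:evol.grel} gives the identity
\begin{equation*}
\eval{\En_\rel(u,\tilde u)}_{t=0}^{t=T}=\int_0^T\!\!\int_\Om\rho^{(\en)}\,\dd x\dd t.
\end{equation*}
Since $\alpha>0$, multiplying the second relation by $\alpha$ preserves it as an equality; adding the result to the first and using linearity then yields
\begin{align*}
\eval{\Dist_\alpha(z,\tilde z)}_{t=0}^{t=T}
&=\eval{H_\rel^*(z,\tilde z)}_{t=0}^{t=T}+\alpha\eval{\En_\rel(u,\tilde u)}_{t=0}^{t=T}\\
&\le\int_0^T\!\!\int_\Om\big(\rho^{(h)}+\alpha\,\rho^{(\en)}\big)\,\dd x\dd t=\int_0^T\!\!\int_\Om\rho_\alpha\,\dd x\dd t,
\end{align*}
which is exactly \eqref{eq:evol.dist}.

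There is essentially no obstacle here: the substantive work — the integration by parts in the renormalised formulations \eqref{eq:118}, \eqref{eq:118..} and in the weak formulations \eqref{eq:weakformStrong}, \eqref{eq:u.weak}, \eqref{eq:u.strong}, the admissibility of the various test functions (in particular $\psi=D_jh(\tilde z)$, $\psi=D_{ij}h(\tilde z)\xi^*(z)z_j$, $\varphi=u-\tilde u$ and $\varphi=\tilde u$), the use of the dissipation inequalities \eqref{eq:edin} and \eqref{eq:ene}, and the rearrangement producing the precise expressions \eqref{eq:126'b} and \eqref{eq:126''b} for $\rho^{(h)}$ and $\rho^{(\en)}$ — has already been carried out in the proofs of Lemma~\ref{l:evol.hrel} and Lemma~\ref{l:evol.grel}, and is not repeated. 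The only points requiring verification are the two trivial ones indicated above, namely the compatibility of the two almost-everywhere time sets and the positivity of $\alpha$ (so that the direction of the inequality from Lemma~\ref{l:evol.hrel} is not reversed when the scaled identity of Lemma~\ref{l:evol.grel} is added to it).
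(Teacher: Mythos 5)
Your proof is correct and is exactly what the paper intends: the paper gives no separate proof of Corollary~\ref{cor:distalpha}, stating only that it is an immediate consequence of Lemmas~\ref{l:evol.hrel} and~\ref{l:evol.grel}, which is what you carry out. Your two points of care — intersecting the two full-measure sets of admissible times $T$ and noting that $\alpha>0$ preserves the inequality direction when combining — are the only details that merit mention.
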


\subsection{Stability estimate}\label{ssec:stab.ineq}

\begin{proof}[Proof of Theorem~\ref{thm:wkuniq}]	
	Since \ref{hp:grad.flux.control} implies \ref{hp:trunc.grad.flux}, it suffices to prove the assertion for the case \enquote{$\mOns=1$ (and thus, by hp.,  \ref{hp:grad.flux.control})} and the case \enquote{$\mOns=0$ and \ref{hp:trunc.grad.flux}}, henceforth referred to as Case~$\mOns=1$ resp.~Case~$\mOns=0$.
	
	We will show the following.
	\begin{itemize}
		\item Case~$\mOns=1:$\; 
		if $\alpha\in(0,1]$ is sufficiently small, and if $E=E(\alpha)$ and $N=N(E)$ are large enough, then for almost all $T\in(0,T^*)$
		\begin{equation}\label{eq:Gronwall}
			\eval{\Dist_\alpha(z(t,\cdot),\tilde z(t,\cdot))}_{t=0}^{t=T}\lesssim_{E,N,\alpha} 
			\int_0^T\Dist_\alpha(z,\tilde z)\,\dd t.
		\end{equation}
		\item Case~$\mOns=0:$\; 
		if $E$, $N=N(E)$ and $\alpha\in[1,\infty)$ are chosen large enough ($\alpha$ possibly depending on $E,N$), then for a.a. $T\in(0,T^*)$ ineq.~\eqref{eq:Gronwall} holds true.
	\end{itemize}
	Once ineq.~\eqref{eq:Gronwall} has been established, we can invoke Gronwall's inequality to infer that for a.e.\ $T\in(0,T^*)$
	\begin{align}\label{eq:Gronwall.1}
		\Dist_\alpha(z(T,\cdot),\tilde z(T,\cdot))\le \Dist_\alpha(\zin,\widetilde{\zin})\exp(kT),
	\end{align}
	where $k=k(E,N,\alpha)>0.$ The estimates will also depend on the fixed constant $\iota>0$, i.e.\ on the pointwise lower bound for $\min(\tilde z)$. This dependency will only be indicated occasionally and for the sake of clarity.
	
	In view of inequality~\eqref{eq:evol.dist} it suffices to show the pointwise bound
	\begin{align}\label{eq:601}
		\rho_\alpha\;\lesssim_{E,N,\alpha}\;\dist_\alpha(z,\tilde z).
	\end{align}
	An elementary ingredient in the proof of this bound will be the coercivity properties of $\dist_\alpha$ (see Prop.~\ref{prop:coerc.hrel}). We anticipate that referring to Prop.~\ref{prop:coerc.hrel} will be the only instance, where the present proof makes use of the more specific form of the entropy density $h(u,c)$ assumed in~\ref{it:hC3}. Loosely speaking, besides the locally strict convexity ensuring~\eqref{eq:coercA}, we will rely on a lower bound on the generalised distance of the form $\dist_\alpha(z,\tilde z)\gtrsim 1+u^2$ for $|z|\gg1$.
	
	We will distinguish the cases $\mathcal{A}, \mathcal{B}$ and $\mathcal{C}$ introduced on page~\pageref{eq:ABC}, where, owing to the degeneracies of $\mathbb{M}(z)$ occurring when one of the concentrations vanishes, we further decompose the set~$\mathcal{A}$ into 
	\begin{equation}\label{eq:A+A0}
		\mathcal{A}_+:=\{z':\min(z')\ge \iota\}\cap\mathcal{A} \qquad\text{and}\qquad 
		\mathcal{A}_0:=\{z':\min(z')< \iota\}\cap\mathcal{A},
	\end{equation}
	where $\min(z'):=\min\{z_0',\dots,z_n'\}$ for $z'=(z_0',\dots,z_n')\in[0,\infty)^{1+n}$. This decomposition further serves to avoid regularity issues of $h$ as $z_i\searrow0$ for some $i\in\{0,\dots,n\}$.
	
	If $z\in (\mathcal{A}_+)^c$, we will make use of the following equivalent formula for $\rho^{(\en)}$
	\begin{equation}\label{eq:rho.g.far}
		\begin{split}
			\rho^{(\en)}= -a(z)|\nabla u|^2&-a(\tilde z)|\nabla \tilde u|^2+a(z)\nabla u\cdot\nabla \tilde u+a(\tilde z)\nabla u\cdot\nabla \tilde u
			\\&-m(z)(\nabla u-\nabla \tilde u)\cdot\nabla D_0h(z)
			\\&+m(\tilde z)(\nabla u-\nabla \tilde u)\cdot\nabla  D_0h(\tilde z).
		\end{split}
	\end{equation}
	Since, by hypothesis, $0\le m(z)\lesssim \mOns$ and $a(z)\gtrsim1$, this implies that
	\begin{align}\label{eq:rho.g5}
		\rho^{(\en)}&\le -\tfrac{a(z)}{2}|\nabla u|^2+C|a(z)\nabla u|+Cm(z)\mOns|\nabla D_0h(z)|^2+C.
	\end{align}
	Using this form in the case when $z\in (\mathcal{A}_+)^c$,  we can avoid for instance issues due to $a(z)$ becoming singular as $u\to0$ by using the bound~\ref{eq:hp.A00} on the energy flux.
	
	\smallskip
	
	Finally, note that $z\in\mathcal{A}$ implies $\xi^*(z)=1$ and $D^k\xi^*(z)=0$ $\forall k\in\mathbb{N}_+$, so that, if $z\in\mathcal{A}$,  one has  by formula~\eqref{eq:126'b}
	\begin{equation}\label{eq:126A'}
		\begin{split}
			\rho^{(h)} &=-\nabla D_{i}h(z)\cdot\mathbb{M}_{il}(z)\nabla D_{l}h(z)
			\\&\qquad + \nabla D_ih(\tilde z)\cdot M_{il}(z)\nabla D_lh(z)
			\\&\qquad + \nabla\Big(D_{ij}h(\tilde z)(z_j-\tilde z_j)\Big)\cdot 
			\mathbb{M}_{il}(\tilde z)\nabla D_lh(\tilde z)\,\dd x\dd t
			\\&\qquad- D_{ij}h(\tilde z)(z_j-{\tilde z}_j) R_i(\tilde z)
			\\&\qquad+ (D_ih(z)-D_ih(\tilde z)) R_i(z).
		\end{split}
	\end{equation}
	We are now ready to tackle the four cases.
	
	\smallskip
	
	\underline{Case $z\in \mathcal{A}_+:$}
	in this case we have the control $\iota\le z_i\le E$ for all $i\in\{0,\dots,n\}$, and we need to show that
	$\rho_\alpha\lesssim|z-\tilde z|^2$.   
	We therefore rewrite formula~\eqref{eq:126A'} as 
	\begin{equation}\label{eq:126A.'}
		\begin{split}
			\rho^{(h)} &=-\nabla (D_{i}h(z)-D_ih(\tilde z))\cdot\mathbb{M}_{il}(z)\nabla D_{l}h(z)
			\\&\qquad + \nabla\big(D_{ij}h(\tilde z)(z_j-\tilde z_j)\big)\cdot 
			\mathbb{M}_{il}(\tilde z)\nabla D_lh(\tilde z)
			\\&\qquad+\big(D_ih(z)-D_ih(\tilde z)-D_{ij}h(\tilde z)(z_j-{\tilde z}_j)\big) R_i(\tilde z)
			\\&\qquad+ (D_ih(z)-D_ih(\tilde z))(R_i(z)-R_i(\tilde z))
			\\&=-\nabla (D_{i}h(z)-D_ih(\tilde z))\cdot\mathbb{M}_{il}(z)\nabla (D_{l}h(z)-D_lh(\tilde z))
			\\&\qquad -\nabla (D_{i}h(z)-D_ih(\tilde z))\cdot(\mathbb{M}_{il}(z)-\mathbb{M}_{il}(\tilde z))\nabla D_lh(\tilde z)
			\\&\qquad - \nabla\big(D_{i}h(z)-D_ih(\tilde z)-D_{ij}h(\tilde z)(z_j-\tilde z_j)\big)\cdot 
			\mathbb{M}_{il}(\tilde z)\nabla D_lh(\tilde z)
			\\&\qquad+\big(D_ih(z)-D_ih(\tilde z)-D_{ij}h(\tilde z)(z_j-{\tilde z}_j)\big) R_i(\tilde z)
			\\&\qquad+ (D_ih(z)-D_ih(\tilde z))(R_i(z)-R_i(\tilde z)).
		\end{split}
	\end{equation}
	Since, by hp.~\eqref{eq:Mnd}, $\mathbb{M}(z)\ge\diag(m(z)\mOns,0,\dots,0)+\epsilon(\iota)\diag(0,1,\dots,1)$ for a suitable constant $\epsilon(\iota)>0$, we have
	\begin{multline}\label{eq:127'}
		\nabla (D_{i}h(z)-D_ih(\tilde z))\cdot\mathbb{M}_{il}(z)\nabla (D_{l}h(z)-D_lh(\tilde z))
		\\\ge m(z)\mOns|\nabla D_0h(z)-\nabla D_0h(\tilde z)|^2+ \epsilon(\iota)\sprmA|\nabla D_ch(z)-\nabla D_ch(\tilde z)|^2
	\end{multline}
	for any $\sprmA\in(0,1]$. The auxiliary parameter $\sprmA=\sprmA(\alpha)$ will eventually be chosen small enough to be specified below.
	For $i\in\{1,\dots, n\}$ we observe that since $\iota\le z_j\le E$ for all $j\in\{0,\dots,n\}$,  the triangle inequality yields
	\begin{equation*}\begin{split}
			|\nabla D_ih(z)-\nabla D_ih(\tilde z)|&\ge |\nabla\log(c_i)-\nabla \log(\tilde c_i)|-|\nabla \log(w_i(u))-\nabla \log(w_i(\tilde u))|
			\\&\ge \tfrac{1}{c_i}|\nabla c_i-\nabla \tilde c_i|-\big|\tfrac{1}{c_i}-\tfrac{1}{\tilde c_i}\big||\nabla\tilde c_i|-|\nabla u-\nabla \tilde u|\tfrac{w_i'(u)}{w_i(u)}
			\\&\qquad-\big|\tfrac{w_i'(u)}{w_i(u)}-\tfrac{w_i'(\tilde u)}{w_i(\tilde u)}\big||\nabla \tilde u|
			\\&\ge
			\epsilon(E,\iota)|\nabla c-\nabla \tilde c|-C(E,\iota)|\nabla u-\nabla\tilde u|-C(E,\iota)|z-\tilde z|,
		\end{split}
	\end{equation*}
	where $\epsilon(E,\iota)>0$ is some sufficiently small constant.
	
	We next estimate, using the local Lipschitz continuity of $\mathbb{M}$,
	\begin{equation}\label{eq:127.'}
		\begin{split}
			|\nabla (D_{i}h(z)-D_ih(\tilde z))\cdot(\mathbb{M}_{il}(z)-\mathbb{M}_{il}(\tilde z))\nabla D_lh(\tilde z)|&\lesssim_{E,\iota}|\nabla Dh(z)-\nabla Dh(\tilde z)||z-\tilde z|
			\\&\lesssim_{E,\iota}  |z-\tilde z||\nabla z-\nabla \tilde z|+|z-\tilde z|^2.
		\end{split}
	\end{equation}
	Before estimating the remaining terms, we compute  for $f\in C^3((0,\infty)^{1+n})$
	\begin{equation}\label{eq:ft2}
		\begin{split}
			\nabla \big(f(z)-f(\tilde z)-D_jf(\tilde z)(z_j-\tilde z_j)\big)
			&=D_jf(z)\nabla z_j-D_jf(\tilde z)\nabla \tilde z_j-D_jf(\tilde z)\nabla (z_j-\tilde z_j)
			\\&\qquad -D_{jk}f(\tilde z)(z_j-\tilde z_j)\nabla \tilde z_k
			\\&=(D_jf(z)-D_jf(\tilde z))\nabla z_j -D_{jk}f(\tilde z)(z_j-\tilde z_j)\nabla \tilde z_k
			\\&=(D_jf(z)-D_jf(\tilde z))\nabla (z_j-\tilde z_j)
			\\&\qquad+[D_kf(z)-D_kf(\tilde z)-D_{jk}f(\tilde z)(z_j-\tilde z_j)]\nabla \tilde z_k
		\end{split}
	\end{equation}
	and, using Taylor's theorem, for $k=0,\dots,n$
	\begin{equation*}
		|D_kf(z)-D_kf(\tilde z)-D_{jk}f(\tilde z)(z_j-\tilde z_j)|\lesssim_{E,\iota,f}|z-\tilde z|^2.
	\end{equation*}
	Letting $f(z)=D_ih(z)$, we infer since $h\in C^4((0,\infty)^{1+n})$ (see hp.~\ref{it:hC3}) that
	\begin{equation}\label{eq:127..'}
		\begin{split}
			|\nabla\big(D_{i}h(z)-D_ih(\tilde z)-D_{ij}h(\tilde z)(z_j-\tilde z_j)\big)|
			&\lesssim_{\iota, E} |z-\tilde z||\nabla z-\nabla \tilde z|+|z-\tilde z|^2.
		\end{split}
	\end{equation}
	Using the previous bounds to estimate the RHS of~\eqref{eq:126A.'}, recalling also hp.~\ref{it:R.locLip}, and applying Young's inequality and an absorption argument, we thus infer for suitable $\epsilon(\iota,E)>0$
	\begin{equation}\label{eq:"700}
		\begin{split}
			\rho^{(h)}\le -\epsilon(\iota,E)\sprmA|\nabla c-\nabla \tilde c|^2-
			m(z)&\mOns|\nabla D_0h(z)-\nabla D_0h(\tilde z)|^2
			\\&+C_1(E,\iota)\sprmA|\nabla u-\nabla\tilde u|^2+C(\delta,E,\iota)|z-\tilde z|^2.
		\end{split}
	\end{equation}
	On the other hand, using the fact that $z\mapsto a(z)$ is locally Lipschitz continuous in $(0,\infty)^{1+n}$,  we deduce from eq.~\eqref{eq:126''b} for suitable $\epsilon_1>0$ and $C_2<\infty$ (independent of $E,\iota$)
	\begin{equation}\label{eq:"701}
		\rho^{(\en)}\le -\epsilon_1 |\nabla u-\nabla \tilde u|^2+C_2m(z)\mOns|\nabla D_0h(z)-\nabla D_0h(\tilde z)|^2+C(E,\iota)|z-\tilde z|^2,
	\end{equation}  
	where we used the fact that $0\le m(z)\lesssim \mOns$.
	
	If $\mOns=1$, we choose $\alpha\in(0,1]$ small enough such that $\alpha C_2\le 1$ and subsequently $\sprmA=\sprmA(\alpha,E,\iota)$ sufficiently small such that 
	$\sprmA C_1(E,\iota)\le \alpha\epsilon_1$. We may then conclude that 
	\begin{align}\label{eq:"703}
		\rho_\alpha=\rho^{(h)} +\alpha \rho^{(\en)}\le C(\alpha,E,\iota)|z-\tilde z|^2.
	\end{align}
	Let us emphasise that the smallness condition of $\alpha$ is independent of $E$.
	
	If instead $\mOns=0$,  we choose for given\footnote{In the case $\mOns=0$, it suffices to restrict $\alpha$ to the range $1\le\alpha<\infty$.} $\alpha\in[1,\infty)$ the parameter $\sprmA=\sprmA(\alpha, E,\iota)$ small enough such that $\sprmA \,C_1(E,\iota)\le \epsilon_1\alpha$, and obtain as before
	\begin{align}\label{eq:"703.}
		\rho_\alpha\le C(\alpha, E,\iota)|z-\tilde z|^2.
	\end{align}
	\smallskip
	
	\underline{Case $z\in \mathcal{A}_0:$}
	in this case, we have no lower bound on $z_i$ away from zero, but since 
	$\min(z)\le\iota$ and $\min(\tilde z)\ge2\iota$, we know that $|z-\tilde z|\ge\iota$. By Prop.~\ref{prop:coerc.hrel} (cf.~\eqref{eq:coercA}) it thus suffices to prove that $\rho_\alpha\lesssim_{E} 1$.
	
	Recalling~\eqref{eq:126A'} and~\ref{eq:hp.Rdiss}, we estimate
	\begin{equation}\label{eq:126A..'}
		\begin{split}
			\rho^{(h)} &\le-\nabla D_{i}h(z)\cdot\mathbb{M}_{il}(z)\nabla D_{l}h(z)
			\\&\qquad + \nabla D_ih(\tilde z)\cdot \mathbb{M}_{il}(z)\nabla D_lh(z)
			\\&\qquad + \nabla\Big(D_{ij}h(\tilde z)(z_j-\tilde z_j)\Big)\cdot 
			\mathbb{M}_{il}(\tilde z)\nabla D_lh(\tilde z)
			\\&\qquad +C(E)
			\\&\le - \scrp(z) + C(E)\sqrt{\scrp(z)}+C|\nabla u|+ C(E),
		\end{split}
	\end{equation}
	where the last step uses hp.~\eqref{eq:hp.dztrunc.P} and hp.~\eqref{eq:hp.fluxtrunc.P}.
	Hence, 
	\begin{equation*}
		\rho^{(h)}\le -\tfrac{1}{2} \scrp(z) +C(E)|\nabla u|+ C(E).
	\end{equation*}
	Next, by ineq.~\eqref{eq:rho.g5} and hp.~\eqref{eq:hp.fluxtrunc.P}, 
	\begin{align}\label{eq:rhogbd}
		\rho^{(\en)}&\le -\tfrac{a(z)}{2}|\nabla u|^2+C(\delta_1, E)+\tfrac{\delta_1 }{4}\scrp(z)+Cm(z)\mOns|\nabla D_0h(z)|^2
	\end{align}
	for any $\delta_1>0$.
	
	If $\mOns=1$, we let $\delta_1=1$ and use the estimate $m(z)\mOns |\nabla D_0h(z)|^2\le \scrp(z)$, which follows from hp.~\ref{eq:hp.Mnondeg}, to see that after possibly decreasing $\alpha\in(0,1]$ we have $\rho_\alpha\le-\alpha\tfrac{a(z)}{2}|\nabla u|^2+C(E)|\nabla u|+C(E)\le -\alpha\tfrac{a(z)}{4}|\nabla u|^2+C(\alpha,E)$.
	
	If $\mOns=0$, we choose $\delta_1=\tfrac{1}{\alpha}\le 1$. Then 
	\begin{equation*}
		\begin{split}
			\rho_\alpha&\le   -\tfrac{1}{2} \scrp(z) 
			-\alpha\tfrac{a(z)}{2}|\nabla u|^2+\tfrac{a(z)}{4}|\nabla u|^2+C(\alpha, E)
			+\tfrac{1}{4}\scrp(z)
			\\&\lesssim_{\alpha,E} 1.
		\end{split}
	\end{equation*}
	
	\smallskip
	
	\underline{Case $z\in \mathcal{B}:$} in this case derivatives of $\xi^*$ do in general not vanish, but we know that $1\ll E<|z|_1<E^N$.
	We estimate
	\begin{align}
		\rho^{(h)} &=-\nabla D_{i}h(z)\cdot\mathbb{M}_{il}(z)\nabla D_{l}h(z)
		\\&\qquad + \nabla (D_i(\xi^*(z)z_j)D_jh(\tilde z) )\cdot \mathbb{M}_{il}(z)\nabla D_lh(z)
		\\&\qquad + \nabla\Big(D_{ij}h(\tilde z)(\xi^*(z)z_j-\tilde z_j)\Big)\cdot 
		\mathbb{M}_{il}(\tilde z)\nabla D_lh(\tilde z)
		\\&\qquad- D_{ij}h(\tilde z)(\xi^*(z)z_j-{\tilde z}_j) R_i(\tilde z)
		\\&\qquad+ D_ih(z)R_i(z)-D_jh(\tilde z)D_i(\xi^*(z)z_j) R_i(z)
		\\&\le - \scrp(z)			\label{eq:126B'}
		\\&\qquad +C| \nabla D_i(\xi^*(z)z_j)\cdot \mathbb{M}_{il}(z)\nabla D_lh(z)|
		\\&\qquad + C(E,N)\sqrt{\scrp(z)}+C(E,N)|\nabla u|\hspace{2cm}(\text{by hp.}~\eqref{eq:hp.fluxtrunc.P},~\eqref{eq:hp.dztrunc.P})
		\\&\qquad +C(E,N)\hspace{2cm}(\text{using hp.}~\ref{eq:hp.Rdiss})
		\\&\le -\tfrac{1}{2} \scrp(z)
		\\&\qquad +C| \nabla D_i(\xi^*(z)z_j)\cdot \mathbb{M}_{il}(z)\nabla D_lh(z)|
		\\&\qquad +C(E,N)|\nabla u|+C(E,N).
	\end{align}
	In order to estimate the term $|\nabla D_i(\xi^*(z)z_j)\cdot \mathbb{M}_{il}(z)\nabla D_lh(z)|$, we observe that 
	\begin{equation}\label{eq:301}
		\begin{split}
			|\nabla D_i(\xi^*(z)z_j)|&\lesssim|D\xi^*(z)||\nabla z|+|D^2\xi^*(z)z||\nabla z|
			\\&\lesssim\tfrac{1}{N|z|_1}|\nabla z|	\hspace{2.5cm}(\text{by }\ref{eq:Dxi*}).
		\end{split}
	\end{equation}
	We first consider the case $\mOns=1$. Then~\ref{hp:grad.flux.control} is at our disposal, which yields using~\eqref{eq:301}
	\begin{equation*}
		\begin{split}
			| \nabla D_i(\xi^*(z)z_j)||\mathbb{M}_{il}(z)\nabla D_lh(z)|&\lesssim \tfrac{1}{N|z|_1}|\nabla z||\sum_jA_{ij}(z)\nabla z_j|\lesssim \tfrac{1}{N}\scrp(z)
			+\tfrac{1}{N}|\nabla u|^2.
		\end{split}
	\end{equation*}
	Thus, choosing $N=N(\min\{1,\alpha\})$ sufficiently large, we infer
	\begin{align}
		\rho^{(h)} &\le -\tfrac{1}{4} \scrp(z)+\min\{1,\alpha\}\tfrac{a(z)}{8}|\nabla u|^2+ C(E,N,\alpha).
	\end{align}
	Next, simiarly as in~\eqref{eq:rhogbd}, we estimate 
	\begin{align}\label{eq:rhogbd.m}
		\rho^{(\en)}&\le -\tfrac{a(z)}{2}|\nabla u|^2+C(E,N)+\tfrac{1}{8}\scrp(z)+C_4m(z)\mOns|\nabla D_0h(z)|^2.
	\end{align}
	Decreasing $\alpha\in(0,1]$, if necessary, to ensure that $\alpha C_4m(z)\mOns|\nabla D_0h(z)|^2\le \tfrac{1}{8} \scrp(z)$, 
	we obtain $\rho_\alpha\lesssim_{N,E}1.$
	
	\smallskip
	
	It remains to consider the case where $\mOns=0$ and~\ref{hp:trunc.grad.flux} are fulfilled.
	Since $\underline{u}:=\inf \uin= \inf\tilde u(0,\cdot)\ge 2\iota$, Lemma~\ref{l:u.positivity} yields~$\inf u\ge\underline u>0$. 
	By hp.~\ref{hp:trunc.grad.flux},
	we infer for all $0\le i\le n$
	\begin{align}
		|\nabla z||\mathbb{M}_{il}(z)\nabla D_lh(z)|\le C|z|_1\scrp(z)
		+C(E,N,\underline u)|\nabla u|^2.
	\end{align}
	Thus, recalling ineq.~\eqref{eq:301}, we can estimate
	for $N$ large enough
	\begin{align}\label{eq:ineq.B}
		| \nabla D_i(\xi^*(z)z_j)\cdot \mathbb{M}_{il}(z)\nabla D_lh(z)|&\le 
		\tfrac{1}{4}\scrp(z)+C(E,N,\underline{u})|\nabla u|^2
	\end{align}
	to infer
	\begin{equation*}
		\rho^{(h)} \le -\tfrac{1}{4} \scrp(z)+ C_1(E,N,\underline{u})|\nabla u|^2+C(E,N).
	\end{equation*}
	Next, since $\mOns=0$, ineq.~\eqref{eq:rho.g5} yields
	\begin{align}
		\rho^{(\en)}&\le -\tfrac{a(z)}{2}|\nabla u|^2+C(E,N)\sqrt{\scrp(z)}+C
		\\&\le -\tfrac{a(z)}{2}|\nabla u|^2+\tfrac{1}{8\alpha}\scrp(z) +C(\alpha,E,N).
	\end{align}
	Increasing $\alpha=\alpha(E,N,\underline{u})$, if necessary, to ensure that $\alpha\tfrac{a(z)}{2}\ge C_1(E,N,\underline{u})$, we conclude 
	\begin{align}
		\rho_\alpha\le   -\tfrac{1}{8} \scrp(z) +C(\alpha,E,N,\underline{u}).
	\end{align}
	
	\medskip
	
	\underline{Case $z\in \mathcal{C}:$}   
	in this case, $\xi^*(z)=0$ and $D^k\xi^*(z)=0$ for all $k\in\mathbb{N}_+$.
	Thus
	\begin{equation}\label{eq:126C'}
		\begin{split}
			\rho^{(h)} &=-\nabla D_{i}h(z)\cdot\mathbb{M}_{il}(z)\nabla D_{l}h(z)
			\\&\qquad - \nabla(D_{ij}h(\tilde z)\tilde z_j)\cdot 
			\mathbb{M}_{il}(\tilde z)\nabla D_lh(\tilde z)
			\\&\qquad +D_{ij}h(\tilde z){\tilde z}_jR_i(\tilde z)
			\\&\qquad+ D_ih(z)R_i(z)
			\\&\le - \scrp(z)+C,
		\end{split}
	\end{equation}	
	where we used hp.~\ref{eq:hp.Rdiss}.
	
	If $\mOns=1$, we have thanks to~\eqref{eq:rho.g5} and hp.~\ref{eq:hp.Mnondeg}, \ref{eq:hp.A00}
	$$ \rho^{(\en)}\le -\tfrac{a(z)}{2}|\nabla u|^2+C_4\scrp(z) +u^2+C,$$
	where $C_4$ is independent of $E,N$. 
	Hence, after possibly decreasing $\alpha\in(0,1]$ to ensure that $\alpha C_4\le 1$, we find 
	\begin{align}
		\rho_\alpha&\le   C+\alpha u^2
		\lesssim_{\alpha} \dist_\alpha(z,\tilde z),
	\end{align}
	where the second step follows from~\eqref{eq:coercAc01} (after choosing $E=E(\alpha)$ large enough).
	
	If $\mOns=0$, we estimate using again~\eqref{eq:rho.g5} and hp.~\ref{eq:hp.A00}
	\begin{align}
		\rho^{(\en)}&\le -\tfrac{a(z)}{2}|\nabla u|^2+C(1+u)\sqrt{\scrp(z)}+C
		\\&\le -\tfrac{a(z)}{2}|\nabla u|^2+\tfrac{1}{2\alpha}\scrp(z) +C(\alpha)u^2+C(\alpha),
	\end{align}
	and infer
	\begin{align}
		\rho_\alpha &\le -\tfrac{1}{2} \scrp(z)-\alpha \tfrac{a(z)}{2}|\nabla u|^2+C(\alpha)u^2+C(\alpha)
		\\&\lesssim_{\alpha}\dist_\alpha(z,\tilde z).
	\end{align}
	The second step follows from the coercivity property~\eqref{eq:coercAc} and the fact that $\alpha\ge1$.
	
	This proves the bound~\eqref{eq:601} and thus completes the proof of Theorem~\ref{thm:wkuniq}.
\end{proof}

\section{Strong entropy dissipation property}\label{sec:edin}

\begin{proof}[Proof of Proposition~\ref{prop:edin}]
	We first establish~\eqref{eq:edin.s} for $s=0$ and a.e.~$t=T\in(0,T^*)$, that is, we first prove~\eqref{eq:edin}. In a second step (see page~\pageref{eq:111}), we point out how to extend the result to a.e.~$0<s<t<T^*$.
	\smallskip
	
	\underline{Case 1: $s=0, t=T\in(0,T^*)$.}
	
	We consider for a small parameter $\delta>0$ the regularised entropy density
	\begin{equation*}
		h_{\delta}(u,c) = \delta u+ h(u,c).
	\end{equation*}
	The additive term $\delta u$ serves to ensure coercivity, since the original density $h(z)$ may in general allow for cancellations at infinity reflecting the coupling between concentrations and energy component. In particular, for every $L\in \mathbb{N}$ the sublevel set $\{z\in \mathbb{R}_{\ge0}^{1+n}:h_\delta(z)\le L\}$ is bounded.
	This coercivity property easily follows from the lower bound
	\begin{equation}\label{eq:hdel.coer}
		h(z)\ge -\hat\sigma(u)+\epsilon_*\sum_{i=1}^n c_i\log c_i-Cu^{\nu}- C,
	\end{equation}
	valid for suitable $\nu\in[0,1)$, $\epsilon_*>0$ (see Lemma~\ref{l:S}),  together with the sublinearity of the increasing function $\hat\sigma(u)$ as $u\to\infty$ (see~\eqref{eq:h2}).
	
	In order to define an admissible truncation function, we consider  as in~\cite[Proof of Prop.~5]{Fischer_2017} for $L\ge 2$ an auxiliary function
	$\theta_L\in C^\infty(\mathbb{R})$ satisfying $\theta_L(s)=s$ for $|s|\le L$, 
	$0\le\theta_L'\le 1$, 
	\begin{align}\label{eq:decay.theta''}
		|\theta''_L(s)|\lesssim \tfrac{C}{1+|s|\log(|s|+\mathrm{e})} 
	\end{align}for all $s\in\mathbb{R}$ and $\theta'_L(s)=0$ for $|s|\ge L^C$ for some sufficiently large  constant $C\ge 2$, which is kept fixed throughout the proof.
	
	To derive the entropy dissipation inequality, we would like to choose the truncation function $\theta_L(h_\delta(\cdot))$ and the test function $\psi\equiv 1$ in the renormalised formulation satisfied by $z$, and then let $L\to\infty$ and subsequently $\delta\to0$. Since derivatives of $h_\delta$ are in general not bounded as $u\searrow0$ or $c_i\searrow0$, further regularisation is required. We let 
	\begin{equation*}
		h_{\delta,\ve}(u,c) = h_\delta(z)+\hat\sigma(u)-\hat\sigma(u+\ve),
	\end{equation*}
	abbreviate for $z=(u,c_1,\dots,c_n)$
	$$z^{\tilde\ve}:=(u,c_1+\tilde\ve,\dots,c_n+\tilde\ve),\quad\tilde\ve\in(0,1],$$
	and then consider the function 
	$z\mapsto h_{\delta,\ve}(z^{\tilde\ve})\in C^2([0,\infty)^{1+n})$.
	Thanks to~\eqref{eq:hdel.coer}, it is easy to see that 
	for fixed $\delta\in(0,1]$ sublevel sets of $z\mapsto h_{\delta,\ve}(z^{\tilde\ve})$ are bounded; in fact 
	\begin{align}\label{eq:hreg}
		h_{\delta,\ve}(z^{\tilde\ve})\ge \tfrac\delta2u+\epsilon_*\sum_{i=1}^n c_i\log_+ c_i-C_\delta
	\end{align}
	for $\delta,\ve,\tilde\ve\in(0,1]$. 
	Hence, the $C^2$-function $\xi(z):=\theta_L(h_{\delta,\ve}(z^{\tilde\ve}))$ has compactly supported derivative $D\xi$, and is thus an admissible truncation in eq.~\eqref{eq:118} (cf.~Remark~\ref{rem:testfk}). This, combined with the choice $\psi\equiv 1$  in eq.~\eqref{eq:118}, yields for a.e.\ $T<T^*$
	\begin{equation}\label{eq:115*n}
		\begin{split}
			LHS:=&\eval{\int_\Om \theta_L(h_{\delta,\ve}(z^{\tilde\ve}))\,\dd x}_{t=0}^{t=T} 
			\\=&-\int_0^T\!\!\int_\Om \theta_L'(h_{\delta,\ve}(z^{\tilde\ve})) D_{ij}h_{\delta,\ve}(z^{\tilde\ve})\nabla z_j\cdot A_{ik}(z)\nabla z_k\,\dd x \dd t
			\\& -\int_0^T\!\!\int_\Om \theta_L''(h_{\delta,\ve}(z^{\tilde\ve}))D_jh_{\delta,\ve}(z^{\tilde\ve})D_ih_{\delta,\ve}(z^{\tilde\ve})\nabla z_j \cdot A_{ik}(z)\nabla z_k\,\dd x \dd t
			\\& +\int_0^T\!\!\int_\Om\theta_L'(h_{\delta,\ve}(z^{\tilde\ve}))D_ih(z^{\tilde\ve}) R_i(z)\,\dd x\dd t
			\\=&: I+II+III,
		\end{split}
	\end{equation}
	where we recall the summation convention (see Notations~\ref{ssec:notations}).
	In the reaction term we have used the fact that $D_ih_{\delta,\ve}(z)=D_ih(z)$ whenever $i\not=0$ together with $R_0\equiv0$.
	
	We will establish the asserted inequality~\eqref{eq:edin} by taking the $\liminf$ of the LHS and the 
	$\limsup$ of the RHS of the above equation~\eqref{eq:115*n} as $\tilde\ve\to0$, $L\to\infty$ and $\delta,\ve\to0$,  in the stated order. We perform the corresponding  limits separately in $LHS$ and  in each of the three terms $I, II,III$.
	Below we use, without explicit reference, the following basic properties satisfied under the hypotheses of Model~\Mzero{}, see Lemma~\ref{l:modelBounds}:
	\begin{align}\label{eq:700}
		&a(z)|\nabla u|^2\sim |\nabla u|^2,
		\\&	\scrp(z)\gtrsim \sum_{i=1}^n|\nabla \sqrt{c_i}|^2+|\sqrt{\gamma}\,\nabla u|^2
		+|\sqrt{m}\nabla D_0h(z)|^2,
	\end{align}
	where $\gamma(u,c)=-\hat\sigma''(u)-\sum_{l=1}^n\tfrac{w_l''(u)}{w_l(u)}c_l$, 
	\begin{align}\label{eq:702}
		|A_{ik}(z)\nabla z_k|&\lesssim \sqrt{c_i}\sqrt{\scrp(z)}\qquad\text{for }i\ge1,
		\\|A_{0k}(z)\nabla z_k|&\lesssim |\nabla u|+\sqrt{m}\sqrt{\scrp(z)}.\label{eq:703}
	\end{align}
	\smallskip
	
	\emph{LHS:}\\
	The limit $\tilde\ve\to0$ of the $LHS$ is immediate due to the boundedness of $\theta_L$, and yields
	\begin{align}
		\eval{\int_\Om \theta_L(h_{\delta,\ve}(z))\,\dd x}_{t=0}^{t=T} .
	\end{align}
	We next take $$\liminf_{\ve,\delta\to\infty}\liminf_{L\to\infty}$$ of the last expression using a combination of the dominated convergence theorem and Fatou's lemma.
	At initial time, we estimate using the lower and upper bounds on $h$ in Lemma~\ref{l:S}
	$$|\theta_L(h_{\delta,\ve}(\zin))|\le |h_{\delta,\ve}(\zin)|\lesssim \uin+|\hat\sigma_{-}(\uin)|+\sum_{i=0}^n\cin_i\log_+\cin_i+1.$$
	We can hence use dominated convergence to deduce that, as $L\to\infty$ and $\delta,\ve\to0$,
	\begin{align}
		\int_\Om \theta_L(h_{\delta,\ve}(\zin))\,\dd x \to \int_\Om h(\zin)\,\dd x.
	\end{align}
	To deal with the integral at time $t=T$, we first observe that, thanks to the regularity $u\in L^\infty_\loc([0,T^*),L^1(\Om))$ and~\eqref{eq:hdel.coer}, the negative part of $\theta_L(h_{\delta,\ve}(z(T,\cdot)))$ is controlled pointwise in $x$, for a.e.\ $T\in(0,T^*]$, by an integrable function,
	uniformly in $L,{\delta,\ve}$, and its integral can thus be shown to converge in the same way as the term at initial time. For the positive part of $\theta_L(h_{\delta,\ve}(z(T,\cdot)))$ we use Fatou's lemma:
	\begin{align}
		&	\int_\Om \max\{h_{\delta,\ve}(z(T,\cdot)),0\}\,\dd x \le\liminf_{L\to\infty}\int_\Om \max\{\theta_L(h_{\delta,\ve}(z(T,\cdot))),0\}\,\dd x,
		\\&	\int_\Om \max\{h(z(T,\cdot)),0\}\,\dd x\le\liminf_{\delta,\ve\to0}\int_\Om \max\{h_{\delta,\ve}(z(T,\cdot)),0\}\,\dd x.
	\end{align}
	In combination, we infer 
	\begin{align}\label{eq:lhs.liminf}
		\eval{\int_\Om h(z)\,\dd x}_{t=0}^{t=T}\le\liminf_{\delta,\ve\to0}\liminf_{L\to\infty}\lim_{\tilde\ve\to0}	\eval{\int_\Om \theta_L(h_{\delta,\ve}(z^{\tilde\ve}))\,\dd x}_{t=0}^{t=T}.
	\end{align}
	
	\medskip
	
	\emph{Diffusive dissipation term I:}\\
	We assert that 
	\begin{equation}
		\limsup_{\delta,\ve\to0}\limsup_{L\to\infty}\limsup_{\tilde\ve\to0}\Big(	-\int_0^T\!\!\int_\Om \theta_L'(h_{\delta,\ve}(z^{\tilde\ve}))\nabla z_j\cdot D_{ji}h_{\delta,\ve}(z^{\tilde\ve})A_{ik}(z)\nabla z_k\,\dd x \dd t\Big)
	\end{equation}
	can be bounded above by the non-positive term
	\begin{align}
		-\int_0^T\!\!\int_\Om  \nabla z_j\cdot D_{ji}h(z)A_{ik}(z)\nabla z_k\,\dd x \dd t.
	\end{align}
	
	To show this, we will mainly rely on the dominated convergence theorem.
	We therefore start by listing several uniform pointwise estimates on the terms involved.
	
	We first estimate for $i,j$ fixed the term
	\begin{align}
		\iuu:=	\theta_L'(h_{\delta,\ve}(z^{\tilde\ve})) D_{ji}h_{\delta,\ve}(z^{\tilde\ve})\nabla z_j\cdot \sum_kA_{ik}(z)\nabla z_k,
	\end{align}
	where we recall that $0\le \theta'_L\le 1.$
	
	Case $i,j\ge1:$ in this case $D_{ij}h_{\delta,\ve}(z^{\tilde\ve})=\tfrac{1}{c_i+\tilde\ve}\delta_{ij}$ and thus
	\begin{align}
		|\iuu|\le|\tfrac{1}{c_i+\tilde\ve}\nabla c_i\cdot \sum_kA_{ik}(z)\nabla z_k|
		\lesssim \tfrac{1}{c_i}|\nabla c_i|\sqrt{c_i}\sqrt{\scrp(z)}\lesssim \scrp(z).
	\end{align}
	
	Case $i\ge1,j=0:$
	observing that $-D_{i0}h_{\delta,\ve}(z^{\tilde\ve})=\tfrac{w_i'}{w_i}\lesssim \sqrt{\tfrac{-w_i''}{w_i}}$ (cf.~\eqref{eq:wi.M0}),
	we find 
	\begin{align}
		|\iuu|\le|\tfrac{w_i'}{w_i}\nabla u\cdot \sum_kA_{ik}(z)\nabla z_k|
		\lesssim |\sqrt{\tfrac{-w_i''}{w_i}c_i}\nabla u|\sqrt{\scrp(z)}\lesssim \scrp(z).
	\end{align}
	
	Case $i=0,j\ge0:$  we split the sum over $k$ in the definition of $\iuv$ into two parts:
	$$\sum_kA_{0k}(z)\nabla z_k=a(z)\nabla u+m(z)\nabla D_0h(z)$$
	and split $\iuv$ accordingly into $\iuv=\iuv^{(0)}+\iuv^{(1)}.$
	
	For $j\ge 1$ we estimate as above
	\begin{align}
		|\iuv^{(0)}|&\le|\tfrac{w_j'}{w_j}\nabla c_j\cdot a(z)\nabla u|
		\lesssim |\sqrt{\tfrac{-w_j''}{w_j}c_j}\nabla u||\nabla \sqrt{c_j}|
		\lesssim \scrp(z).
	\end{align}
	For $j=0$, we have $p_{00}^{(0)}=\theta_L'(h_{\delta,\ve}(z^{\tilde\ve}))D_{00}h_{\delta,\ve}(z^{\tilde\ve})a(z)|\nabla u|^2\ge0$,  and since in the equation it comes with a minus sign, its integral can easily be handled using Fatou's lemma.
	
	\medskip 
	
	It remains to estimate the part~$\iuv^{(1)}$. 
	To deal with the limit $\tilde\ve\to0$ (for finite $L$), we estimate 
	\begin{align}
		|\nabla D_0h_{\delta,\ve}(z^{\tilde\ve})\cdot m(z)\nabla D_0h(z)|
		&\lesssim\sqrt{m}|\nabla D_0h_{\delta,\ve}(z^{\tilde\ve})|\sqrt{\scrp(z)}
	\end{align}
	and for $ |z^{\tilde\ve}|\le C(L,\delta)$
	\begin{align}
		\sqrt{m}|\nabla D_0h_{\delta,\ve}(z^{\tilde\ve})|
		&\le |D_{00}h_{\delta,\ve}(z^{\tilde\ve})\nabla u|+
		\sum_{j=1}^n|D_{0j}h_{\delta,\ve}(z^{\tilde\ve})\nabla c_j|
		\\&\lesssim_{L,\delta,\ve}  |\nabla u|+\sqrt{\scrp(z)}.
	\end{align}
	Thus the limit $\tilde\ve\to0$ can be handled using dominated convergence.

	We can now let $\tilde\ve=0$ and compute
	\begin{align}
		\nabla &D_0h_{\delta,\ve}(z)\cdot m(z)\nabla D_0h(z)
		\\&=m(z)|\nabla D_0h(z)|^2
		-(\hat\sigma''(u+\ve)-\hat\sigma''(u))\nabla u\cdot m(z)\nabla D_0h(z).
	\end{align}
	Note that the first term on the RHS is bounded above by $\scrp(z)\in L^1(\Om_T)$.
	Concerning the second term, we estimate
	\begin{align}
		|(\hat\sigma''(u+\ve)-\hat\sigma''(u))&\nabla u\cdot m(z)\nabla D_0h(z)|
		\\&\lesssim \sqrt{m}|(\hat\sigma''(u+\ve)-\hat\sigma''(u))\nabla u|\sqrt{\scrp(z)}
		\\&\lesssim |(1+\sqrt{-\hat\sigma''(u)})\nabla u|	\sqrt{\scrp(z)}
		\\&\lesssim \scrp(z)+|\nabla u|^2,
	\end{align}
	where in the penultimate step we have used hypothesis~\eqref{eq:704}.
	
	Combining the above estimates allows to take the successive limits 
	$$\limsup_{\ve,\delta\to0}\limsup_{L\to\infty}\limsup_{\tilde\ve\to0}\dots$$
	as above, thus yielding the asserted inequality for term~$I$.
	
	\medskip
	
	\emph{Remainder gradient term II:}\\
	We will show that \small
	\begin{equation}\label{eq:gradrem}
		\limsup_{L\to\infty}\limsup_{\tilde\ve\to0}\Big( -\int_0^T\!\!\int_\Om \theta_L''(h_{\delta,\ve}(z^{\tilde\ve}))D_jh_{\delta,\ve}(z^{\tilde\ve})\nabla z_j\cdot D_ih_{\delta,\ve}(z^{\tilde\ve}) A_{ik}(z)\nabla z_k\,\dd x \dd t\Big)\le0.
	\end{equation}\normalsize    
	As in the previous paragraph, the main task is to obtain uniform pointwise estimates on the terms involved, where here we can afford a dependence of our estimates on $\delta$ and $\ve$.
	We introduce for $i,j$ fixed the term
	\begin{align}
		\iru=- \theta_L''(h_{\delta,\ve}(z^{\tilde\ve}))D_jh_{\delta,\ve}(z^{\tilde\ve})\nabla z_j\cdot D_ih_{\delta,\ve}(z^{\tilde\ve}) \sum_kA_{ik}(z)\nabla z_k
	\end{align}
	and observe that, by \eqref{eq:decay.theta''} and~\eqref{eq:hreg}, for $L\ge L_0(\delta)$ (henceforth to be assumed)
	\begin{align}\label{eq:theta''}
		|\theta''_L(h_{\delta,\ve}(z^{\tilde\ve}))|\lesssim_\delta \tfrac{1}{1+(u+\sum_ic_i\log_+(c_i))\log(|z|_1+\mathrm{e})}.
	\end{align}
	
	Case $i,j\ge1:$ 
	in this case, 
	\begin{align}
		|\iru|\lesssim 
		|\theta''_L(h_{\delta,\ve}(z^{\tilde\ve}))|
		|\log(\tfrac{c_i+\tilde\ve}{w_i(u)})||\log(\tfrac{c_j+\tilde\ve}{w_j(u)})|
		\sqrt{c_i}\sqrt{c_j}|\nabla\sqrt{c_j}|\sqrt{\scrp(z)}.
	\end{align}
	Estimating 
	\begin{align}
		\sqrt{c_i}|\log(\tfrac{c_i+\tilde\ve}{w_i(u)})|\lesssim \sqrt{c_i}\log_+(c_i)+ \sqrt{w_i(u)}\log_+(w_i(u))+1,
	\end{align}
	we find, using~\eqref{eq:h2} and~\eqref{eq:theta''}, 
	\begin{multline}
		|\theta''_L(h_{\delta,\ve}(z^{\tilde\ve}))|\,\sqrt{c_i}|\log(\tfrac{c_i+\tilde\ve}{w_i(u)})|\,\sqrt{c_j}|\log(\tfrac{c_j+\tilde\ve}{w_j(u)})|
		\\\lesssim 	|\theta''_L(h_{\delta,\ve}(z^{\tilde\ve}))|(\sqrt{c_i}\log_+(c_i)+ \sqrt{u}+1)(\sqrt{c_j}\log_+(c_j)+ \sqrt{u}+1)
		\;	\lesssim_\delta \;1.\quad
	\end{multline}
	Thus, $|\iru|\lesssim_\delta  \scrp(z)$.
	
	Case $i\ge1,j=0:$
	here,
	\begin{align}
		|\iru|&\lesssim \sqrt{c_i}|\log(\tfrac{c_i+\tilde\ve}{w_i(u)})||D_0h_{\delta,\ve}(z^{\tilde\ve})\nabla u|
		\sqrt{\scrp(z)}\,|\theta''_L(h_{\delta,\ve}(z^{\tilde\ve}))|.
	\end{align}
	In view of the factor $\sqrt{c_i},$ this shows that $|\iru|\le C(L,\delta,\ve)|\nabla u|\sqrt{\scrp(z)}$ uniformly in $\tilde\ve$, allowing us to infer by dominated convergence
	\begin{align}
		\limsup_{\tilde\ve\to0}\int_0^T\!\!\int_\Om\iru \,\dd x \dd t\le 
		-\int_0^T\!\!\int_\Om \theta_L''(h_{\delta,\ve}(z))D_0h_{\delta,\ve}(z)\nabla u\cdot \log(\tfrac{c_i}{w_i(u)}) A_{ik}(z)\nabla z_k\,\dd x \dd t.
	\end{align}
	Since $\ve>0$ and $\tfrac{w'_l(u)}{w_l(u)}\lesssim \sqrt{\tfrac{-w''_l(u)}{w_l(u)}}$,
	we have the rough bound 
	\begin{equation}\label{eq:200}\begin{split}		
			|D_0h_{\delta,\ve}(z)\nabla u|&\lesssim_\ve |\nabla u|+\sum_l\sqrt{c_l}\sqrt{\gamma}|\nabla u|
			\\&\lesssim |\nabla u|+\sum_l\sqrt{c_l}\sqrt{\scrp(z)}.
		\end{split}
	\end{equation}
	Moreover, 
	\begin{align}
		|\log(\tfrac{c_i}{w_i(u)}) A_{ik}(z)\nabla z_k|\lesssim (\log_+(c_i)+1+\log_+(w_i(u)))\sqrt{c_i}\sqrt{\scrp(z)},
	\end{align}
	and hence
	\begin{align}
		|\theta_L''(h_{\delta,\ve}(z))&D_0h_{\delta,\ve}(z)\nabla u\cdot D_ih_{\delta,\ve}(z) A_{ik}(z)\nabla z_k|
		\lesssim_\ve \scrp(z)+|\nabla u|^2.
	\end{align}
	
	Case $i=0,j\ge0:$
	using the fact that for $|z|_1\le C(L,\delta)$
	\begin{align}
		|D_jh_{\delta,\ve}(z^{\tilde\ve})\nabla z_j||D_0h_{\delta,\ve}(z^{\tilde\ve})|
		|a(z)\nabla u+m(z)\nabla D_0h(z)|&\lesssim_{L,\delta, \ve}\scrp(z)+|\nabla u|^2,
	\end{align}
	one can take the limit $\tilde\ve\to0$
	\begin{align}
		\limsup_{\tilde\ve\to0}\big(& -\int_0^T\!\!\int_\Om \theta_L''(h_{\delta,\ve}(z^{\tilde\ve}))D_jh_{\delta,\ve}(z^{\tilde\ve})\nabla z_j\cdot D_0h_{\delta,\ve}(z^{\tilde\ve}) \sum_kA_{0k}(z)\nabla z_k\,\dd x \dd t\big)
		\\&\le -\int_0^T\!\!\int_\Om \theta_L''(h_{\delta,\ve}(z))D_jh_{\delta,\ve}(z)\nabla z_j\cdot D_0h_{\delta,\ve}(z) \sum_kA_{0k}(z)\nabla z_k\,\dd x \dd t.
	\end{align}
	To obtain $L$-uniform bounds of the integrand on the RHS, we split 
	$$\sum_kA_{0k}(z)\nabla z_k=a(z)\nabla u + m(z)\nabla D_0h(z).$$
	Using~\eqref{eq:200} we have for  $j\ge 1$
	\begin{multline}
		|\nabla z_jD_jh_{\delta,\ve}(z)||D_0h_{\delta,\ve}(z)a(z)\nabla u |
		\\\lesssim_\ve \sqrt{c_j}(\log_+(c_j)+1+\log_+(w_j(u)))(|\nabla u|\sqrt{\scrp(z)}+\sum_l\sqrt{c_l}\scrp(z)),
	\end{multline}
	and for $j=0$ 
	\begin{align}
		|\nabla uD_0h_{\delta,\ve}(z)||D_0h_{\delta,\ve}(z)a(z)\nabla u |
		\lesssim_{\epsilon}\big(|\nabla u|+\sqrt{|c|_1}\sqrt{\scrp(z)}\big)^2
		\lesssim |\nabla u|^2+|c|_1\scrp(z).
	\end{align}
	It remains to consider the term involving $m(z)$.
	We estimate for $j\ge1$
	\begin{multline}
		|D_jh_{\delta,\ve}(z)\nabla z_j\cdot D_0h_{\delta,\ve}(z) ||m(z)\nabla D_0h(z)|
		\\\lesssim_{\ve}\sqrt{c_j}(\log_+(c_j)+1+\log_+(w_j(u)))(1+\tfrac{\sum_lc_l}{1+u}) \sqrt{m}\scrp(z),
	\end{multline}
	where we used the fact that~$\tfrac{w_l'(u)}{w_l(u)}\lesssim\tfrac{1}{1+u}$ for all $l$
	allowing us to estimate
	\begin{align}
		|D_0h_{\delta,\ve}(z)|\lesssim_\ve 1+\tfrac{\sum_{l=1}^nc_l}{1+u}.
	\end{align}
	Similarly, for $j=0$ we estimate, using also the bound~\eqref{eq:200},
	\begin{multline}
		|D_0h_{\delta,\ve}(z)\nabla u\cdot D_0h_{\delta,\ve}(z) ||m(z)\nabla D_0h(z)|
		\\\lesssim_{\ve}(1+\tfrac{\sum_{l'=1}^nc_{l'}}{1+u})\big(|\nabla u|+\sum_{l=1}^n\sqrt{c_l}\sqrt{\scrp(z)}\big)\sqrt{m}\sqrt{\scrp(z)}.
	\end{multline}
	Thus, recalling the conditions~\eqref{eq:705},~\eqref{eq:704} on $m(z)$, we infer the $L$-uniform bound
	\begin{align}
		|\theta_L''(h_{\delta,\ve}(z))D_jh_{\delta,\ve}(z)\nabla z_j\cdot D_0h_{\delta,\ve}(z) \sum_kA_{0k}(z)\nabla z_k|\lesssim_{\delta,\ve}\scrp(z)+|\nabla u|^2.
	\end{align}
	
	Combining the above estimates, we can take the limits 
	$$\limsup_{L\to\infty}\limsup_{\tilde\ve\to0}\dots$$
	of term~$II$ and obtain~ineq.~\eqref{eq:gradrem} by the pointwise convergence $\theta_L''(s)\to0$ as $L\to\infty$.
	
	\medskip
	
	\emph{Reactions III:}\\
	Concerning the reaction term $III$, we first need to take care of the fact that $D_ih(z)$ is unbounded near $c_i=0$. By~\eqref{eq:hreg}, we have $|z|,|z^{\tilde\ve}|\le  C(L,\delta)$ unless
	$\theta_L'(h_{\delta,\ve}(z^{\tilde\ve}))=0$.
	Using the local $\epsilon_0$-H\"older regularity of $R_i$, we then compute for
	$|z^{\tilde\ve}|\le C(L,\delta)$
	\begin{equation*}
		\begin{split}
			D_ih(z^{\tilde\ve})R_i(z)&=D_ih(z^{\tilde\ve})[R_i(z)-R_i(z^{\tilde\ve})]
			+D_ih(z^{\tilde\ve})R_i(z^{\tilde\ve})
			\\&\le C(L,\delta)(1+ \sum_{i=1}^n\log(c_i+\tilde\ve))\tilde\ve^{\epsilon_0}+D_ih(z^{\tilde\ve})R_i(z^{\tilde\ve}).
		\end{split}
	\end{equation*}
	The first term in the last line converges, as $\tilde\ve\to0$, uniformly to zero on the set $\{ |z^{\tilde\ve}|\le C(L,\delta)\}$. The second term is non-positive by hp.~\ref{eq:hp.Rdiss}. 
	Thus, since $\theta'_L\ge0$ and $\lim_{L\to\infty}\theta'_L(s)=1$ for all $s\in\mathbb{R}$ we can use  Fatou's lemma to infer
	\begin{equation}\label{eq:800}
		\begin{split}
			\limsup_{L\to\infty}\limsup_{\tilde\ve\to0}\int_0^T\!\!\int_\Om\theta_L'(h_{\delta,\ve}(z^{\tilde\ve}))&D_ih(z^{\tilde\ve}) R_i(z)\,\dd x\dd t
			\\&
			\le \limsup_{L\to\infty}\int_0^T\!\!\int_\Om  \theta_L'(h_{\delta,\ve}(z))D_ih(z)R_i(z)\,\dd x\dd t
			\\&
			\le \int_0^T\!\!\int_\Om D_ih(z)R_i(z)\,\dd x\dd t.
		\end{split}
	\end{equation}
	Observe that the last line is independent of $\delta$ and $\ve$.
	
	\smallskip
	
	Put together,  the above inequalities and equation~\eqref{eq:115*n} imply the entropy dissipation inequality~\eqref{eq:edin}.
	\medskip
	
	\underline{Case~2: $0<s<t<T^*$.}
	
	We assert that the fact that $z$ is a renormalised solution in $\Om_{T^*}$ in the sense of Definition~\ref{def:renorm} implies that for a.e.~$0<s<t<T^*$, and all $\tilde\psi\in C^\infty([0,T^*)\times\bar\Om)$ 
	\begin{align}
		&\int_\Om\xi(z(t,\cdot))\tilde\psi(t,\cdot)\,\dd x-\int_\Om\xi(z(s,\cdot))\tilde\psi(s,\cdot)\,\dd x-\int_s^t\!\int_\Om\xi(z)\partial_\tau\tilde\psi\,\dd x\dd\tau
		\\&\qquad=
		-\int_s^t\!\int_\Om D_{ij}\xi(z)A_{ik}(z)\nabla z_k\cdot\nabla z_j\tilde\psi \,\dd x\dd\tau  \label{eq:111}
		\\&\qquad\quad -\int_s^t\!\int_\Om D_i\xi(z)A_{ik}(z)\nabla z_k\cdot\nabla \tilde\psi\,\dd x\dd\tau
		+\int_s^t\!\int_\Om D_i\xi(z)R_i(z)\tilde\psi\,\dd x\dd\tau
	\end{align}
	for all $\xi\in C^\infty(\mathbb{R}_{\ge0}^{1+n})$.
	
	This can be proved as follows: take $\eta\in C^\infty(\mathbb{R})$, $\eta'\ge0$, $\eta=0$ on $(-\infty,-1]$, $\eta=1$ on $[0,\infty)$, and define $\eta_{s,\delta}(\tau):=\eta(\tfrac{\tau-s}{\delta})$ for $0<\delta\ll 1$.
	Observe that $\eta_{s,\delta}(\tau)=0$ for $\tau\le s-\delta$ and $\eta_{s,\delta}(\tau)=1$ for $\tau\ge s$.
	In the renormalised formulation~\eqref{eq:118} with $T=t$, we choose the test function $\psi(\tau,x):=\tilde\psi(\tau,x)\eta_{s,\delta}(\tau)$. 
	Then, the corresponding right-hand side of~\eqref{eq:118} converges, as $\delta\to0$, to the right-hand side of eq.~\eqref{eq:111} by the dominated convergence theorem.
	The corresponding left-hand side takes the form 
	\begin{align}\label{eq:112}
		\int_\Om\xi(z(t,\cdot))\tilde\psi(t,\cdot)\,\dd x
		-\int_{s-\delta}^s\int_\Om\xi(z)\tilde\psi\,\dd x\,\partial_\tau\eta_{s,\delta}\dd\tau+O(\delta)
		-\int_s^t\int_\Om\xi(z)\partial_\tau\tilde\psi\,\dd x\dd\tau.
	\end{align}
	The second term in the last line can be rewritten as $\left(F\ast(\tfrac{1}{\delta}\eta'(-\frac{\cdot}{\delta}))\right)(s)$ for the measurable, bounded function $F(\tau):=-\int_\Om\xi(z(\tau,x))\tilde\psi(\tau,x)\,\dd x$, and
	since $\int_{\mathbb{R}}\eta'(-\tau)\dd\tau=1$, we have the convergence $\left(F\ast(\tfrac{1}{\delta}\eta'(-\tfrac{\cdot}{\delta}))\right)(s)\to F(s)=-\int_\Om\xi(z(s,x))\tilde\psi(s,x)\,\dd x$ for a.e.~$s\in(0,T^*)$ as $\delta\to0$. This establishes the asserted identity~\eqref{eq:111}. 
	
	From~\eqref{eq:111} we infer that, for a.e.~$0<s<T^*$, the function $(\tau,x)\mapsto z(s+\tau,x)$ is a renormalised solution in $(0,T^*-s)\times\Omega$ with initial data $z(s,\cdot)$. Now we can invoke Case~1 and deduce~\eqref{eq:edin.s}.
\end{proof}

\section{Exponential convergence to equilibrium}\label{sec:exp.conv}

\begin{proof}[Proof of Proposition~\ref{prop:expconv}]
	Let us first observe that the regularity hypotheses on $z=(u,c)$ together with 
	the bounds \eqref{eq:edin.s} and~\eqref{eq:ene.s} imply that $\scrp(z)+|\nabla u|^2\in L^1(\Om_T)$ for any $T<\infty$ and that $u\in L^\infty_\loc([0,\infty),L^2(\Om))$, $c_i\log c_i\in L^\infty_\loc([0,\infty),L^1(\Om))$ for all $i\in\{1,\dots,n\}$. Here, we also used the lower and upper bounds on $H(z)$ provided in Lemma~\ref{l:S}.
	
	The energy  and mass conservation properties $\int_\Om z_l(t,x)\,\dd x=\bar z_l$ for a.e.~$t>0$,  where $l\in\{0,1,\dots,n\}$, can be seen as follows. 
	In the renormalised formulation~\eqref{eq:118}, we choose 
	$\psi\equiv1$ and
	$\xi(z):=\varphi_l^E(z)$ for $E\ge 1$,
	where $\varphi_l^E(z)=E\varphi_l(E^{-1}z)$ for some $\varphi_l\in C^\infty(\mathbb{R}_{\ge0}^{1+n})$ with $\supp D\varphi_l$ compact and
	$\varphi_l(z)=z_l$ for $|z|_1<1$ (see~\cite{FHKM_2020} for an example of such $\varphi_l$).
	This gives
	\begin{equation}\label{eq:218}
		\begin{split}
			\int_\Om\varphi_l^E(z(T,\cdot))\,\dd x-\int_\Om&\varphi_l^E(\zin)\,\dd x
			\\&\qquad=
			-\int_0^T\!\!\int_\Om D_{ij}\varphi_l^E(z)A_{ik}(z)\nabla z_k\cdot\nabla z_j\,\dd x\dd t.
		\end{split}
	\end{equation}
	By the dominated convergence theorem, the LHS converges,  as $E\to\infty$,  to 
	$$\int_\Om z_l(T,x)\,\dd x-\int_\Om \zin_l\,\dd x.$$ 
	Since $D^2\varphi_l(z)=0$ for $|z|_1<1$, we have $\lim_{E\to\infty}D_{ij}\varphi_l^E(z)= 0$ for every $z\in\mathbb{R}_{\ge0}^{1+n}$. At the same time, using the bounds in Lemma~\ref{l:modelBounds} it is easy to see that for all $i,j\in\{0,\dots,n\}$
	\begin{align}
		|D_{ij}\varphi_l^E(z)\sum_{k=0}^nA_{ik}(z)\nabla z_k\cdot\nabla z_j|\lesssim \scrp(z)+|\nabla u|^2
	\end{align}
	uniformly in $E\ge 1$. Hence, the integral on the RHS of~\eqref{eq:218} converges to zero as $E\to\infty$ thanks to dominated convergence. 
	
	\medskip
	Let us now sketch the proof showing the exponential convergence to equilibrium.
	Below, $\epsilon_k$, $k=0,1,\dots,$ denote fixed positive constants.
	As we only consider Model~\Mzero{}, it suffices to take $\alpha\in(0,1]$.
	Conservation of the total energy and the mass of each species, combined with $\bar z$ being spatially constant, yields for a.e.\ time
	\begin{align}
		H_\rel(z,\bar z) &= H(z)-D_ih(\bar z)\int_\Om(z_i-\bar z_i)\,\dd x-H(\bar z) 
		\\&=  H(z)-H(\bar z).
	\end{align}
	Hence,  inequality~\eqref{eq:edin.s} gives for a.e.~$t\ge s\ge0$
	\begin{align}
		\eval{H_\rel(z(\tau),\bar z)}_{\tau=s}^{\tau=t} \le-\int_s^t\!\int_\Om\scrp(z)\,\dd x\dd\tau.
	\end{align}
	Writing $B(c,\bar c):=\sum_{i=1}^n b(c_i,\bar c_i)$, where $b(s,\bar s)=\bar s\lambda(s/\bar s)$, we have 
	by estimate~\eqref{eq:700a} and the logarithmic Sobolev inequality (cf.~\cite{MM_2018})
	\begin{align}\label{eq:219}
		\int_\Om\scrp(z)\,\dd x\ge \epsilon_0 \sum_{i=1}^n\int_\Om|\nabla \sqrt{c_i}|^2\,\dd x \ge \epsilon_1 \int_\Om B(c,\bar c)\,\dd x,
	\end{align}
	where $\epsilon_1 =\epsilon_1(\Om)>0$.
	
	Recalling the definition of~$g_\rel$ in~\eqref{eq:defgrel}, and using $\int u\,\dd x=\int \bar u\,\dd x$,~\eqref{eq:ene.s}, and the Poincar\'e--Wirtinger inequality, we can further estimate for some $\epsilon_2=\epsilon_2(\Om)>0$
	\begin{align}\label{eq:220}
		\eval{G_\rel(u(\tau),\bar u)}_{\tau=s}^{\tau=t}  
		\le -\epsilon_2\int_s^t\!\int_\Om|u-\bar u|^2\,\dd x\dd\tau +\mOns C\int_s^t\!\int_\Om\scrp(z)\,\dd x\dd\tau.
	\end{align}
	We next let $E\ge |\bar z|$, to be fixed later. Then, by uniform convexity, for $|z|\le E$,
	\begin{align}
		B(c,\bar c)+\alpha|u-\bar u|^2\gtrsim_{E,\alpha} |z-\bar z|^2\gtrsim_{E} h_\rel(z,\bar z).
	\end{align}
	(The argument leading to the second inequality is as in the proof of~\eqref{eq:coercA}.)
	
	At the same time,
	\begin{align}
		B(c,\bar c)+\alpha|u-\bar u|^2\ge \tfrac{1}{2}\left(\sum_{i=1}^nc_i\log c_i +\alpha u^2\right)- C(\bar z).
	\end{align}
	Hence, for $E=E(\bar z,\alpha)$ large enough, we obtain 
	\begin{align}
		B(c,\bar c)+\alpha|u-\bar u|^2\gtrsim_{E} h_\rel(z,\bar z).
	\end{align}
	Combining the above estimates and choosing $\alpha\in(0,1]$ such that
	$\alpha C\mOns\le \tfrac{1}{2}$ (with $C$ as in~\eqref{eq:220}), we infer
	\begin{align}
		\eval{\bigg[H_\rel(z(\tau),\bar z)+\alpha G_\rel(u(\tau),\bar u)\bigg]}_{\tau=s}^{\tau=t} 
		&\le-\frac{1}{2}\int_s^t\!\int_\Om\scrp(z)\,\dd x\dd\tau-\epsilon_2\alpha\int_s^t\|u-\bar u\|^2_{L^2(\Om)}\dd\tau
		\\&\le -\epsilon_3\int_s^t\bigg[H_\rel(z(\tau),\bar z)+\alpha G_\rel(u(\tau),\bar u)\bigg]\dd\tau,
	\end{align}
	where $\epsilon_3=\epsilon_3(\bar z,\alpha,\Omega)>0$.
	A version of Gronwall's inequality (see e.g.~\cite[p.~702]{FL_2016}) yields the asserted bound~\eqref{eq:expconvEntropy} for $\lambda=\epsilon_3>0$.
\end{proof}

Let us note that in the above proof we have not used the fact that $\pi_1\gamma^2|\nabla u|^2$ is dominated by $\scrp(z)$ (see~\eqref{eq:219}). If $\sigma(u)$ is sufficiently close to a linear function for $u\gg1$, e.g.~$\sigma(u)=u^{1-\epsilon(d)}$ for $\epsilon(d)>0$ small enough, this term may be exploited as in~\cite[Section~3]{MM_2018} to quantitatively improve the decay rate.

\section{Appendix}

\subsection{Auxiliary estimates}\label{sec:app}

\begin{lemma}[Estimates for Model~\Mzero]\label{l:modelBounds}
	Let the hypotheses of Model~\Mzero\ be satisfied.
	Then, formally, 
	\begin{subequations}
		\begin{align}\label{eq:u.M0}
			&\sum_{k=0}^nA_{0k}(z)\nabla z_k = a(z)\nabla z+m(z)\nabla D_0h(z),
			\\&\label{eq:70}\sum_{k=0}^nA_{ik}(z)\nabla z_k =
			m_i(z) \nabla D_ih(z)
			+   a(z)  c_i \tfrac{w'_i(u)}{w_i(u)} \nabla u \quad\text{ for }i\ge1,
		\end{align}
	\end{subequations}
	where $a(z)=\pi_1(z)\gamma(z)$ and $\gamma$ is given by~\eqref{eq:def.gamma}.\\
	Moreover, for any sufficiently regular function $z=(u,c_1,\dots, c_n)$ with positive components, we have the following estimates:
	
	Abbreviating
	$$	\scrp(z):=\nabla z: (D^2h(z)A(z)\nabla z)
	=\nabla D_ih(z)\cdot (\mathbb{M}_{il}(z)\nabla D_lh(z))$$
	and $\gamma(u,c):=-\hat\sigma''(u)-\sum_{l=1}^n\tfrac{w_l''(u)}{w_l(u)}c_l$, 
	one has
	\begin{align}\label{eq:700a}
		\scrp(z)\gtrsim \sum_{i=1}^n|\nabla \sqrt{c_i}|^2+|\sqrt{\gamma}\,\nabla u|^2+|\sqrt{m}\nabla D_0h(z)|^2,
	\end{align}
	and 
	\begin{align}\label{eq:701a}
		a(z)|\nabla u|^2\sim |\nabla u|^2.
	\end{align}
	Furthermore,
	\begin{align}\label{eq:704a}
		&|A(z)\nabla z|\lesssim 
		\Big(\max_{i=1,\dots,n}\sqrt{c_i}+\sqrt{\pi_1(z)}+\sqrt{m(z)}\Big)\sqrt{\scrp(z)},
		\\\label{eq:702a}
		&|\sum_{k=0}^nA_{ik}(z)\nabla z_k|\lesssim \sqrt{c_i}\sqrt{\scrp(z)}\qquad\text{for }i\ge1,
		\\&|\sum_{k=0}^nA_{0k}(z)\nabla z_k|\lesssim |\nabla u|+\sqrt{m}\sqrt{\scrp(z)}.\label{eq:703a}
	\end{align}
\end{lemma}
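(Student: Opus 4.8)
The plan is to reduce the whole lemma to a single algebraic identity plus bookkeeping with the structural hypotheses of Model~\Mzero{}. First I would compute the Hessian of $h$ from~\eqref{eq:S0}: $D_{ij}h=\tfrac1{c_i}\delta_{ij}$ and $D_{0i}h=-\tfrac{w_i'}{w_i}$ for $i,j\ge1$, and $D_{00}h=-\hat\sigma''(u)-\sum_{l}c_l\bigl(\tfrac{w_l''}{w_l}-(\tfrac{w_l'}{w_l})^2\bigr)=\gamma(u,c)+\sum_lc_l(\tfrac{w_l'}{w_l})^2$, with $\gamma$ as in~\eqref{eq:def.gamma}. A short computation with $\mu_l=\tfrac{w_l'}{w_l}c_l$ (and $\mu_0:=1$) then yields the key identity $\sum_{l=0}^n\mu_l\,\nabla D_lh(z)=\gamma(u,c)\,\nabla u$: the contribution $\sum_lc_l(\tfrac{w_l'}{w_l})^2\nabla u$ coming from $D_{00}h$ cancels against the analogous contributions from $\mu_l\nabla D_lh$, $l\ge1$, and the $\nabla c_j$-terms cancel pairwise. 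Substituting $\mathbb{M}=\diag(m,m_1,\dots,m_n)+\pi_1\,\mu\otimes\mu$ into $A_{ik}\nabla z_k=\mathbb{M}_{il}\nabla D_lh$ and using this identity together with $a=\pi_1\gamma$ immediately produces the flux identities~\eqref{eq:u.M0} and~\eqref{eq:70}. All of this is formal in the sense of Remark~\ref{rem:con.M}.

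Next, inserting the same decomposition of $\mathbb{M}$ into the quadratic form $\scrp(z)=\nabla D_ih\cdot\mathbb{M}_{il}\nabla D_lh$ and using the key identity gives the \emph{exact} formula
\[
\scrp(z)=m(z)\,|\nabla D_0h(z)|^2+\sum_{i=1}^nm_i(z)\,|\nabla D_ih(z)|^2+\pi_1(z)\,\gamma(z)^2\,|\nabla u|^2 .
\]
From here~\eqref{eq:701a} is immediate since $\pi_1\gamma\sim1$ under Model~\Mzero{}. For~\eqref{eq:700a}, the first and third terms already give $m|\nabla D_0h|^2$ and $\gamma|\nabla u|^2\sim\pi_1\gamma^2|\nabla u|^2$; for the concentration terms I would use that $\sqrt{c_i}\,\nabla D_ih=2\nabla\sqrt{c_i}-\sqrt{c_i}\tfrac{w_i'}{w_i}\nabla u$, which (together with $m_i\sim c_i$) yields $|\nabla\sqrt{c_i}|^2\lesssim m_i|\nabla D_ih|^2+c_i(\tfrac{w_i'}{w_i})^2|\nabla u|^2$, and then absorb the last term via hypothesis~\eqref{eq:wi.M0}: $(w_i')^2\lesssim-w_i''w_i$ gives $c_i(\tfrac{w_i'}{w_i})^2\lesssim-c_i\tfrac{w_i''}{w_i}\le\gamma$, so $c_i(\tfrac{w_i'}{w_i})^2|\nabla u|^2\lesssim\gamma|\nabla u|^2\lesssim\scrp(z)$.

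Finally, for the flux bounds~\eqref{eq:704a}--\eqref{eq:703a} I would read off from the exact formula for $\scrp$ the elementary bounds $\sqrt{m}\,|\nabla D_0h|\le\sqrt{\scrp}$, $\sqrt{m_i}\,|\nabla D_ih|\le\sqrt{\scrp}$, $\sqrt{\pi_1}\,\gamma|\nabla u|\le\sqrt{\scrp}$, hence also $|\nabla u|\lesssim\sqrt{\pi_1}\,\sqrt{\scrp}$ (using $\pi_1\gamma\sim1$). Combined with the consequence $c_i\tfrac{w_i'}{w_i}\lesssim\sqrt{c_i\gamma}$ of~\eqref{eq:wi.M0} and with $\sqrt{\pi_1\gamma}\sim1$, the identity~\eqref{eq:70} gives $|\sum_kA_{ik}\nabla z_k|\le m_i|\nabla D_ih|+a\,c_i\tfrac{w_i'}{w_i}|\nabla u|\lesssim\sqrt{c_i}\sqrt{\scrp}$ for $i\ge1$, which is~\eqref{eq:702a}; likewise~\eqref{eq:u.M0} gives $|\sum_kA_{0k}\nabla z_k|\le a|\nabla u|+m|\nabla D_0h|\lesssim|\nabla u|+\sqrt{m}\sqrt{\scrp}$ (using $a\sim1$), which is~\eqref{eq:703a}; and summing these two bounds over $i$ and estimating the leftover $|\nabla u|$ by $\sqrt{\pi_1}\sqrt{\scrp}$ yields~\eqref{eq:704a}. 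The only genuinely model-specific step, and the one I expect to require the most care, is the absorption via~\eqref{eq:wi.M0} in the second paragraph: it is precisely this bound that makes the weight $c_i(w_i'/w_i)^2$ of the ``wrong-direction'' gradient controllable by $\gamma$, hence by $\scrp(z)$. Everything else is routine once the identity $\sum_l\mu_l\nabla D_lh=\gamma\nabla u$ and the exact formula for $\scrp$ are in hand.
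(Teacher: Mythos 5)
Your proof is correct and follows the same route as the paper: the paper's own proof simply cites \cite[Lemma~2.1 and Lemma~2.3]{FHKM_2020} for \eqref{eq:700a}--\eqref{eq:704a} with $m\equiv0$ and notes the easy extension to $m\not\equiv0$, and the computations you carry out (the identity $\sum_l\mu_l\nabla D_lh=\gamma\nabla u$, the exact diagonalised formula for $\scrp$, and the absorption of $c_i(w_i'/w_i)^2$ via~\eqref{eq:wi.M0}) are precisely the ones contained in those cited lemmas.
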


\begin{proof}
	Identities~\eqref{eq:u.M0}--\eqref{eq:70} follow from a straightforward computation using the definition of $\mu_i$ (see~\cite{FHKM_2020}, if necessary).
	Except for the last term in~\eqref{eq:700a}, estimate~\eqref{eq:700a} is a consequence of~\cite[\EPestimate{}]{FHKM_2020}  (in~\cite{FHKM_2020}: $m\equiv0$). The additional control of $|\sqrt{m}\nabla D_0h(z)|^2$ for $m=m(z)\ge0$ easily follows from the definition of $\scrp(z)$ and $\mathbb{M}$. 
	Eq.~\eqref{eq:701a} is immediate since $a=\pi\gamma\sim 1$ by hypothesis.
	Estimate~\eqref{eq:704a} has been established in~\cite[\fluxbound{}, eq.~(2.10)]{FHKM_2020} for $m\equiv0$, and the current version thus follows estimate~\eqref{eq:700a}, which implies   the bound 
	$|m(z)\nabla D_0h(z)|\le \sqrt{m}\sqrt{\scrp(z)}.$
	Estimate~\eqref{eq:702a} is a consequence of the proof of~\cite[\fluxbound{}]{FHKM_2020}, 
	while estimate~\eqref{eq:703a} follows from the fact that~$a\sim 1$ and the bound on $m(z)\nabla D_0h(z)$ observed before.
\end{proof}

\begin{lemma}\label{l:applic.M0}
	Model~\Mzero{} (see page~\pageref{eq:wi.M0}) fulfils conditions~\ref{it:hC3}--\ref{eq:hp.A00} and~\ref{hp:grad.flux.control} of Theorem~\ref{thm:wkuniq} when assuming additionally the regularity hypotheses $\hat\sigma, w_i\in C^4((0,\infty))$, \ref{it:R.locLip} and $m,m_i,\pi_1\in C^{0,1}_\loc((0,\infty)^{1+n})$. 
\end{lemma}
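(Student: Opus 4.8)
The plan is to verify each of the hypotheses \ref{it:hC3}--\ref{eq:hp.A00} and \ref{hp:grad.flux.control} in turn, relying almost entirely on the identities and bounds collected in Lemma~\ref{l:modelBounds} together with the structural definitions of Model~\Mzero{}. First, \ref{it:hC3} is immediate: the entropy density $h$ is by definition of Model~\Mzero{} of the form~\eqref{eq:S0} with $\hat\sigma,w_i$ satisfying~\eqref{eq:h2}, and the extra assumption $\hat\sigma,w_i\in C^4((0,\infty))$ gives $h\in C^4((0,\infty)^{1+n})$. Hypothesis~\ref{it:react} is exactly part of the definition of Model~\Mzero{} (with $R_0\equiv0$ and \ref{eq:hp.Rdiss} built in), together with the added regularity assumption~\ref{it:R.locLip}.

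For \ref{eq:hp.Mnondeg}, I would read off from~\eqref{eq:mobility1}--\eqref{eq:ai} that $\mathbb{M}=\diag(m,m_1,\dots,m_n)+\pi_1\mu\otimes\mu$ with $\mu=(1,\mu_1,\dots,\mu_n)$, so that $\mathbb{M}_{0l}=\delta_{0l}m+\pi_1\mu_l$; setting $\widetilde{\mathbb M}_{0l}:=\pi_1\mu_l$ one checks by direct computation (this is the content of~\eqref{eq:u.M0} in Lemma~\ref{l:modelBounds}) that $\sum_l\widetilde{\mathbb M}_{0l}D_{lj}h=\delta_{0j}a$ with $a=\pi_1\gamma$; since $\pi_1\gamma\sim1$ by the Model~\Mzero{} hypothesis $\pi_1\sim1/\gamma$, we get $a\gtrsim1$, and $0\le m\lesssim\mOns$ is assumed, giving~\eqref{eq:hp.ueq1}. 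The continuity/local-Lipschitz regularity of $\mathbb{M}$ follows from $m,m_i,\pi_1\in C^{0,1}_\loc((0,\infty)^{1+n})$ together with the continuity condition on $\mathbb{M}$ at the boundary noted after~\eqref{eq:hp.pi1}. The non-degeneracy~\eqref{eq:Mnd} follows because the diagonal block $(m_i)_{i=1,\dots,n}=(c_ia_i)$ with $a_i\sim\kappa_{0,i}=1$ in Model~\Mzero{}, so $m_i\sim c_i\ge\iota$ when $\min_iz_i\ge\iota$, while the rank-one part $\pi_1\mu\otimes\mu$ is positive semi-definite; hence $\mathbb{M}\ge\diag(m,0,\dots,0)+\epsilon(\iota)\diag(0,1,\dots,1)$ with $\epsilon(\iota)\sim\iota$. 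Hypothesis~\ref{hp:trunc} then follows from~\eqref{eq:700a} and~\eqref{eq:702a}--\eqref{eq:703a}: on $\{|z|\le K\}$ one has $c_i\lesssim_K1$ and $m\lesssim1$, so~\eqref{eq:702a} gives $|\sum_jA_{ij}\nabla z_j|\lesssim_K\sqrt{\scrp(z)}$ for $i\ge1$, while~\eqref{eq:703a} plus~\eqref{eq:700a} (which controls $|\nabla u|^2\lesssim\scrp(z)$ since $\gamma\gtrsim1$ on bounded sets, using $\sup_{u\in(0,1]}\hat\sigma''<0$) gives the $i=0$ case; and~\eqref{eq:700a} directly bounds $|\nabla\sqrt{c_i}|^2$, hence $\chi_{\{|z|\le K\}}|\nabla c_i|=\chi_{\{|z|\le K\}}2\sqrt{c_i}|\nabla\sqrt{c_i}|\lesssim_K\sqrt{\scrp(z)}$. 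Hypothesis~\ref{eq:hp.A00} is contained in~\eqref{eq:703a} once one notes $a(z)\nabla u=\sum_kA_{0k}\nabla z_k-m\nabla D_0h(z)$ and uses $|\sqrt m\nabla D_0h(z)|\lesssim\sqrt{\scrp(z)}$, together with the bound $|\nabla u|\lesssim(1+u)\sqrt{\scrp(z)}$ coming from $\gamma\gtrsim 1$ for $u$ bounded away from $\infty$ and, for large $u$, from $a\sim1$ so that $a|\nabla u|^2\sim|\nabla u|^2\le\scrp(z)$.

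The main work, and the main obstacle, is verifying \ref{hp:grad.flux.control}: on $\{|z|_1\ge1\}$ one must show $|\nabla z||\sum_jA_{ij}(z)\nabla z_j|\lesssim|z|\scrp(z)+|z||\nabla u|^2$ for every $i$. Here I would split $|\nabla z|\le|\nabla u|+\sum_{l\ge1}|\nabla c_l|=|\nabla u|+\sum_{l\ge1}2\sqrt{c_l}|\nabla\sqrt{c_l}|$ and use~\eqref{eq:700a} to get $\sum_{l\ge1}2\sqrt{c_l}|\nabla\sqrt{c_l}|\lesssim\sqrt{|c|_1}\sqrt{\scrp(z)}$, so $|\nabla z|\lesssim|\nabla u|+\sqrt{|z|_1}\sqrt{\scrp(z)}$. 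For $i\ge1$, combine this with~\eqref{eq:702a}: $|\nabla z||\sum_jA_{ij}\nabla z_j|\lesssim(|\nabla u|+\sqrt{|z|_1}\sqrt{\scrp(z)})\sqrt{c_i}\sqrt{\scrp(z)}\lesssim\sqrt{|z|}|\nabla u|\sqrt{\scrp(z)}+|z|\scrp(z)\lesssim|z||\nabla u|^2+|z|\scrp(z)$ by Young's inequality (here one uses $|z|_1\lesssim|z|$ up to a constant since $|z|_1\ge1$, and $\sqrt{|z|}|\nabla u|\sqrt{\scrp(z)}\le\tfrac12|z||\nabla u|^2+\tfrac12\scrp(z)\le|z||\nabla u|^2+|z|\scrp(z)$). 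For $i=0$, use~\eqref{eq:703a}: $|\nabla z||\sum_jA_{0j}\nabla z_j|\lesssim(|\nabla u|+\sqrt{|z|}\sqrt{\scrp(z)})(|\nabla u|+\sqrt m\sqrt{\scrp(z)})$; expanding and using $m\lesssim1$ and Young's inequality yields $\lesssim|\nabla u|^2+\sqrt{|z|}|\nabla u|\sqrt{\scrp(z)}+\sqrt{|z|}\scrp(z)\lesssim|z||\nabla u|^2+|z|\scrp(z)$, again since $|z|\ge|z|_1/\sqrt{1+n}\gtrsim1$. The delicate point throughout is to keep careful track that all constants are allowed to depend only on model parameters (not on $|z|$), which is why the $\sqrt{c_i}$ and $\sqrt{m}$ gains in~\eqref{eq:702a}--\eqref{eq:703a} and the exact power of $|z|$ matter; once these bookkeeping details are in place the estimate closes by a routine application of Young's inequality.
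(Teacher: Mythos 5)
Your overall strategy matches the paper's: invoke the estimates collected in Lemma~\ref{l:modelBounds} and check each hypothesis in turn, and your verifications of~\ref{it:hC3}, \ref{it:react}, \ref{eq:hp.Mnondeg}, \ref{hp:trunc} and~\ref{hp:grad.flux.control} are correct and fill in details the paper leaves tacit.

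However, the argument you give for~\ref{eq:hp.A00} contains a genuine error. You claim that for large $u$ one has ``$a\sim1$ so that $a|\nabla u|^2\sim|\nabla u|^2\le\scrp(z)$''. The inequality $|\nabla u|^2\lesssim\scrp(z)$ is false in general: from~\eqref{eq:700a} one only controls $\gamma|\nabla u|^2\lesssim\scrp(z)$, and $\gamma(u,c)=-\hat\sigma''(u)-\sum_l c_l w_l''(u)/w_l(u)$ can tend to $0$ as $u\to\infty$ (for instance $\hat\sigma(u)=\log u$ gives $-\hat\sigma''(u)=u^{-2}$, so with $c=0$ one has $\gamma\to 0$). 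The hypotheses~\eqref{eq:h2} only give a uniform lower bound for $-\hat\sigma''$ on $(0,1]$, not globally. What does hold, and what the bound~\ref{eq:hp.A00} actually needs, is $\gamma\gtrsim(1+u)^{-2}$: this follows from $\pi_1\gamma\sim1$ (Model~\Mzero) together with the imposed bound $\sqrt{\pi_1}\lesssim 1+u$, since $\gamma\sim\pi_1^{-1}\gtrsim(1+u)^{-2}$. With that, $|\nabla u|\lesssim\gamma^{-1/2}\sqrt{\scrp(z)}\lesssim(1+u)\sqrt{\scrp(z)}$ and, since $a\sim1$, the desired estimate follows. (The paper phrases the same computation compactly as $|a(z)\nabla u|\lesssim\sqrt{\pi_1}\,|\sqrt{\gamma}\,\nabla u|\lesssim(1+u)\sqrt{\scrp(z)}$, using $\sqrt{\pi_1\gamma}\lesssim1$ to absorb the extra factor.) This is a small but real misstep; once repaired, your proof is correct and coincides with the paper's.
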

\begin{proof}
	The asserted estimates can be verified using Lemma~\ref{l:modelBounds}:
	the bounds in~\ref{hp:trunc} are immediate consequences of estimates~\eqref{eq:700a},~\eqref{eq:704a} combined with the bound $0\le\sqrt{\pi_1(z)}\lesssim (1+u)$. Condition~\ref{eq:hp.A00} follows from estimating $|a(z)\nabla u|\lesssim\sqrt{\pi_1} |\sqrt{\gamma}\nabla u|$ and using~\eqref{eq:700a}. Condition~\ref{hp:grad.flux.control} easily follows from~\eqref{eq:702a} and~\eqref{eq:703a}. (We have not aimed at optimising the conditions on $m(z)$, which are far from being sharp.) 
	The conditions in~\ref{eq:hp.Mnondeg} follow from the definition of $\mathbb{M}$ in~\eqref{eq:mobility1}.
\end{proof}

\begin{lemma}[Lower and upper entropy bounds]\label{l:S} 
	Let $h=h(u,c)$ be given by~\eqref{eq:S0} with~\eqref{eq:h2} being satisfied.
	There exist positive constants $\epsilon_\beta>0$, $\kappa_\beta\in(0,1)$ and $C_\beta,C\in(0,\infty)$ such that for all $(u,c)\in[0,\infty)^{1+n}$
	\begin{align}\label{eq:112ptw}
		h(u,c)&\ge -\hat\sigma(u)+\epsilon_\beta\sum_{i=1}^n c_i\log(c_i)-Cu^{\kappa_\beta}-C_\beta,
		\\\label{eq:112ptwii} h(u,c)&\le -\hat\sigma(u)+C\sum_{i=1}^nc_i\log(c_i)+C.
	\end{align}
\end{lemma}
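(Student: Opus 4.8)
The plan is to reduce both inequalities to termwise estimates of
$h(u,c)+\hat\sigma(u)=\sum_{i=1}^n\bigl(\lambda(c_i)-c_i\log w_i(u)\bigr)$,
using only the two features of $w_i$ recorded in~\eqref{eq:h2}: the upper growth bound $w_i(u)\lesssim 1+u^\beta$ with $\beta\in(0,1)$, and the lower bound $w_i(u)\ge w_i(0)>0$ together with monotonicity, which gives $\log w_i(u)\ge\log w_i(0)\ge -C$ for all $u\ge0$. Throughout I would use the elementary facts $\lambda(r)=r\log r-r+1$, $r\log r\ge -1/e$, and $r\le\tfrac12 r\log r+C$ for $r\ge0$ (the last since $r-\tfrac12 r\log r$ attains its maximum $e/2$ at $r=e$).

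For the \emph{upper bound}~\eqref{eq:112ptwii}, from $\log w_i(u)\ge -C$ one gets $-c_i\log w_i(u)\le Cc_i$, hence $\lambda(c_i)-c_i\log w_i(u)\le c_i\log c_i+1+Cc_i$; absorbing the linear term via $c_i\le\tfrac12 c_i\log c_i+C$ and summing over $i$ yields $h(u,c)+\hat\sigma(u)\le C\sum_{i=1}^n c_i\log c_i+C$ for a suitable finite $C$.

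For the \emph{lower bound}~\eqref{eq:112ptw}, the only term that can become negative is $-c_i\log w_i(u)$ for large $u$. For $u\le1$ the function $\log w_i(u)$ is bounded, so $-c_i\log w_i(u)\ge -Cc_i$ and one proceeds as below with $\log u$ replaced by $0$. For $u\ge1$, the growth bound gives $\log w_i(u)\le\beta\log u+C$, whence $\lambda(c_i)-c_i\log w_i(u)\ge c_i\log c_i+1-c_i\bigl(\beta\log u+C'\bigr)$. The key step is the parametric Young inequality $ab\le\theta(a\log a-a)+\theta\,e^{b/\theta}$ for $a\ge0$, $b\in\mathbb R$, $\theta\in(0,1]$, obtained from Fenchel--Young for $x\mapsto x\log x-x$ (conjugate $e^y$) by writing $ab=\theta\cdot a\cdot(b/\theta)$. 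Applying it with $a=c_i$, $b=\beta\log u+C'$ and $\theta=1-\epsilon_\beta$, the exponential term equals a constant times $u^{\beta/(1-\epsilon_\beta)}$. Choosing $\epsilon_\beta\in(0,1-\beta)$ makes $\kappa_\beta:=\beta/(1-\epsilon_\beta)\in(0,1)$, and one obtains $\lambda(c_i)-c_i\log w_i(u)\ge\epsilon_\beta c_i\log c_i+(1-\epsilon_\beta)c_i+1-Cu^{\kappa_\beta}\ge\epsilon_\beta c_i\log c_i-Cu^{\kappa_\beta}$. Summing over $i=1,\dots,n$ and combining with the case $u\le1$ (where $u^{\kappa_\beta}\le1$) gives~\eqref{eq:112ptw}.

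The only genuinely delicate point is securing a \emph{strictly sublinear} power $u^{\kappa_\beta}$, $\kappa_\beta<1$, on the right-hand side of the lower bound; this is exactly where the hypothesis $\beta<1$ enters, and why the splitting parameter $\theta=1-\epsilon_\beta$ must be chosen close enough to $1$ that $\beta/(1-\epsilon_\beta)<1$. Everything else is routine bookkeeping with the convex function $\lambda$; in particular the boundary value $c_i=0$ (where $\lambda(0)=1$) is covered automatically by both estimates provided the constants are taken large enough.
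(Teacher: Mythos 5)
Your proof is correct, and for the key lower bound it follows a genuinely different route from the paper. The paper splits the problematic term $c_i\log w_i(u)$ according to whether $w_i(u)\le c_i^{\beta_*}$ or $w_i(u)>c_i^{\beta_*}$ for a fixed intermediate exponent $\beta_*=(\beta+1)/2$; the first regime produces the $\beta_*c_i\log c_i$ contribution, the second produces a power of $w_i(u)$ alone, and the growth bound on $w_i$ then converts that into a sublinear power of $u$. You instead use the Legendre--Fenchel pairing for $\phi(x)=x\log x-x$, $\phi^*(y)=e^y$, applied in the rescaled form $ab\le\theta(a\log a-a)+\theta e^{b/\theta}$ with $a=c_i$ and $b=\beta\log u+C'$, which handles the coupling in one stroke. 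Both arguments hinge on the hypothesis $\beta<1$ in the same way: it is precisely what makes the resulting power of $u$ strictly sublinear. A minor advantage of your version is that the exponent $\kappa_\beta=\beta/(1-\epsilon_\beta)$ is tunable and can be taken arbitrarily close to $\beta$, whereas the paper's fixed split gives $\kappa_\beta=(3\beta+1)/\bigl(2(\beta+1)\bigr)$; this makes no difference for the lemma as stated, which only needs $\kappa_\beta<1$. The upper bound is handled the same way in both arguments (bounding $-\log w_i(u)$ below by $\log w_i(0)$ and absorbing the resulting linear term in $c_i$ into $c_i\log c_i$ plus a constant), and your treatment of the regime $u\le1$ and of $c_i=0$ is sound.
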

\begin{proof}
	Letting $\beta_*=\tfrac{\beta+1}{2}\in(\beta,1)$, we estimate using~\eqref{eq:h2}
	\begin{equation*}
		\begin{split}
			c_i\log(w_i(u)) &=
			c_i\log(w_i(u))\chi_{\{w_i(u)\le c_i^{\beta_*}\}}  + c_i\log(w_i(u))\chi_{\{w_i(u)> c_i^{\beta_*}\}} 
			\\&\le \beta_* c_i\log(c_i)+ w_i(u)^{\frac{1}{\beta_*}}\log(w_i(u))+C
			\\&\le \beta_* c_i\log(c_i)+C u^{\frac{1}{2}(1+\beta/\beta_*)}+C.
		\end{split}
	\end{equation*}
	Thus,
	\begin{equation*}
		\begin{split}
			h(u,c)&=-\hat\sigma(u)+\sum_{i=1}^n\big(\lambda(c_i)-c_i\log(w_i(u))\big)
			\\&\ge -\hat\sigma(u)+\sum_{i=1}^n\big((1-\beta_*)c_i\log(c_i)-c_i+1\big)-C u^{\frac{1}{2}(1+\beta/\beta_*)}-C.
		\end{split}
	\end{equation*}
	This yields~\eqref{eq:112ptw} with $\kappa_\beta:=\frac{1}{2}(1+\beta/\beta_*)<1$, $\epsilon_\beta=\tfrac{1}{2}(1-\beta_*)>0$ and a suitable constant $C_\beta<\infty$.
	
	Estimate~\eqref{eq:112ptwii} easily follows from the hypothesis that $w_i(0)>0$ for all $i$ (see also the proof of~\lowerBdS{} in~\cite{FHKM_2020}).
\end{proof}

\begin{lemma}[Minimum principle]\label{l:u.positivity}
	In addition to the hypotheses of Theorem~\ref{thm:wkuniq} assume that $\mOns=0$. 
	Let $T\in(0,T^*)$ and $\underline{u}:=\inf_{\Om_T} \uin>0$. 
	Then the energy component $u$ of the dissipative renormalised solution $z=(u,c)$ satisfies $u\ge \underline{u}$ almost everywhere in $\Om_T$.
\end{lemma}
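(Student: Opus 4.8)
The plan is to establish the minimum principle for the energy component $u$ by exploiting the scalar-like structure of the $u$-equation that arises when $\mOns=0$ (so $m\equiv0$ and the flux is simply $a(z)\nabla u$), following the standard strategy of testing the weak formulation of the energy equation with a truncation of the negative part of $u-\underline{u}$.

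First I would recall that when $\mOns=0$ the weak formulation of the energy component reads, for all admissible $\varphi$,
\begin{equation*}
	\int_\Om u(T,\cdot)\varphi(T,\cdot)\,\dd x-\int_\Om\uin\varphi(0,\cdot)\,\dd x-\int_0^T\!\!\int_\Om u\,\partial_t\varphi\,\dd x\dd t
	= -\int_0^T\!\!\int_\Om a(z)\nabla u\cdot\nabla\varphi\,\dd x\dd t,
\end{equation*}
and that by hypothesis $a(z)\gtrsim 1$ with $\int_0^T\!\!\int_\Om a(z)|\nabla u|^2\,\dd x\dd t<\infty$, so in particular $\nabla u\in L^2(\Om_T)$ and $u\in L^\infty_\loc(I;L^2(\Om))$ (from the energy inequality~\eqref{eq:ene}). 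The idea is to pick, for a smooth nondecreasing convex function $\Phi\colon\mathbb{R}\to[0,\infty)$ with $\Phi\equiv0$ on $[\underline{u},\infty)$, $\Phi'\le 0$, $\Phi'$ bounded, the test function $\varphi=\Phi'(u)$, or rather a time-truncated, mollified version thereof, and compute. By the chain rule $\nabla\varphi=\Phi''(u)\nabla u$ with $\Phi''\ge 0$, so $-\int a(z)\nabla u\cdot\nabla\varphi=-\int a(z)\Phi''(u)|\nabla u|^2\le 0$. On the left-hand side the time terms combine, after justification, into $\int_\Om\Phi(u(T,\cdot))\,\dd x-\int_\Om\Phi(\uin)\,\dd x$, and since $\Phi(\uin)=0$ because $\uin\ge\underline{u}$ a.e., one obtains $\int_\Om\Phi(u(T,\cdot))\,\dd x\le 0$, hence $\Phi(u(T,\cdot))=0$ a.e., i.e. $u(T,\cdot)\ge\underline{u}$ a.e.\ for a.e.\ $T\in(0,T^*)$. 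Choosing, e.g., a smooth convex approximation of $s\mapsto(s-\underline{u})_-^2$ and passing to the limit makes this rigorous.

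The steps in order: (i) verify that $\varphi=\Phi'(u)$ (with $\Phi'$ Lipschitz and bounded, $\Phi'=0$ on $[\underline u,\infty)$) is an admissible test function in~\eqref{eq:u.weak}, using $\nabla u\in L^2(\Om_T)$ and $u\in L^\infty_t L^2_x$ together with the improved integrability of the flux $a(z)\nabla u$ (here $m\equiv 0$ simplifies matters compared to the argument in the proof of Lemma~\ref{l:evol.grel}, where $\mOns$ could be $1$); one should first work with a time-mollification $\varphi=\Phi'(u_\delta)$ or use the test function $\varphi=\Phi'(u)\eta_{\tau}$ with a temporal cutoff and pass to the limit as in the "Case~2" argument of the proof of Proposition~\ref{prop:edin}. (ii) Carry out the chain-rule computation and discard the good sign from the diffusion term. (iii) Conclude $\int_\Om\Phi(u(T,\cdot))\,\dd x\le\int_\Om\Phi(\uin)\,\dd x=0$. (iv) Approximate the non-smooth $\Phi(s)=(s-\underline u)_-^2$ by smooth convex functions and pass to the limit to deduce $u\ge\underline u$ a.e.\ in $\Om_T$.

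The main obstacle I expect is the rigorous justification of the admissibility of $\varphi=\Phi'(u)$ as a test function, since $u$ itself is only of the regularity of a weak solution of a (degenerate-coefficient) scalar parabolic equation: one must handle the time derivative carefully. This is overcome by the standard Steklov-averaging / time-mollification argument, exactly as carried out for the temporal cutoff $\eta_{s,\delta}$ in the proof of Proposition~\ref{prop:edin} (Case~2) and for the admissibility of Lipschitz test functions in the proof of Lemma~\ref{l:evol.grel}; the key point enabling it is that for $\mOns=0$ the flux reduces to $a(z)\nabla u$ with $a(z)\gtrsim 1$ and $a(z)|\nabla u|^2\in L^1(\Om_T)$, which, combined with Gagliardo--Nirenberg interpolation of $u$, gives $a(z)\nabla u\in L^s_\loc(I;L^s(\Om))$ for some $s>1$, enough to run the approximation. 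Everything else is a routine convexity computation.
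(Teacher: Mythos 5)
Your proposal is correct and matches the paper's strategy: test the weak energy equation (which, for $\mOns=0$, after integration by parts in time gives $\int_0^{T'}\langle\partial_t u,\varphi\rangle\,\dd t=-\int_0^{T'}\!\!\int_\Om a(z)\nabla u\cdot\nabla\varphi\,\dd x\dd t$) with the derivative of a convex function vanishing on $[\underline u,\infty)$ (the paper uses $\Phi(s)=\tfrac12(s-\underline u)_-^2$ directly, so $\varphi=(u-\underline u)_-$), exploit the non-negative sign of $\int a(z)|\nabla(u-\underline u)_-|^2$ and $\Phi(\uin)=0$, and conclude $u\ge\underline u$ a.e. The only divergence is in how the chain-rule/time-derivative step is made rigorous: the paper mollifies $u$ in \emph{space} with a symmetric kernel $\rho_\ve=\tilde\rho\ast\tilde\rho$ and a partition of unity $(\chi_k)$, passing through the commutator-type arguments of \cite[Lemma~6.1]{FHKM_2020}, \cite[Lemma~12]{CJ_2019_existence}, \cite[Lemma~4]{Fischer_2015}, whereas you propose Steklov/time-mollification together with smooth approximations of $\Phi$; both are standard and deliver the result, but beware that the passages you cite in support (Case~2 of Proposition~\ref{prop:edin}, the admissibility step in Lemma~\ref{l:evol.grel}) deal with temporal cutoffs and Lipschitz-in-$(t,x)$ test functions respectively, not with nonlinear functions $\Phi'(u)$ of the solution itself, so they are not quite the right references for justifying this particular step.
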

\begin{proof}[Sketch proof]
	The hypotheses imply that $u\in L^\infty_\loc(I;L^2(\Om))$ and that there exists $r>1$ such that $a(z)\nabla u\in L^r_\loc(I;L^r(\Om))$, $\partial_tu\in L^r_\loc(I;(W^{1,r'}(\Om))^*)$, $\tfrac{1}{r}+\tfrac{1}{r'}=1$. The weak formulation of the energy component~\eqref{eq:u.weak} can therefore be integrated by parts with respect to time to give 
	\begin{align}\label{eq:u.weakParts}
		\int_0^{T'}\langle\partial_tu,\varphi\rangle\,\dd t
		= -\int_0^{T'}\!\!\int_\Om a(z)\nabla u\cdot\nabla\varphi\,\dd x\dd t.
	\end{align}
	for a.a.\ $T'\in(0,T]$.
	Ignoring regularity issues for the moment and testing the equation with $\varphi=(u-\underline{u})_-$ leads to
	\begin{align}\label{eq:minprFormal}
		\eval{\frac{1}{2}\int_\Om |(u-\underline{u})_-|^2\,\dd x}_{t=0}^{t={T'}}
		+\int_0^{T'}\!\!\int_\Om a(z)|\nabla(u-\underline{u})_-|^2\,\dd x\dd t&=0.
	\end{align}
	This implies that $(u-\underline{u})_-=0$ and hence $u\ge\underline u$ a.e.\ in $\Om_T$.
	
	To make the argument rigorous, one considers a smooth partition of unity $(\chi_k)_{k=1}^N$ on $\bar\Om$ as in the proof of the $L^2$ identities in~\cite[\ustrong]{FHKM_2020},  see also~\cite[Lemma~12]{CJ_2019_existence} and~\cite[Lemma~4]{Fischer_2015}. 
	For  simplicity, we only outline the reasoning in the case of $\psi:=\chi_k$ being compactly supported in $\Om$, and refer, for the general case, to the first and the third of the references provided before. 
	
	Denote by $\tilde\rho$ the standard mollifying kernel, let $\rho:=\tilde\rho\ast\tilde\rho$ and $\rho_\ve(x):=\tfrac{1}{\ve^d}\rho(\tfrac{x}{\ve})$.
	Then, for $\ve>0$ small enough (only depending on $\mathrm{dist}(\supp\psi,\partial\Om)$), 
	choose in~\eqref{eq:u.weakParts} the test function 
	$$\varphi=\rho_\ve\ast((\rho_\ve\ast u-\underline{u})_-\psi),$$
	which lies in $W^{1,r}_\loc(I;H^s(\Om))$ for any $s\in\mathbb{N}$. 
	Abbreviate $u_\ve=\rho_\ve\ast u$ and compute
	\begin{align}
		\partial_tu_\ve (u_\ve-\underline{u})_-\psi
		=\tfrac{1}{2}\tfrac{\dd}{\dd t}|(u_\ve-\underline{u})_-|^2\psi.
	\end{align}
	One the other hand, the term
	\begin{align}
		\int_0^{T'}\!\!\int_\Om \rho_\ve\ast(a(z)\nabla u)\cdot\nabla((u_\ve-\underline{u})_-\psi)\,\dd x\dd t
	\end{align}
	can be shown to converge to 
	\begin{align}
		\int_0^{T'}\!\!\int_\Om a(z)\nabla u\cdot\nabla((u-\underline{u})_-\psi)\,\dd x\dd t
	\end{align}
	by arguing similarly as in the proof of the $L^2$-energy identity in~\cite[\ustrong]{FHKM_2020}, see also~\cite[Lemma~12]{CJ_2019_existence}, where one should use the fact that $\|(u_\ve-\underline u)_-\|_{L^\infty}\le \underline u$, which follows from the non-negativity of $u$. 
	Thus, 
	\begin{align}
		\eval{\frac{1}{2}\int_\Om |(u-\underline{u})_-|^2\chi_k\,\dd x}_{t=0}^{t={T'}}+	\int_0^{T'}\!\!\int_\Om a(z)\nabla u\cdot\nabla((u-\underline{u})_-\chi_k)\,\dd x\dd t=0,
	\end{align}
	and upon summation over $k$ one arrives at~\eqref{eq:minprFormal}.
\end{proof}

\subsection{Entropy dissipation inequality for (M1.i) along the construction}\label{ssec:ED.ren}

The purpose of this paragraph is to show that the renormalised solutions constructed in~\cite[Theorem 1.8]{FHKM_2020} obey, for almost all $T>0$, the entropy dissipation inequality \eqref{eq:edin}.
We only provide the key step, which consists in taking the limit $\ve\to0$ in an entropy dissipation inequality analogous to \eqref{eq:edin} at the level of the approximate solutions $(u^\ve,c^\ve)$.  
Moreover, we focus on a $\liminf$-estimate for the diffusive entropy dissipation $\int_0^T\!\!\int_\Om \scrp(z)\,\dd x \dd t$, since it is primarily this quantity which takes a more involved form as compared to previous literature. The crucial point in handling this term is contained in the following lemma, which we state in a general form.

\begin{lemma}\label{l:ED.liminf}
	Let $\Om\subset\mathbb{R}^d$ be a bounded Lipschitz domain. 
	Assume that $N\in\mathbb{N}_+$ and let $T\in(0,\infty)$. Let further $D\subseteq [0,\infty)^N$ be a convex domain (possibly unbounded).
	Let $\mathbb{B}^\ve\in C(\ol D)^{N\times N}$, $\ve\in(0,1]$, be a family of  positive semi-definite symmetric matrices satisfying 
	\begin{align}
		\mathbb{B}^\ve\to \mathbb{B}\text{ locally uniformly in }\ol D
	\end{align}
	for some positive semi-definite symmetric matrix $\mathbb{B}\in C(\ol D)^{N\times N}$.
	Let $v^\ve=(v^\ve_1,\dots,v_N^\ve)$, $\ve\in(0,1]$, be a family of vector-valued functions
	such that  
	$v^\ve\in L^2(0,T;H^1(\Om)^N)$ and $v^\ve\in \ol D$ a.e.~in $(0,T)\times\Om$.
	Suppose that there exists $v\in L^2(0,T;H^1(\Om)^N)$ with $v\in \ol D$ a.e.~in $(0,T)\times\Om$
	such that
	\begin{align}
		&v^\ve\rightharpoonup v\quad\text{ in }L^2(0,T;H^1(\Om)^N),\label{eq:pp1}
		\\&v^\ve\to v\quad\text{ pointwise a.e.~in }(0,T)\times\Om,
	\end{align}
	and
	\begin{align}\label{eq:pp3}
		\sup_{\ve\in(0,1]}\|\sum_j\mathbb{S}^\ve_{ij}(v^\ve)\nabla v^\ve_j\|_{L^2((0,T)\times\Om)}^2\le C<\infty,
	\end{align}
	where~$\mathbb{S}^\ve(v)$ denotes the principal square root of~$\mathbb{B}^\ve(v)$.
	
	Then, as $\ve\to0$,
	\begin{align}
		\sum_j\mathbb{S}_{ij}^\ve(v^\ve)\nabla v^\ve_j
		\rightharpoonup \sum_j\mathbb{S}_{ij}(v)\nabla v_j
		\quad\text{ in }L^2((0,T)\times\Om)^d \quad\text{ for all }i\in\{1,\dots,N\},
	\end{align}
	where we have abbreviated $\mathbb{S}(v):=\mathbb{B}^\frac{1}{2}(v)$, the principal square root of $\mathbb{B}$.
	
	As a consequence,
	\begin{align}\label{eq:pp6}
		\|\sum_j\mathbb{S}_{ij}(v)\nabla v_j\|_{L^2((0,T)\times\Om)}^2\le \liminf_{\ve\to0}\|\sum_j\mathbb{S}_{ij}^\ve(v^\ve)\nabla v^\ve_j\|_{L^2((0,T)\times\Om)}^2
	\end{align}
	for all $i\in\{1,\dots,N\}$.
\end{lemma}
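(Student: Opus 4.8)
\textbf{Proof strategy for Lemma~\ref{l:ED.liminf}.}
The plan is to identify the weak $L^2$-limit of the sequence $w^\ve_i:=\sum_j\mathbb{S}^\ve_{ij}(v^\ve)\nabla v^\ve_j$ and show it equals $w_i:=\sum_j\mathbb{S}_{ij}(v)\nabla v_j$; once this is done, \eqref{eq:pp6} is the standard weak lower semicontinuity of the $L^2$-norm. First I would note that by \eqref{eq:pp3} the sequence $(w^\ve_i)_\ve$ is bounded in $L^2((0,T)\times\Om)^d$, so along a subsequence $w^\ve_i\rightharpoonup \zeta_i$ for some $\zeta_i\in L^2((0,T)\times\Om)^d$; since the limit will turn out to be independent of the subsequence, this suffices. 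The task is thus to prove $\zeta_i=w_i$, which I would do by testing against smooth compactly supported $\varphi\in C_c^\infty((0,T)\times\Om)^d$ and passing to the limit in $\int\!\!\int w^\ve_i\cdot\varphi$.

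The key step is to rewrite $w^\ve_i\cdot\varphi=\sum_j\big(\mathbb{S}^\ve_{ij}(v^\ve)\varphi\big)\cdot\nabla v^\ve_j$ and handle the product of a weakly convergent factor ($\nabla v^\ve_j\rightharpoonup\nabla v_j$ in $L^2$, by \eqref{eq:pp1}) with a strongly convergent one. For the latter, I would argue that $\mathbb{S}^\ve_{ij}(v^\ve)\to\mathbb{S}_{ij}(v)$ strongly in, say, $L^2_{\mathrm{loc}}$: indeed $\mathbb{S}^\ve\to\mathbb{S}$ locally uniformly on $\ol D$ (the principal square root depends continuously on a positive semidefinite symmetric matrix, so locally uniform convergence of $\mathbb{B}^\ve$ transfers to $\mathbb{S}^\ve$), and combined with the pointwise a.e.\ convergence $v^\ve\to v$ and the continuity of $\mathbb{S}$, one gets $\mathbb{S}^\ve_{ij}(v^\ve)\to\mathbb{S}_{ij}(v)$ pointwise a.e.; since $|\mathbb{S}^\ve_{ij}(v^\ve)\varphi|$ is supported in a fixed compact set $K\subset(0,T)\times\Om$ on which $v^\ve$ ranges in a bounded (hence, after intersecting with $\ol D$, relatively compact) subset of $\ol D$ — this uses that $v^\ve,v\in\ol D$ a.e.\ — the entries $\mathbb{S}^\ve_{ij}(v^\ve)$ are uniformly bounded on $\operatorname{supp}\varphi$, so dominated convergence gives $\mathbb{S}^\ve_{ij}(v^\ve)\varphi\to\mathbb{S}_{ij}(v)\varphi$ strongly in $L^2$. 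A small subtlety: if $D$ is unbounded the values $v^\ve$ need not lie in a compact set a priori; here I would use that on the fixed compact $K=\operatorname{supp}\varphi$ the pointwise-a.e.\ convergence plus Egorov's theorem lets one split $K$ into a set of small measure and one on which $v^\ve\to v$ uniformly, reducing to a relatively compact range — alternatively, since $\nabla v^\ve_j$ is only bounded in $L^2$, one passes through a further decomposition, but the Egorov argument is cleanest.

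With the splitting $\int\!\!\int w^\ve_i\cdot\varphi=\sum_j\int\!\!\int\big(\mathbb{S}^\ve_{ij}(v^\ve)\varphi\big)\cdot\nabla v^\ve_j$, the product of strong-$L^2$ convergence of the first factor and weak-$L^2$ convergence of $\nabla v^\ve_j$ yields $\int\!\!\int w^\ve_i\cdot\varphi\to\sum_j\int\!\!\int\big(\mathbb{S}_{ij}(v)\varphi\big)\cdot\nabla v_j=\int\!\!\int w_i\cdot\varphi$, hence $\zeta_i=w_i$ and, the limit being independent of the subsequence, the full sequence converges: $w^\ve_i\rightharpoonup w_i$ in $L^2((0,T)\times\Om)^d$. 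Finally, \eqref{eq:pp6} follows from $\|w_i\|_{L^2}\le\liminf_\ve\|w^\ve_i\|_{L^2}$, the weak lower semicontinuity of the norm. The main obstacle, and the only point requiring genuine care, is the strong $L^2$-convergence of $\mathbb{S}^\ve_{ij}(v^\ve)$ against the test function when $D$ is unbounded and $\nabla v^\ve_j$ only weakly convergent; the Egorov-type truncation together with the uniform bound on $\|\nabla v^\ve_j\|_{L^2}$ resolves it, but it is where the hypotheses $v^\ve\in\ol D$, the pointwise convergence, and the local uniform convergence $\mathbb{B}^\ve\to\mathbb{B}$ all get used simultaneously.
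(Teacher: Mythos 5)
Your overall strategy is the same as the paper's: extract a weakly convergent subsequence, identify the weak limit via a strong--weak pairing after a truncation that makes the matrix factor bounded, and conclude by weak lower semicontinuity of the $L^2$-norm. The paper achieves the truncation by inserting smooth cutoffs $\xi_K\big(\sum_\ell|\mathbb{S}^\ve_{i\ell}(v^\ve)|\big)$ that clip the matrix entries directly, whereas you propose an Egorov set. Both mechanisms can work, but the Egorov route is not only more cumbersome — as you have written it, it has a genuine gap at the final step.

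The problem is in how you propose to pass to the limit in $\int\!\!\int w^\ve_i\cdot\varphi$ after the Egorov splitting $K=E_\delta\cup(K\setminus E_\delta)$. You claim that on the small exceptional set the uniform bound on $\|\nabla v^\ve_j\|_{L^2}$ resolves the issue, but this is not enough: the individual entries $\mathbb{S}^\ve_{ij}(v^\ve)$ can be arbitrarily large on $K\setminus E_\delta$ (this is precisely what Egorov failed to control there), so neither $\|\mathbb{S}^\ve_{ij}(v^\ve)\varphi\|_{L^2(K\setminus E_\delta)}$ nor the termwise product $\int_{K\setminus E_\delta}(\mathbb{S}^\ve_{ij}(v^\ve)\varphi)\cdot\nabla v^\ve_j$ is controlled by $\|\nabla v^\ve_j\|_{L^2}$ alone. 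What actually controls the tail is hypothesis~\eqref{eq:pp3}: only the \emph{full sum} $w^\ve_i=\sum_j\mathbb{S}^\ve_{ij}(v^\ve)\nabla v^\ve_j$ is known to be $L^2$-bounded, so the tail estimate must be done before splitting into the individual $j$-terms, via $\big|\int_{K\setminus E_\delta}w^\ve_i\cdot\varphi\big|\lesssim\|w^\ve_i\|_{L^2}\,\delta^{1/2}\|\varphi\|_\infty$. Even with this repair there is a second issue: in the limit you obtain $\big|\int\zeta_i\cdot\varphi-\int_{E_\delta}w_i\cdot\varphi\big|\lesssim\delta^{1/2}$, and to let $\delta\to0$ and conclude $\int\zeta_i\cdot\varphi=\int w_i\cdot\varphi$ you would need to know a priori that $w_i\cdot\varphi\in L^1$, which is not available (only $\zeta_i\in L^2$ is). The clean way out — and this is essentially what the paper's $\xi_K$-argument does — is to avoid the global integral altogether: on $E_\delta$ the strong--weak pairing gives $\chi_{E_\delta}w^\ve_i\rightharpoonup\chi_{E_\delta}w_i$ in $L^2$, while trivially $\chi_{E_\delta}w^\ve_i\rightharpoonup\chi_{E_\delta}\zeta_i$, hence $w_i=\zeta_i$ a.e.~on $E_\delta$, and then $\delta\to0$ yields $w_i=\zeta_i$ a.e.\ (in particular $w_i\in L^2$ a posteriori). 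If you adopt this a.e.-identification route you recover the paper's proof in slightly different clothing; without it, the passage $\int w^\ve_i\cdot\varphi\to\int w_i\cdot\varphi$ as stated is not justified.
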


Notice that, using the notation of Lemma~\ref{l:ED.liminf},
\begin{align}
	\|\mathbb{S}(v)\nabla v\|_{L^2((0,T)\times\Om)}^2	=	\|\mathbb{B}^\frac{1}{2}(v)\nabla v\|_{L^2((0,T)\times\Om)}^2
	=\int_0^T\!\int_\Om \sum_{i,j}\nabla v_{i}\cdot\mathbb{B}_{ij}(v)\nabla v_j\,\dd x\dd t.
\end{align}

\begin{proof}Fix $i\in \{1,\dots, N\}$.
	Thanks to~\eqref{eq:pp3} there exists $X\in L^2((0,T)\times\Om)^d$ and a subsequence $\ve\to0$ such that 
	\begin{align}\label{eq:pp4}
		\sum_{j=1}^N\mathbb{S}_{ij}^\ve(v^\ve)\nabla v^\ve_j \rightharpoonup X\quad\text{ in }L^2((0,T)\times\Om)^d.
	\end{align}
	It suffices to show that $X=\sum_{j=1}^N\mathbb{S}_{ij}(v)\nabla v_j$
	(which implies that~\eqref{eq:pp4} is independent of the subsequence $\ve\to0$).
	To this end, first note that $\lim_{\ve\to0}\mathbb{B}^\ve(v^\ve)=\mathbb{B}(v)$ pointwise a.e.~in $(0,T)\times\Om$ and hence, using the continuity of the square root operator on positiv semi-definite symmetric matrices,
	\begin{align}\label{eq:pp2}
		\lim_{\ve\to0}\mathbb{S}^\ve(v^\ve)=\mathbb{S}(v)\text{ pointwise a.e.~in }(0,T)\times\Om.
	\end{align}
	Let now $\xi_K\in C^\infty_c([0,\infty),[0,1])$, $K\in\mathbb{N}$, be a sequence of non-increasing functions satisfying $\xi_K(s)=1$ for $s\le K$ and $\xi_K(s)=0$ for $s\ge2K$.
	Combining the weak convergence~\eqref{eq:pp1} with~\eqref{eq:pp2}, we infer for every $K\in\mathbb{N}$
	\begin{align}
		\xi_K\big(\sum_{\ell=1}^N|\mathbb{S}_{i\ell}^\ve(v^\ve)|\big)\sum_{j=1}^N\mathbb{S}_{ij}^\ve(v^\ve)\nabla v^\ve_j
		\rightharpoonup
		\xi_K\big(\sum_{\ell=1}^N|\mathbb{S}_{i\ell}(v)|\big)\sum_{j=1}^N\mathbb{S}_{ij}(v)\nabla v_j
		\quad\text{ in }L^2((0,T)\times\Om)^d.
	\end{align}
	On the other hand, the convergence~\eqref{eq:pp4} combined with~\eqref{eq:pp2} implies that 
	\begin{align}
		\xi_K\big(\sum_{\ell=1}^N|\mathbb{S}_{i\ell}^\ve(v^\ve)|\big)\sum_{j=1}^N\mathbb{S}_{ij}^\ve(v^\ve)\nabla v^\ve_j
		\rightharpoonup
		\xi_K\big(\sum_{\ell=1}^N|\mathbb{S}_{i\ell}(v)|\big)X\quad\text{ in }L^2((0,T)\times\Om)^d.
	\end{align}
	Therefore,
	\begin{align}\label{eq:pp5}
		\xi_K\big(\sum_{\ell=1}^N|\mathbb{S}_{i\ell}(v)|\big)(\sum_{j=1}^N\mathbb{S}_{ij}(v)\nabla v_j-X)=0 \quad
		\text{ a.e.~in }(0,T)\times\Om.
	\end{align}
	Since $|v|<\infty$ a.e.~in $(0,T)\times\Om$, we also have 
	$\sum_{\ell=1}^N|\mathbb{S}_{i\ell}(v)|<\infty$ a.e.~in $(0,T)\times\Om$.
	Recalling that $\xi_K(s)=1$ for $s\le K$ and that $K\in\mathbb{N}$ was arbitrary, we infer from~\eqref{eq:pp5} that $X=\sum_{j=1}^N\mathbb{S}_{ij}(v)\nabla v_j$ a.e.~in $(0,T)\times\Om$, which proves the assertion. 
	
	The inequality~\eqref{eq:pp6} is a consequence of the weak lower semi-continuity of the norm in $L^2((0,T)\times\Om)$.

\end{proof}

Let us explain how Lemma~\ref{l:ED.liminf} can be applied to the model considered in~\cite[Theorem~1.8]{FHKM_2020}, which corresponds to submodel~\ref{it:M1.ren} of~\Mone{} in the present manuscript.
Note that this model admits the minimum principle in Lemma~\ref{l:u.positivity}. Thus, assuming as it is implicitly done in Theorem~\ref{thm:wkuniq}, that $\uin\ge\underline u>0$,
the renormalised solution $(u,c_1,\dots,c_n)$ in~\cite[Theorem~1.8]{FHKM_2020} satisfies 
$u\ge\underline u$. 
We may therefore take the following choices in Lemma~\ref{l:ED.liminf}:

We let $N:=1+n$ (and allow the slight inconsistency that we now start with index zero, that is $v=(v_0,\dots,v_n)$). We then let $D:=(\underline{u},\infty)\times(0,\infty)^n,$ so that 
\begin{align}
	\ol D=[\underline{u},\infty)\times[0,\infty)^n.
\end{align}
Denoting $v=(u,\sqrt{c_1},\dots,\sqrt{c_n})$, the matrix $\mathbb{B}$ is defined as follows:
\begin{align}
	&\mathbb{B}_{ij}(v)=4\sqrt{c_i}M_{ij}(u,c)\sqrt{c_j}\quad\text{ if }i,j\in\{1,\dots,n\},
	\\&\mathbb{B}_{i0}(v)=\mathbb{B}_{0i}(u,c)=2\sqrt{c_i}M_{i0}(u,c) \;\;(=2\sqrt{c_i}M_{0i}(u,c) )\quad\text{ if }i\in\{1,\dots,n\},
	\\&\mathbb{B}_{00}(v)=M_{00}(u,c),
\end{align}
that is, $\mathbb{B}(v)=\diag(1,2\sqrt{c_1},\dots,2\sqrt{c_n})M(u,c)\diag(1,2\sqrt{c_1},\dots,2\sqrt{c_n})$.
Here, $M$ denotes the positive semi-definite symmetric matrix 
\begin{align*}
	M(u,c):=D^2h(u,c)\,\mathbb{M}(u,c)\,D^2h(u,c).
\end{align*} 
The matrices $\mathbb{B}^\ve$ are defined analogously, but with $\mathbb{M}$ 
replaced by the matrix $\mathbb{M}^\ve$ defined at the beginning of Section~5 in~\cite{FHKM_2020}, where one can also find the definition of the approximate solutions  (in~\cite{FHKM_2020} they are introduced as $Z=Z^{\ve,\varrho}$ and  later on denoted by $Z^\ve=(u^\ve,c^\ve)$ after choosing $\varrho=\ve$). In Lemma~\ref{l:ED.liminf}, we choose $v^\ve_0=u^\ve$ and $v^\ve_i=\sqrt{c^\ve_i}$ for $i=1,\dots,n$, and letting $Z=(u,c)$ be the limit along a subsequence as obtained in~\cite[Lemma~6.1]{FHKM_2020},
we take $v=(u,\sqrt{c_1},\dots,\sqrt{c_n})$ in Lemma~\ref{l:ED.liminf}. We note that \cite[Lemma~6.1]{FHKM_2020} guarantees the convergence properties of $v^\ve$ to $v$ required to apply Lemma~\ref{l:ED.liminf}.

With the help of formula (1.10) in~\cite{FHKM_2020} one may compute $\mathbb{B}$ and $\mathbb{B}^\ve$ explicitly to verify that they are indeed continuous on $\ol D$. The locally uniform convergence of $\mathbb{B}^\ve$ to $\mathbb{B}$ in $\ol D$ follows easily from the construction.

Finally, notice that with this choice of $\mathbb{B}$ and with $v=(u,\sqrt{c_1},\dots,\sqrt{c_n})$, the quantity 
$ \sum_{i,j}\nabla v_{i}\cdot\mathbb{B}_{ij}(v)\nabla v_j$ corresponds to 
$\scrp(u,c)= \begin{pmatrix} \nabla u \\\nabla c\end{pmatrix}^T\!\!\cdot M(u,c)\begin{pmatrix}\nabla u \\ \nabla c\end{pmatrix}$, so that~\eqref{eq:pp6} allows to infer the $\liminf$-estimate for the diffusive entropy dissipation in the context of~\cite[Theorem~1.8]{FHKM_2020}.

\section*{Acknowledgements}
The author would like to thank an anonymous referee for several interesting comments and for suggesting to introduce the notion of dissipative renormalised solutions in Definition~\ref{def:diss.renorm}.

%\bibliographystyle{abbrv}
%\bibliography{bib-uniq}

\end{document}